\definecolor{myred}{HTML}{880000}
\definecolor{mygreen}{HTML}{008800}
\definecolor{myblue}{HTML}{000088}
\definecolor{linkblue}{HTML}{0000BB}
\newcommand{\E}{{\mathbf E}}
\newcommand{\Var}{\operatorname{Var}}
\renewcommand{\le}{\leqslant}
\renewcommand{\ge}{\geqslant}
\renewcommand{\ln}{\log}
\DeclareMathOperator{\tr}{Tr}
\newcommand{\var}{\Var}
\newcommand{\ind}[1]{\bm 1 ( #1 )}
\renewcommand{\top}{\mathsf{T}}
\newtheorem{proposition}{Proposition}
\newtheorem{theorem}{Theorem}
\newtheorem*{theorem*}{Theorem}
\newtheorem{lemma}{Lemma}
\theoremstyle{definition}
\newtheorem{definition}{Definition}
\newtheorem{example}{Example}
\theoremstyle{remark}
\newtheorem{remark}{Remark}
\title{Dimension-free Bounds for Sums of Independent Matrices and Simple Tensors via the Variational Principle}
\author{
  Nikita Zhivotovskiy\thanks{Department of Mathematics, ETH Z\"{u}rich, Switzerland, \href{mailto:nikita.zhivotovskii@math.ethz.ch}{nikita.zhivotovskii@math.ethz.ch}}
}
\begin{document}

\maketitle

\begin{abstract}
We consider the deviation inequalities for the sums of independent $d$ by $d$ random matrices, as well as rank one random tensors. Our focus is on the non-isotropic case and the bounds that do not depend explicitly on the dimension $d$, but rather on the effective rank.
In an elementary and unified manner, we show the following results: 
\begin{itemize}
\item A deviation bound for the sums of independent positive-semi-definite matrices. This result complements the dimension-free bound of Koltchinskii and Lounici [Bernoulli, 23(1): 110-133, 2017] on the sample covariance matrix in the sub-Gaussian case.
\item A new bound for truncated covariance matrices that is used to prove a dimension-free version of the bound of Adamczak, Litvak, Pajor and Tomczak-Jaegermann [Journal Of Amer. Math. Soc., 23(2), 535–561, 2010] on the sample covariance matrix in the log-concave case.
\item Dimension-free bounds for the operator norm of the sums of random tensors of rank one formed either by sub-Gaussian or by log-concave random vectors. This complements the result of Gu\'{e}don and Rudelson [Adv. in Math., 208: 798-823, 2007].
\item A non-isotropic version of the result of Alesker [Geom. Asp. of Funct. Anal., 77: 1--4, 1995] on the deviation of the norm of sub-exponential random vectors.
\item A dimension-free lower tail bound for sums of positive semi-definite matrices with heavy-tailed entries, sharpening the bound of Oliveira [Prob. Th. and Rel. Fields, 166: 1175–1194, 2016].
\end{itemize}
Our approach is based on the duality formula between entropy and moment generating functions.
In contrast to the known proofs of dimension-free bounds, we avoid Talagrand’s majorizing measure theorem, as well as generic chaining bounds for empirical processes. Some of our tools were pioneered by O. Catoni and co-authors in the context of robust statistical estimation. 
\end{abstract}

\section{Introduction and main results}

We study the non-asymptotic bounds for the sums of some independent random matrices as well as a closely related question of estimating the largest and smallest singular values of random matrices with independent rows. Assume that we are given a random $n$ by $d$ matrix $A$ such that all of its rows $A^{\top}_1, \ldots, A^{\top}_n$ are isotropic independent sub-Gaussian vectors (see the formal definitions below). We are interested in providing the upper and lower bounds on its singular values 
\[
s_{1}(A), s_2(A), \ldots, s_d(A).
\]
The question of upper bounding the largest singular values and lower bounding the smallest singular value is known to be essentially equivalent (see \cite[Chapter 4]{Vershynin2016HDP}) to providing an upper bound on the operator norm of the difference between the sample covariance matrix formed by the rows $A_i$ and the identity matrix. That is, one is interested in providing a high probability, non-asymptotic bound on
\begin{equation}
\label{eq:covestim}
\left\|\frac{1}{n}\sum_{i = 1}^nA_{i}A_{i}^{\top} - I_d\right\|.
\end{equation}
Here and in what follows $\left\|\cdot\right\|$ stands for the operator norm of the matrix and for the Euclidean norm of the vector respectively.
The latter question is also central in mathematical statistics, where one is interested in estimating the underlying covariance structure using the sample covariance matrix. One of the usual assumptions made when analyzing \eqref{eq:covestim} is that the rows $A^{\top}_i$ are isotropic and zero mean; that is, $\E A_i = 0$ and $\E A_{i}A_{i}^{\top} = I_d$, where in what follows $I_d$ stands for the $d$ by $d$ identity matrix. The non-isotropic case can usually be reduced to the isotropic using a linear transformation. However, the problem is that in this case the bound on \eqref{eq:covestim} will depend on the dimension $d$, whereas in many cases one expects that a dimension-free deviation bound is possible. The search for  dimension-free bounds for sums of independent random matrices is motivated mainly by applications in statistics and data science, where it is usually assumed that the data lives on a low-dimensional manifold. Before providing our first result, we recall that for a random variable $Y$ and $\alpha \in [1, 2]$, its $\psi_{\alpha}$ norm is defined as follows
\[
\|Y\|_{\psi_{\alpha}} = \inf\{c > 0: \E\exp(|Y|^\alpha/c^{\alpha}) \le 2\}.
\]
Using the standard convention, we say that $\|\cdot\|_{\psi_{2}}$ is the sub-Gaussian norm and $\|\cdot\|_{\psi_1}$ is the sub-exponential norm. We say that $X$ is a sub-Gaussian random vector in $\mathbb{R}^d$ if $\sup_{u \in S^{d - 1}}\|\langle u , X\rangle\|_{\psi_2}$ is finite. A zero mean random vector $X$ is isotropic, if $\E XX^{\top} = I_d$. Here and in what follows, $S^{d - 1}$ denotes the corresponding unit sphere and $\langle \cdot , \cdot\rangle$ is the standard inner product in $\mathbb{R}^d$. One of the central quantities appearing in this paper is the effective rank. 
\begin{definition}
For a positive semi-definite matrix $\Sigma$ define its \emph{effective rank} as
\[
\mathbf{r}(\Sigma) = \frac{\tr(\Sigma)}{\|\Sigma\|}.
\]
\end{definition}
The effective rank is always smaller than the matrix rank of $\Sigma$ and, in particular, smaller than its dimensions. We also have $\mathbf{r}(I_d) = d$.
Our first result is a general upper bound for sums of independent positive semi-definite $d$ by $d$ matrices satisfying the sub-exponential norm equivalence assumption. This generalizes the question of upper bounding \eqref{eq:covestim}, since we neither assume that the matrix is of rank one nor that the covariance matrix is identity.
\begin{theorem}[A general rank version of Theorem 9 in \cite{koltchinskii2017operators}]
\label{thm:maintheorem}
Assume that $M_1, \ldots M_n$ are independent copies of a $d$ by $d$ positive semi-definite symmetric random matrix $M$ with mean $\E M = \Sigma$. Let $M$ satisfy for some $\kappa \ge 1$,
\begin{equation}
\label{eq:psionellone}
\|x^{\top} M x\|_{\psi_1} \le \kappa^2\ x^{\top}\Sigma x,
\end{equation}
for all $x \in \mathbb{R}^d$.
Then, for any $t > 0$, with probability at least $1 - \exp(-t)$, it holds that
\begin{equation}
\label{eq:generalsamplecovariance}
\left\|\frac{1}{n}\sum\limits_{i = 1}^nM_i - \Sigma\right\| \le 20\kappa^2\|\Sigma\|\sqrt{\frac{4\mathbf{r}(\Sigma) + t}{n}},
\end{equation}
whenever $n \ge 4\mathbf{r}(\Sigma)+t$.
\end{theorem}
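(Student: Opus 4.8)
The plan is to run a variational (PAC-Bayesian) argument: replace the supremum defining the operator norm by a Gaussian smoothing, and use the duality between relative entropy and log-moment-generating functions to reduce everything to a single scalar Laplace-transform estimate, which is then controlled by the sub-exponential norm assumption \eqref{eq:psionellone}.

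Write $S=\frac1n\sum_{i=1}^n M_i-\Sigma$; since $S$ is symmetric, $\|S\|=\sup_{u\in S^{d-1}}|u^\top S u|$. Fix a scale $\beta>0$, take the ``prior'' $\mu=\mathcal N(0,\beta^{-1}I_d)$ and, for each unit vector $u$, the ``posterior'' $\nu_u=\mathcal N(u,\beta^{-1}I_d)$, so that $\kl(\nu_u\,\|\,\mu)=\beta/2$. The Gibbs variational principle (Donsker--Varadhan), $\int\varphi\,d\nu\le\kl(\nu\,\|\,\mu)+\log\int e^{\varphi}\,d\mu$, applied with $\varphi(\theta)=\lambda\,\theta^\top S\theta$ (and with $-\lambda$) and with $\nu=\nu_u$, gives, simultaneously over all $u\in S^{d-1}$,
\[
\lambda\Big(u^\top S u+\tfrac1\beta\tr S\Big)\le\tfrac\beta2+\log\!\int e^{\lambda\theta^\top S\theta}\,d\mu(\theta),
\]
since $\int\theta^\top S\theta\,d\nu_u=u^\top S u+\tfrac1\beta\tr S$. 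The spurious trace term is harmless: applying the same inequality with $\nu=\mu$ bounds $\tfrac1\beta|\tr S|$ by the same right-hand side (with no $\beta/2$). Thus everything reduces to a high-probability upper bound on $\log\int e^{\lambda\theta^\top S\theta}\,d\mu$, the log-Laplace transform of a quadratic form in a Gaussian.

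To bound this, Markov's inequality reduces the task to bounding $\E\int e^{\lambda\theta^\top S\theta}\,d\mu=\int\E\,e^{\lambda\theta^\top S\theta}\,d\mu$ (Fubini). For fixed $\theta$, $\theta^\top S\theta=\frac1n\sum_i(\theta^\top M_i\theta-\theta^\top\Sigma\theta)$ is an average of \iid centered variables of $\psi_1$-norm at most $c\kappa^2\theta^\top\Sigma\theta$ by \eqref{eq:psionellone}, so a Bernstein-type bound for sums of sub-exponentials gives $\E\,e^{\lambda\theta^\top S\theta}\le\exp\!\big(C\lambda^2\kappa^4(\theta^\top\Sigma\theta)^2/n\big)$ whenever $\lambda\kappa^2\theta^\top\Sigma\theta\lesssim n$. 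Integrating over $\theta\sim\mu$ is where the effective rank appears: $\theta^\top\Sigma\theta$ is a Gaussian chaos with mean $\tr\Sigma/\beta$ concentrating around it, so choosing the smoothing scale $\beta\asymp\mathbf{r}(\Sigma)$ makes $\theta^\top\Sigma\theta\asymp\|\Sigma\|$ typically and $\int\exp(C\lambda^2\kappa^4(\theta^\top\Sigma\theta)^2/n)\,d\mu\approx\exp(C\lambda^2\kappa^4\|\Sigma\|^2/n)$. Combining, with probability $\ge1-e^{-t}$ one gets $\|S\|\lesssim(\mathbf{r}(\Sigma)+t)/\lambda+C\lambda\kappa^4\|\Sigma\|^2/n$; optimizing over $\lambda\asymp\kappa^{-2}\|\Sigma\|^{-1}\sqrt{n(\mathbf{r}(\Sigma)+t)}$ yields $\|S\|\lesssim\kappa^2\|\Sigma\|\sqrt{(\mathbf{r}(\Sigma)+t)/n}$, and careful bookkeeping of constants gives the factor $20$ and the $4\mathbf{r}(\Sigma)$; the hypothesis $n\ge4\mathbf{r}(\Sigma)+t$ is exactly what keeps this $\lambda$ inside the range where the Bernstein step is valid for the bulk of $\mu$.

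The main obstacle is this last integration. Under a $\psi_1$ (rather than $\psi_2$) assumption, $\E\,e^{\lambda\theta^\top S\theta}$ is finite only for $\theta$ with $\theta^\top\Sigma\theta\lesssim n/(\lambda\kappa^2)$ --- on the Gaussian tail of $\mu$ it is literally infinite --- and even on the good region the bound $\exp(C\lambda^2\kappa^4(\theta^\top\Sigma\theta)^2/n)$ is a quartic exponential, not integrable against $\mu$. This must be fixed by a truncation of Catoni type: replace each $\theta^\top M_i\theta$ by a soft-truncated version --- one can use a one-sided truncation since $M_i\succeq0$ forces $\theta^\top M_i\theta\ge0$ --- which costs essentially nothing because $\theta^\top M_i\theta$ has sub-exponential tails at scale $\kappa^2\theta^\top\Sigma\theta$, together with a crude a priori bound on $\|\frac1n\sum M_i\|$ (\eg via its trace) to absorb the exponentially small Gaussian tail where the truncation bites. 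Making this device lossless is the technical heart of the argument; the remainder is bookkeeping with the two parameters $\beta$ and $\lambda$.
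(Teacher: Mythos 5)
You correctly identify the variational (PAC--Bayesian) framework and, more importantly, you correctly diagnose the central obstacle: under a $\psi_1$-type assumption the conditional MGF $\E e^{\lambda\theta^\top S\theta}$ is infinite on the Gaussian tail of the prior, so the naive Fubini/Markov step breaks down. However, the fix you propose diverges from the paper's and leaves a genuine gap.

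The paper's fix is to truncate the \emph{posterior}, not the quadratic form. Concretely, $\rho_{u,v}$ is taken to be a product of Gaussians \emph{restricted to balls} of radius $r=\sqrt{2\beta^{-1}\tr\Sigma}$ around $u$ and $v$ (equation \eqref{eq:density}). Then $\|\theta\|,\|\nu\|$ are bounded almost surely under the posterior, so Lemma~\ref{lem:psionenorm} applies conditionally with a uniform constraint on $\lambda$, and no truncation of the function is needed. The radius is chosen so the retained Gaussian mass is at least $1/2$, hence the truncation costs only a $\log 2$ in the KL divergence, which stays $O(\beta)=O(\mathbf r(\Sigma))$. Because nothing is discarded, the argument is lossless and yields the explicit constant $20$. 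You instead propose to keep the unbounded Gaussian posterior and apply a Catoni-type soft truncation to each $\theta^\top M_i\theta$. That is indeed what the paper does in Lemma~\ref{lem:truncationlemma} for Theorems~\ref{thm:tensortheorem}--\ref{thm:logconcavetheorem}, but there the information lost by truncation has to be recovered by the decoupling--chaining argument (Lemma~\ref{lem:pickypart}), with correspondingly more complicated tail estimates. Your suggestion of a ``crude a priori bound on $\|\tfrac1n\sum M_i\|$ via its trace'' to absorb the bad region yields $\tr\Sigma=\|\Sigma\|\mathbf r(\Sigma)$, which is a factor $\sqrt{n\mathbf r(\Sigma)}$ larger than the target rate; absorbing this needs the bad event to have probability exponentially small in $n\mathbf r(\Sigma)$, which is not established and is certainly not free under the stated hypotheses. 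You flag this step as ``the technical heart of the argument,'' and it is: the missing idea is that truncating the posterior (rather than the integrand) removes the obstacle at essentially no cost.

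Two smaller points. First, the paper linearizes by working with the bilinear form $\theta^\top\Sigma^{-1/2}M\Sigma^{-1/2}\nu$ with $(\theta,\nu)$ drawn from a product posterior, so that $\E_{\rho_{u,v}}\theta^\top(\cdot)\nu=u^\top(\cdot)v$ exactly; this cleanly avoids the spurious $\beta^{-1}\tr S$ term that you have to patch by applying Donsker--Varadhan again at $\nu=\mu$ (a valid but constant-lossy workaround). Second, your prior $\mathcal N(0,\beta^{-1}I_d)$ is a cosmetic reparameterization of the paper's $\mathcal N(0,\beta^{-1}\Sigma)$ composed with $\Sigma^{-1/2}$, so that part matches.
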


\begin{remark}
In the theorem above, we presented explicit constants. We place them to emphasize that, in contrast with existing dimension-free bounds, these constants are easy to obtain with the approach we follow. At the same time, little effort was made to get their optimal values. 
\end{remark}
\begin{remark}
In Section \ref{sec:lowertail} we show that the same dimension-free bound holds under a weaker assumption (allowing heavy-tailed distributions), namely $\sqrt{\E(x^{\top} M x)^2} \le \kappa^2\ x^{\top}\Sigma x$, but only for the lower tails of \eqref{eq:generalsamplecovariance}. This complements several known dimension-dependent lower tail bounds.
\end{remark}
The norm equivalence assumption \eqref{eq:psionellone} is quite standard in the literature. As a matter of fact, Theorem \ref{thm:maintheorem} recovers one of the central results in high-dimensional statistics, as the following example shows.
\begin{example}[The sample covariance matrix in the sub-Gaussian case \cite{koltchinskii2017operators}]
\label{ex:samplecovariance}
 The most natural application of Theorem \ref{thm:maintheorem} is when $M = XX^{\top}$ and $X$ is zero mean \emph{sub-Gaussian random vector} with a covariance matrix $\Sigma$. That is, there is $\kappa \ge 1$ such that for any $y \in \mathbb{R}^d$, it holds that
 \begin{equation}
 \label{eq:subgauss}
 \|\langle y, X\rangle\|_{\psi_2} \le \kappa\sqrt{y^{\top}\Sigma y}.
 \end{equation}
Using this line, for any $x \in \mathbb{R}^d$ we have
$\|x^{\top}XX^{\top}x\|_{\psi_1} = \|x^{\top}X\|_{\psi_2}^2 \le \kappa^2 x^{\top}\Sigma x,$
which is sufficient for Theorem \ref{thm:maintheorem}. This gives that, with probability at least $1 - \exp(-t)$, provided that $n \ge 4\mathbf{r}(\Sigma)+t$, it holds that
\begin{equation}
\label{eq:samplecovariance}
\left\|\frac{1}{n}\sum\limits_{i = 1}^nX_iX_i^{\top} - \Sigma\right\| = \sup\limits_{y \in S^{d - 1}}\left|\frac{1}{n}\sum\limits_{i = 1}^n \langle X_i, y\rangle^2 - \E \langle X, y\rangle^2 \right| \le 20\kappa^2\|\Sigma\|\sqrt{\frac{4\mathbf{r}(\Sigma) + t}{n}},
\end{equation}
where we used that for a symmetric $d$ by $d$ matrix $A$ it holds that $\|A\| = \sup\nolimits_{y \in S^{d - 1}} y^{\top}Ay$.
\end{example}
Recently, much attention has been paid to the dimension-free bound for the sample covariance matrix formed by sub-Gaussian random vectors. Although the dimension-dependent version of \eqref{eq:samplecovariance} (that is, where $\mathbf{r(\Sigma)}$ is replaced by $d$) follows from a simple discretization argument, the known approaches to obtaining the dimension-free bounds are quite technical and deserve a separate discussion:
\begin{itemize}
\item The bound of Theorem \ref{thm:maintheorem} is a general rank version\footnote{For the sake of technical simplicity, we work with finite dimensional vectors, whereas the results in \cite{koltchinskii2017operators} are formulated for a general Hilbert space.} of the result of Koltchinskii and Lounici \cite[Theorem 9]{koltchinskii2017operators}. Their proof is based on deep probabilistic results: the generic chaining tail bounds for quadratic processes and Talagrand's majorizing measures theorem. In particular, this makes it difficult to provide any explicit constants using their approach. 
Some generalizations of the result on sample covariance matrices to positive semi-definite matrices are also considered in \cite{youssef2013estimating}.
\item Using the matrix deviation inequality of Liaw, Mehrabian, Plan, and Vershynin \cite{liaw2017simple}, Vershynin \cite{Vershynin2016HDP} gives an alternative proof of the bound of Koltchinskii and Lounici, but with the term $\kappa^4$ instead of $\kappa^2$ in \eqref{eq:samplecovariance}. The dependence on $\kappa$ in \cite{liaw2017simple} has been recently improved in \cite{jeong2020sub}. However, this improved (and optimal for some problems) result only leads to $\kappa^2\log \kappa$ term in \eqref{eq:samplecovariance}.
\item Van Handel \cite{van2017structured} gives an in-expectation version of \eqref{eq:samplecovariance} in the special case where $X$ is a Gaussian random vector. Despite not using Talagrand's majorizing measures theorem, their analysis is based on a Gaussian comparison theorem and does not cover the sub-Gaussian case. Note that in the Gaussian case, the in-expectation bound for the sample covariance matrix can be converted into an optimal high probability bound using one of the special concentration inequalities provided in \cite{koltchinskii2017operators, adamczak2015note, klochkov2020uniform}.
\end{itemize}

Our approach, based on the variational inequality and described in detail in Section \ref{sec:pacbayes}, bypasses several technical steps appearing in the literature. Speaking informally, we use a smoothed version of the $\varepsilon$-net argument that allows to properly capture the complexity of elliptic indexing sets without resorting to generic chaining. This extension will be key to our multilinear results, where the above mentioned tools are hard to apply.

Note that even though Example \ref{ex:samplecovariance} is sharp in the rank one case (see the lower bound in \cite{koltchinskii2017operators}), the result of Theorem \ref{thm:maintheorem}, due to its generality, can be suboptimal in other cases. For example, let $A$ be a diagonal random matrix such that its diagonal elements are the same copy of the absolute value of a standard Gaussian random variable. In this case, Theorem \ref{thm:maintheorem} scales as $
\sqrt{\frac{d + t}{n}},
$
whereas the correct order is
$
\sqrt{\frac{t}{n}}.$ We also remark that, at least in the rank one case, the bound of Theorem \ref{thm:maintheorem} is out of the scope of the so-called matrix concentration inequalities, since they provide additional logarithmic factors and suboptimal tails (see some related bounds in \cite{tropp2012user, Vershynin2016HDP, klochkov2020uniform, lopes2019bootstrapping}). We additionally refer to the recent work \cite{bandeira2021matrix}, where the in-expectation analog of \eqref{eq:samplecovariance} is derived modulo some additional lower-order additive terms.

Motivated by the recent interest in random tensors \cite{vershynin2020concentration, gotze2021concentration, sambale2020some, bubeck2020law, even2021concentration}, we show how our arguments can be extended to provide a multilinear extension of Theorem \ref{thm:maintheorem}. That is, we are considering sums of independent random tensors of order higher than one and want to prove a bound similar to \eqref{eq:samplecovariance}. Let us introduce this setup. Consider the simple (rank one) random symmetric tensor 
\[
X^{\otimes s} = \underbrace{X \otimes \ldots \otimes X}_{s\; \textrm{times}},
\]
where $X$ is a zero mean sub-Gaussian vector in $\mathbb{R}^d$ and $X^{\otimes s}_1, \ldots, X_n^{\otimes s}$ are its independent copies. We are interested in studying 
\begin{equation}
\label{eq:learninglpballs}
\left\|\sum\limits_{i = 1}^{n}(X_i^{\otimes s} - \E X_i^{\otimes s})\right\| = \sup\limits_{v \in S^{d - 1}}\left|\sum\limits_{i = 1}^n(\langle X_i, v\rangle^s - \E\langle X, v\rangle^s)\right|,
\end{equation}
where $\|\cdot\|$ stands for the operator norm of the symmetric $s$-linear form. Here we used that for symmetric forms the expression is maximized by a single vector $v \in S^{d - 1}$ (see e.g., \cite[Section 2.3]{nemirovski2004interior}). 

The question of upper bounding \eqref{eq:learninglpballs} (with $\langle X_i, v\rangle^s$ usually replaced by the absolute value $|\langle X_i, v\rangle|^s$ in the right-hand side and non-integer values of $s$ are allowed) is well studied \cite{giannopoulos2000concentration, guedon2007lp, mendelson2008weakly, adamczak2010quantitative, vershynin2011approximating, mendelson2021approximating}. The results are usually of the following form: Assuming that $X_1, \ldots, X_n$ are i.i.d. copies of the isotropic vector satisfying certain regularity assumptions, one is interested in defining the smallest sample size $n$ such that, with high probability,
\[
\sup\limits_{v \in S^{d - 1}}\left|\frac{1}{n}\sum\limits_{i = 1}^n |\langle X_i, v\rangle|^s - \E|\langle X, v\rangle|^s\right|\le \varepsilon. 
\]
The general form of the assumption (see \cite{guedon2007lp, vershynin2011approximating} and, in particular, \cite[Theorem 4.2]{adamczak2010quantitative}) required to achieve this precision for some regular families of distributions is 
\begin{equation}
\label{eq:conditiononn}
n \ge C_sd^{s/2}\varepsilon^{-2}\log^{2 + s}(2\varepsilon^{-2}),
\end{equation}
where $C_s$ depends only on $s$. Although the condition $n \ge C_sd^{s/2}$ is known to be optimal when $\varepsilon$ is a constant \cite{vershynin2011approximating}, the dependence on $\varepsilon$ is either suboptimal or not explicit in the existing results. In fact, a recent result of Mendelson \cite{mendelson2021approximating} suggests that using a specific robust estimation procedure, it is possible to approximate the moments of the marginals for any $s \ge 1$ with $\varepsilon$ scaling as $\sqrt{\frac{d}{n}\log \frac{n}{d}}$. At the same time,  the inequality \eqref{eq:conditiononn} becomes vacuous in this regime whenever $s > 2$.  Before we proceed, recall the following definition. 

\begin{definition}
The measure $\nu$ in $\mathbb{R}^{d}$ is log-concave, if for any measurable subsets $A, B \in \mathbb{R}^d$ and any $t \in [0, 1]$,
\[
\nu(t A + (1 - t)B) \ge \nu(A)^{t}\nu(B)^{1 - t},
\]
whenever the set $t A + (1 - t)B = \{t a + (1 - t)b : a \in A, b \in B\}$ is measurable. 
\end{definition}

Our next result shows that provided that the sample size $n$ is large enough, one can approximate the $s$-th integer moment of the marginals using 
their empirical counterparts with $\varepsilon$ scaling as $\|\Sigma\|^{s/2}\sqrt{\frac{\mathbf{r}(\Sigma)}{n}}$. This is the best possible approximation rate when $s = 2$ and $X$ is a multivariate Gaussian random vector. Moreover, because of \eqref{eq:conditiononn} this approximation rate was not previously achieved even in the isotropic case.
\begin{theorem}
\label{thm:tensortheorem}
Let $s \ge 2$ be an integer.  Assume that $X_1, \ldots , X_n$ are independent copies of a zero mean vector $X$ that is either sub-Gaussian \eqref{eq:subgauss} or log-concave.  There exist $c_s> 0$ that depends only on $s$ and an absolute constant $c > 0$ such that the following holds. Assume that 
\begin{equation}
\label{eq:assumptiononn}
n \ge c_{s}(\mathbf{r}(\Sigma))^{s - 1}.
\end{equation}
Then, with probability at least $1 - cn\exp(-\sqrt{\mathbf{r}(\Sigma)})$, it holds that
\[
\sup\limits_{v \in S^{d - 1}}\left|\frac{1}{n}\sum\limits_{i = 1}^n\langle X_i, v\rangle^s - \E\langle X, v\rangle^s\right| \le C\|\Sigma\|^{s/2}\sqrt{\frac{\mathbf{r}(\Sigma) }{n}},
\]
where $C = C(s, \kappa)$ in the sub-Gaussian case and $C = C(s)$ in the log-concave case.

Moreover, in the sub-Gaussian case if $n \ge c_{s}(\mathbf{r}(\Sigma))^{s + 1}$, then the same bound holds with probability at least $1 - cn\exp(-\mathbf{r}(\Sigma))$.
\end{theorem}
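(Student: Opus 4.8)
The plan is to reduce the supremum over the sphere to a variational/PAC-Bayesian bound, exactly as in the proof of Theorem \ref{thm:maintheorem}, but now the indexing object is the degree-$s$ tensor power of a Gaussian-type smoothing. Concretely, I would fix a scale parameter $\beta > 0$ and, for each $v \in S^{d-1}$, consider the perturbed vector $v + g/\sqrt{\beta}$ with $g \sim \gaussdist(0, \Sigma)$ (or an isotropic Gaussian, tuned to the geometry); the posterior is the law of this perturbation and the prior is $\gaussdist(0, \Sigma/\beta)$. The key algebraic step is to expand $\langle X_i, v + g/\sqrt\beta\rangle^s$ by the binomial/multinomial theorem and integrate out $g$: the dominant term is $\langle X_i, v\rangle^s$, and the remaining terms are controlled by lower moments $\langle X_i, v\rangle^{s-j} \cdot (\text{Gaussian moments of } \langle X_i, g\rangle)$. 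This is where the effective rank enters — $\E_g \langle X_i, g\rangle^2 = X_i^\top \Sigma X_i / \beta$ has expectation $\tr(\Sigma^2)/\beta \le \|\Sigma\| \tr(\Sigma)/\beta$ — so choosing $\beta$ proportional to a suitable power of $\mathbf{r}(\Sigma)$ makes the smoothing error match the target rate $\|\Sigma\|^{s/2}\sqrt{\mathbf{r}(\Sigma)/n}$.

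After the smoothing, the duality formula between entropy and log-moment-generating functions (the variational inequality of Section \ref{sec:pacbayes}) turns the uniform control of $\frac1n\sum_i \langle X_i, v\rangle^s - \E\langle X,v\rangle^s$ into: (i) a KL term $\kl(\text{posterior}, \text{prior})$, which is $O(\|v\|^2 \beta) = O(\beta)$ uniformly over the sphere, plus (ii) the expected MGF of the smoothed summand under the prior. For (ii) I would need a bound of the form $\log \E \exp\big(\lambda(\langle X, w\rangle^s - \E\langle X,w\rangle^s)\big) \le C\lambda^2 (w^\top \Sigma w)^s$ for $\lambda$ in a suitable range, and then integrate over $w \sim$ prior. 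Here the sub-Gaussian case uses that $\langle X, w\rangle$ is sub-Gaussian with parameter $\kappa^2 w^\top \Sigma w$, so $\langle X,w\rangle^s$ is $\psi_{2/s}$; the log-concave case uses Borell's lemma / the fact that marginals of log-concave vectors have comparable moments (all $L^p$ norms of $\langle X,w\rangle$ are within a factor $O(p)$ of each other), which again yields a $\psi_{2/s}$-type tail. Because $s \ge 2$ these are heavier than sub-exponential, so the MGF only exists for $\lambda$ of order $n \cdot (\text{something})^{-1}$ rather than all $\lambda$; this forces the truncated-MGF / Markov-at-finite-$\lambda$ approach and is responsible for the $\exp(-\sqrt{\mathbf{r}(\Sigma)})$ (resp. $\exp(-\mathbf{r}(\Sigma)})$ in the better regime) probability, rather than a clean exponential-in-$t$ tail, and for the polynomial blow-up $\mathbf{r}(\Sigma)^{s-1}$ in the sample-size requirement. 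The union over the $n$ summands (for a truncation-type argument applied coordinatewise in $i$) produces the factor $cn$ in front of the probability.

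The main obstacle — and the place where most of the work goes — is controlling the expected MGF of the smoothed summand under the Gaussian prior uniformly, i.e. bounding $\E_{w \sim \text{prior}} \E_X \exp(\lambda(\langle X,w\rangle^s - \E\langle X,w\rangle^s))$. One cannot just use the worst-case $w$ on the sphere because $w$ has fluctuating norm $\approx \sqrt{\tr(\Sigma)/\beta}$; instead one integrates the tail $\exp(\lambda t^s)$ against the product of the $\psi_{2/s}$ tail of $\langle X,w\rangle$ and the Gaussian concentration of $w^\top\Sigma w$ around $\tr(\Sigma^2)/\beta$. Getting the constants and the range of $\lambda$ right here, and verifying that the chosen $\beta$ simultaneously kills the smoothing bias, keeps $\kl$ small, and keeps the MGF finite, is the delicate balancing act. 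A secondary technical point is the truncation needed to handle the $s$-th moment with only a second-moment-type input in places: one truncates $\langle X_i, w\rangle^s$ at a level $\sim (w^\top\Sigma w)^{s/2}\sqrt{n/\mathbf{r}(\Sigma)}$, bounds the contribution of the truncated-off part in expectation, and absorbs it — this is exactly the source of the second, stronger claim under $n \ge c_s \mathbf{r}(\Sigma)^{s+1}$, where a cruder truncation level already suffices and the tail improves to $\exp(-\mathbf{r}(\Sigma))$. The log-concave case runs in parallel once Borell's lemma replaces the sub-Gaussian assumption; no new ideas, only different constants $c_s, C(s)$.
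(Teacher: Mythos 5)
Your overall architecture (Gaussian smoothing of the index $v$, PAC--Bayes duality, a truncation step) points in the right direction, but two essential mechanisms are missing, and without them the argument does not close. First, the step ``$\log \E \exp\bigl(\lambda(\langle X, w\rangle^s - \E\langle X,w\rangle^s)\bigr) \le C\lambda^2 (w^\top\Sigma w)^s$ for $\lambda$ in a suitable range'' is vacuous for $s>2$: a $\psi_{2/s}$ random variable with $2/s<1$ has an infinite moment generating function for \emph{every} $\lambda\neq 0$, so there is no range of $\lambda$ to work in. The paper's resolution is not a restricted-MGF argument but Catoni's bounded influence function: one replaces $\lambda\langle v,X_i\rangle^s$ by $\psi(\lambda\langle v,X_i\rangle^s)$ with $\psi(x)\le\log(1+x+x^2)$, so that $\E\exp(\psi(\cdot))\le 1+\E(\cdot)+\E(\cdot)^2$ and only $2s$-th moments of linear marginals are needed (Lemmas \ref{lem:almostconvex} and \ref{lem:truncationlemma}); an auxiliary convexity correction (the $\min\{1,Z^2/6\}$ terms) is needed to push $\psi$ through the posterior expectation. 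Relatedly, the paper's posterior is a \emph{product} of $s$ independent Gaussians centered at $v$, so $\E_\rho\prod_k\langle X,\theta_k\rangle=\langle X,v\rangle^s$ exactly and no binomial bias terms arise; your single smoothed vector $v+g/\sqrt\beta$ produces bias terms such as $\binom{s}{2}\langle X_i,v\rangle^{s-2}X_i^\top\Sigma X_i/\beta$ whose uniform-in-$v$ control is itself a nontrivial empirical process.

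Second, the truncated-off part cannot be ``bounded in expectation and absorbed.'' The quantity $\sup_{v}\frac1n\sum_i|\langle v,X_i\rangle|^s\ind{\lambda|\langle v,X_i\rangle|^s>1}$ is a supremum over $v$ of a sum whose active index set depends on $v$; controlling it is exactly where the paper invokes the decoupling--chaining bound of Adamczak--Litvak--Pajor--Tomczak-Jaegermann and Talagrand (Lemma \ref{lem:pickypart}), combined with high-probability bounds on $\max_i\|X_i\|$ (via Lemma \ref{lem:concentrationofthenorm} in the sub-Gaussian case and Paouris' inequality in the log-concave case). This is where the factor $cn$ in the failure probability, the $\exp(-\sqrt{\mathbf{r}(\Sigma)})$ tail, and the condition $n\ge c_s(\mathbf{r}(\Sigma))^{s-1}$ actually come from. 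Your explanation of the second claim is also off: under $n\ge c_s(\mathbf{r}(\Sigma))^{s+1}$ the point is that $\lambda\max_i\|X_i\|^s\le 1$ with probability $1-n\exp(-\mathbf{r}(\Sigma))$, so the truncation is never active and the chaining step is bypassed entirely, not that ``a cruder truncation level suffices.''
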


To simplify our proofs, we focus only on the tensor case; in particular, we consider the integer values of $s$. In Theorem \ref{thm:tensortheorem} we require that either $cn\exp(-\mathbf{r}(\Sigma)) < 1$ or $cn\exp(-\sqrt{\mathbf{r}(\Sigma)}) < 1$. These assumptions can be dropped by slightly inflating our upper bound (see also \cite[Remark 4.3]{adamczak2010quantitative}).
It is likely that in the sub-Gaussian case one can extend our arguments, namely, the decoupling-chaining argument discussed below, so that the assertion holds with probability $1 - cn\exp(-\mathbf{r}(\Sigma))$ whenever $n \ge c_s(\mathbf{r}(\Sigma))^{s - 1}$. Indeed, we know by \eqref{eq:samplecovariance} that this is the case at least when $s = 2$. We preferred a shorter proof instead of a more accurate estimate of the tail.

In the log-concave case when $s = 2$, Theorem \ref{thm:tensortheorem} complements the renowned result of Adamczak, Litvak, Pajor and Tomczak-Jaegermann  \cite{adamczak2010quantitative}. The main advantage of our result is the explicit dependence on the effective rank, similar to the sample covariance bound of Koltchinskii and Lounici in the Gaussian case \cite{koltchinskii2017operators}. Our next result sharpens the tail estimate in this specific case and coincides with the best known bound in the isotropic case. 

\begin{theorem}
\label{thm:logconcavetheorem}
Assume that $X_1, \ldots X_n$ are independent copies of a zero-mean, log-concave random vector $X$ with covariance $\Sigma$. There are absolute constants $c_1, c_2, c_3 > 0$ such that the following holds. We have, with probability at least $1 - c_1\exp(-(\mathbf{r}(\Sigma)n)^{1/4}) - 2\exp(-\mathbf{r}(\Sigma))$,
\[
\left\|\frac{1}{n}\sum\limits_{i = 1}^nX_iX_i^{\top} - \Sigma\right\| \le c_2\|\Sigma\|\sqrt{\frac{\mathbf{r}(\Sigma)}{n}},
\]
whenever $n \ge c_3\mathbf{r}(\Sigma)$.
\end{theorem}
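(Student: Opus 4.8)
\emph{Proof plan.} Write $\mathbf r=\mathbf r(\Sigma)$ and $\varepsilon=\|\Sigma\|\sqrt{\mathbf r/n}$, and recall $\tr\Sigma=\mathbf r\,\|\Sigma\|$. Theorem~\ref{thm:tensortheorem} (with $s=2$) already gives $\|\frac1n\sum_iX_iX_i^{\top}-\Sigma\|\le C\varepsilon$, but only with probability $1-cn\exp(-\sqrt{\mathbf r})$; the aim here is to sharpen this to $1-c_1\exp(-(\mathbf r n)^{1/4})-2\exp(-\mathbf r)$. The strategy is to revisit the truncation step behind Theorem~\ref{thm:tensortheorem} and (i) enlarge the truncation radius so that the event ``some sample has an atypically large norm'' is already negligible at the rate $\exp(-(\mathbf r n)^{1/4})$, and (ii) split the resulting \emph{bounded} problem so that the variational estimate of Section~\ref{sec:pacbayes} can be pushed to confidence $\exp(-\mathbf r)$.

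\emph{Reduction to a bounded functional.} Put $\mathsf R^{2}=C_0\|\Sigma\|\sqrt{\mathbf r n}$ and let $\mathcal E=\{\|X_i\|\le\mathsf R\text{ for all }i\}$. One-dimensional marginals of a log-concave vector are $\psi_1$, and $\P(\|X\|\ge t\sqrt{\tr\Sigma})\le\exp(-c\,t\sqrt{\mathbf r})$ for $t\ge C$; since $\mathsf R=\rho\sqrt{\tr\Sigma}$ with $\rho\asymp(n/\mathbf r)^{1/4}$ and $n\ge c_3\mathbf r$ makes $\rho\ge C$, this yields $\P(\mathcal E^{c})\le n\exp(-c(\mathbf r n)^{1/4})\le c_1\exp(-(\mathbf r n)^{1/4})$ (enlarging $C_0$ if needed to absorb the $\log n$), the first term in the tail. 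On $\mathcal E$ the sample covariance coincides with its truncation, so it suffices to control $\sup_{v\in S^{d-1}}\big|\frac1n\sum_i\langle X_i,v\rangle^{2}\ind{\|X_i\|\le\mathsf R}-v^{\top}\Sigma v\big|$, a functional now bounded by $\mathsf R^{2}$.

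\emph{The bulk.} Cut \emph{each} coordinate at the much smaller level $\mathsf r^{2}=C_1\|\Sigma\|\sqrt{n/\mathbf r}$: write $\langle X_i,v\rangle^{2}=g_i(v)+h_i(v)$ with $g_i(v)=\langle X_i,v\rangle^{2}\wedge\mathsf r^{2}$ and $h_i(v)=(\langle X_i,v\rangle^{2}-\mathsf r^{2})_{+}$, so that $\frac1n\sum_i\langle X_i,v\rangle^{2}-v^{\top}\Sigma v=\big(\frac1n\sum_i g_i(v)-\E g_1(v)\big)+\big(\frac1n\sum_i h_i(v)-\E h_1(v)\big)$. For fixed $v$ the $g_i(v)$ lie in $[0,\mathsf r^{2}]$, satisfy $\E g_1(v)^{2}\le\E\langle X,v\rangle^{4}\le c\,(v^{\top}\Sigma v)^{2}$, and — because $\langle X,v\rangle^{2}$ is $\psi_{1/2}$ with parameter $\lesssim v^{\top}\Sigma v\le\|\Sigma\|$ — their centered log-moment generating function is still $O(\lambda^{2}\|\Sigma\|^{2})$ as long as $\lambda\lesssim(\|\Sigma\|^{1/2}\mathsf r)^{-1}$. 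With $\mathsf r^{2}\asymp\|\Sigma\|\sqrt{n/\mathbf r}$ this admissible range of $\lambda$ still contains $\lambda\asymp\|\Sigma\|^{-1}\sqrt{\mathbf r/n}$, which is precisely the value at which the variational/PAC-Bayes bound of Section~\ref{sec:pacbayes}, using a Gaussian prior whose relative-entropy term is $O(\mathbf r)$, equals $C\varepsilon$; hence $\sup_v|\frac1n\sum_i g_i(v)-\E g_1(v)|\le C\varepsilon$ with probability $1-2\exp(-\mathbf r)$. The truncation bias is harmless: $\sup_v|\E g_1(v)-v^{\top}\Sigma v|\le\sup_v\E h_1(v)=\sup_v\E(\langle X,v\rangle^{2}-\mathsf r^{2})_{+}\lesssim\|\Sigma\|(n/\mathbf r)^{1/4}e^{-(n/\mathbf r)^{1/4}}=o(\varepsilon)$ once $c_3$ is large.

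\emph{The overshoot, and the main difficulty.} What remains is $\sup_v\frac1n\sum_i h_i(v)$. On $\mathcal E$ each $h_i(v)$ is bounded only by $\mathsf R^{2}\asymp n\varepsilon$ — that is, by $n$ times the target precision — so no single Bernstein/variational estimate applied to the bounded variables $h_i$ can reach precision $\varepsilon$ with complexity $\mathbf r$: the range term and the entropy term are irreducibly in conflict (this is the same obstruction that already appears in the log-concave case of Theorem~\ref{thm:tensortheorem}). I would resolve it by peeling: decompose $h_i(v)=\sum_{j\ge1}\big(\langle X_i,v\rangle^{2}\wedge r_j^{2}-\langle X_i,v\rangle^{2}\wedge r_{j-1}^{2}\big)$ along the dyadic scale $r_j=2^{j}\mathsf r$, which reaches $\mathsf R$ after only $J\asymp\log\mathbf r$ steps (as $\mathsf R/\mathsf r\asymp\sqrt{\mathbf r}$). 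The $j$-th summand is nonzero only for samples with $\langle X_i,v\rangle^{2}>r_{j-1}^{2}$, of which, by the $\psi_1$ tail of the marginals, there are, uniformly in $v$, at most $\lesssim n\,\P(\langle X,v\rangle^{2}>r_{j-1}^{2})\lesssim n\,e^{-r_{j-1}/(c\|\Sigma\|^{1/2})}$; proving this count uniformly in $v$ with the effective rank $\mathbf r$ in place of the ambient dimension $d$ is, I expect, the technical heart of the argument, and is where one further application of the variational principle — now to the indicator process $v\mapsto\#\{i:\langle X_i,v\rangle^{2}>r_{j-1}^{2}\}$ — is needed. Granting it and bounding each shell term crudely by $r_j^{2}$, the $j$-th shell contributes at most $\asymp r_j^{2}\,e^{-r_{j-1}/(c\|\Sigma\|^{1/2})}\asymp 4^{j}\|\Sigma\|\sqrt{n/\mathbf r}\;e^{-2^{j-1}(n/\mathbf r)^{1/4}/c}$ to $\sup_v\frac1n\sum_i h_i(v)$; summing the rapidly convergent series over $j\le J$ gives a total $\lesssim\|\Sigma\|\sqrt{n/\mathbf r}\,e^{-(n/\mathbf r)^{1/4}/c}=o(\varepsilon)$ as soon as $n\ge c_3\mathbf r$ with $c_3$ large. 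Finally the lower tail $\sup_v\big(v^{\top}\Sigma v-\frac1n\sum_i\langle X_i,v\rangle^{2}\big)$ is treated the same way — and is in fact easier, being bounded by $\|\Sigma\|$ regardless — so intersecting all the events yields $\|\frac1n\sum_iX_iX_i^{\top}-\Sigma\|\le C\varepsilon$ with probability at least $1-c_1\exp(-(\mathbf r n)^{1/4})-2\exp(-\mathbf r)$, as claimed.
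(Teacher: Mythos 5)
Your decomposition of the problem is structurally the same as the paper's: a norm truncation at level $\asymp(\|\Sigma\|^2\mathbf{r}(\Sigma)n)^{1/4}$ via Paouris' inequality (giving the $\exp(-(\mathbf{r}(\Sigma)n)^{1/4})$ term), a ``bulk'' obtained by truncating $\langle X_i,v\rangle^2$ at level $\asymp\|\Sigma\|\sqrt{n/\mathbf{r}(\Sigma)}$ and controlled by the variational/PAC-Bayes argument with Gaussian posteriors of entropy $O(\mathbf{r}(\Sigma))$ (this is exactly Lemma \ref{lem:truncationlemma} with $s=2$, and your choice of truncation level matches the paper's $\lambda^{-1}$), and an ``overshoot'' term coming from the large coordinates. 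Up to that point your argument is sound and matches the paper.

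The gap is in the overshoot. You correctly identify that the whole difficulty is to bound, uniformly in $v$ and with probability $1-c\exp(-(\mathbf{r}(\Sigma)n)^{1/4})$, the number of indices $i$ with $\langle X_i,v\rangle^2>r_{j-1}^2$ --- but you then \emph{grant} this counting estimate rather than prove it, and the method you propose for it (``one further application of the variational principle to the indicator process'') is not viable as stated. The variational mechanism of Section \ref{sec:pacbayes} controls $\E_{\rho_v}$ of functions that are polynomial (hence linear in the posterior mean) or at least amenable to the $\psi$-smoothing of Lemma \ref{lem:almostconvex}; an indicator $\ind{\langle X_i,v\rangle^2>r^2}$ is recovered from its Gaussian smoothing $\E_{\rho_v}\ind{\langle X_i,\theta\rangle^2>r^2}$ only with losses that are fatal here, because the target is an expected count far below $1$ per shell while a single stray sample in a high shell already contributes $\asymp\varepsilon$ to your average (your own arithmetic: one sample in shell $J$ with $4^J\asymp\mathbf{r}(\Sigma)$ contributes $\varepsilon\cdot 4^J/\mathbf{r}(\Sigma)\asymp\varepsilon$). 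Replacing the expected count by a uniform high-probability count is precisely the hard combinatorial content of the Adamczak--Litvak--Pajor--Tomczak-Jaegermann theorem. The paper does not attempt it with the variational principle: it imports the decoupling--chaining bound (Lemma \ref{lem:pickypart}, i.e.\ Talagrand's Proposition 9.4.2) for $\sup_{v}\sup_{|I|\le k}\bigl(\sum_{i\in I}\langle v,X_i\rangle^2\bigr)^{1/2}$ and then runs a self-bounding argument: if $m$ coordinates exceed the truncation level then $m\lambda^{-1}\le\sup_{|I|\le m}\sum_{i\in I}\langle v,X_i\rangle^2$, which combined with the chaining bound yields an implicit inequality forcing $m\log^2(en/m)\lesssim\sqrt{\mathbf{r}(\Sigma)n}$, and the same lemma then bounds the total overshoot. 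To complete your proof you would either need to prove your uniform counting claim (essentially re-deriving the ALPT/Talagrand estimate) or substitute Lemma \ref{lem:pickypart} for your peeling step; as written, the overshoot bound is asserted, not established.
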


In both proofs we combine the variational inequality approach with the decoupling-chaining argument developed in \cite{adamczak2010quantitative, Talagrand2014}. We remark that the adaptation of the latter argument to the non-isotropic case is quite straightforward. Our analysis improves the analysis of the \emph{spread part} \cite[Section 9.4]{Talagrand2014} whose analysis in the isotropic case combines the $\varepsilon$-net argument and the Bernstein inequality. Finally, observe that since the Gaussian distribution is log-concave, the rate of convergence $\|\Sigma\|\sqrt{\frac{\mathbf{r}(\Sigma)}{n}}$ is the best possible due to the lower bound in \cite{koltchinskii2017operators}.

There is a version of Theorem \ref{thm:logconcavetheorem}, which follows from our proof with minimal changes. For the reader's convenience we also present an explicit tail bound.

\begin{proposition}
\label{cor:logconcavetheorem}
Assume that $X_1, \ldots X_n$ are independent copies of a zero-mean random vector with covariance $\Sigma$ such that for some $\kappa \ge 1$ and all $y \in \mathbb{R}^d$, it holds that
\[
\|\langle X, y \rangle\|_{\psi_1} \le \kappa\sqrt{y^{\top}\Sigma y} \quad \textrm{and}\quad \max\limits_{i}\|X_i\| \le \kappa\sqrt{\|\Sigma\|}(\mathbf{r}(\Sigma)n)^{\frac{1}{4}} \quad \textrm{almost surely}.
\]
There are absolute constants $c_1, c_2, c_3 > 0$ such that the following holds. For any $t \ge 0$ we have, with probability at least $1 - c_1\exp(-t)$, 
\[
\left\|\frac{1}{n}\sum\limits_{i = 1}^nX_iX_i^{\top} - \Sigma\right\| \le c_2\kappa^2\|\Sigma\|\left(\sqrt{\frac{\mathbf{r}(\Sigma) + t}{n}} + \frac{t^2}{n}\right),
\]
whenever $n \ge c_3(\mathbf{r}(\Sigma) + t)$.
\end{proposition}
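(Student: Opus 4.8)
\medskip
\noindent\textbf{Proof plan.}
The plan is to re-derive the estimate of Theorem~\ref{thm:logconcavetheorem} along the variational and decoupling--chaining route, replacing the two places where log-concavity is used by the hypotheses of the proposition and tracking constants and tails. Put $F(v)=\frac1n\sum_{i=1}^n\langle X_i,v\rangle^2-v^\top\Sigma v$; the quantity to bound is $\sup_{v\in S^{d-1}}|F(v)|$, and it is enough to control $\sup_vF(v)$ and $\sup_v(-F(v))$ separately. In the proof of Theorem~\ref{thm:logconcavetheorem} the vectors $X_i$ enter only through (a) the marginal equivalence $\|\langle X,y\rangle\|_{\psi_1}\le\kappa\sqrt{y^\top\Sigma y}$, which for a log-concave $X$ is Borell's lemma, and (b) the uniform bound $\max_i\|X_i\|\le\kappa\sqrt{\|\Sigma\|}(\mathbf r(\Sigma)n)^{1/4}$, which for a log-concave $X$ holds outside an event of probability $\lesssim n\exp(-c(\mathbf r(\Sigma)n)^{1/4})$ by concentration of the Euclidean norm of $X$ and a union bound over $i$; this is exactly the origin of the term $\exp(-(\mathbf r(\Sigma)n)^{1/4})$ there. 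Here (a) and (b) are assumed outright; in particular there is no truncation step, since $X_i=X_i\,\ind{\|X_i\|\le R}$ almost surely with $R=\kappa\sqrt{\|\Sigma\|}(\mathbf r(\Sigma)n)^{1/4}$, and the only probability that survives is that of the concentration step internal to the bound on $\sup_vF(v)$, which is what produces the clean factor $1-c_1e^{-t}$.

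For that internal bound I would run the decoupling--chaining scheme of \cite{adamczak2010quantitative,Talagrand2014}, adapted to the non-isotropic case by replacing $I_d$ with $\Sigma$ and $d$ with $\mathbf r(\Sigma)$. At each dyadic scale one splits $\langle X_i,v\rangle$ into a peaky part (above a scale-dependent threshold) and a spread part (below it). The peaky part is handled by a sparsity and counting estimate combined with an $\varepsilon$-net and a union bound, using (b) to limit the size and number of peaky contributions; matching the threshold profile to $R$ is what keeps the peaky contribution of order $\kappa^2\|\Sigma\|\sqrt{\mathbf r(\Sigma)/n}$. For the spread part --- whose analysis in the isotropic case combines an $\varepsilon$-net with Bernstein's inequality --- I would instead use the variational principle of Section~\ref{sec:pacbayes}: with prior $\mu=\gaussdist(0,\beta^{-1}I_d)$ and Gaussian posteriors $\rho_v=\gaussdist(v,\beta^{-1}I_d)$, the duality formula between entropy and moment generating functions gives, for the decoupled spread process $F^{\mathrm{sp}}$,
\[
\lambda\,\E_{w\sim\rho_v}F^{\mathrm{sp}}(w)\le\kl(\rho_v\|\mu)+\log\E_{w\sim\mu}e^{\lambda F^{\mathrm{sp}}(w)}=\frac{\beta}{2}+\log\E_{w\sim\mu}e^{\lambda F^{\mathrm{sp}}(w)}.
\]
The Gaussian smoothing relates $\E_{w\sim\rho_v}F^{\mathrm{sp}}(w)$ to $F^{\mathrm{sp}}(v)$ up to a correction of the type $\beta^{-1}\big(\frac1n\sum_i\|X_i\|^2-\tr\Sigma\big)$, bounded here by (b); and the decoupling makes the remaining log-MGF, after conditioning, a \emph{linear} --- hence sub-exponential, by (a) --- expression that factorizes over $i$. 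Optimizing $\beta$ (of order $\mathbf r(\Sigma)+t$, so that $\beta/2$ is absorbed into the effective-rank budget) and $\lambda$ produces the $\sqrt{(\mathbf r(\Sigma)+t)/n}$ term, while the heavy-tail regime of the Bernstein estimate for the spread part, where the truncation level is of order $\sqrt{\|\Sigma\|t}$, produces the additional $t^2/n$ term. Summing over the $O(\log n)$ scales and over the two tails, and using $n\ge c_3(\mathbf r(\Sigma)+t)$ to make $\sqrt{(\mathbf r(\Sigma)+t)/n}$ dominate the lower-order pieces and to terminate the chaining, yields the claim, with $c_1$ absorbing the union bound.

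I expect the main obstacle to be the simultaneous calibration in the variational step: $\beta$ must be large enough that $\beta/(2\lambda)\lesssim\kappa^2\|\Sigma\|\sqrt{\mathbf r(\Sigma)/n}$, yet small enough that the smoothing correction $\beta^{-1}\big(\frac1n\sum_i\|X_i\|^2-\tr\Sigma\big)$ --- estimated here via (b), where the log-concave proof would quote concentration --- and the peaky contribution also sit at that order. This is precisely the point at which the exponent $1/4$ in the hypothesis $\max_i\|X_i\|\le\kappa\sqrt{\|\Sigma\|}(\mathbf r(\Sigma)n)^{1/4}$ is forced, and it fixes the scale at which the peaky/spread threshold must be placed. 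The rest --- the counting bound for the peaky part, the per-$i$ sub-exponential moment generating function estimate, Markov's inequality to turn the inequality above into a tail bound, and the union over scales --- is bookkeeping essentially identical to the isotropic arguments of \cite{adamczak2010quantitative,Talagrand2014}.
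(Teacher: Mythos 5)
Your overall route is the paper's: decompose the quadratic process into the large summands $\langle v,X_i\rangle^2\,\ind{\lambda\langle v,X_i\rangle^2>1}$, handled by the decoupling--chaining bound of Lemma \ref{lem:pickypart} together with the assumed norm bound and a case analysis on the number $m$ of large coordinates, and a truncated part handled by the variational principle with Gaussian posteriors; assumptions (a) and (b) stand in for Borell's lemma and the Paouris-based norm control, which is exactly how the paper deduces the proposition from the proof of Theorem \ref{thm:logconcavetheorem}. However, one step of your calibration fails as written.

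The gap is the Gaussian-smoothing correction. You propose to bound $\beta^{-1}\bigl(\tfrac1n\sum_i\|X_i\|^2-\tr(\Sigma)\bigr)$ ``by (b)''. But (b) only gives $\tfrac1n\sum_i\|X_i\|^2\le\kappa^2\|\Sigma\|\sqrt{\mathbf{r}(\Sigma)n}$, so this route produces a correction of order $\beta^{-1}\kappa^2\|\Sigma\|\sqrt{\mathbf{r}(\Sigma)n}$; forcing it below the target $\kappa^2\|\Sigma\|\sqrt{(\mathbf{r}(\Sigma)+t)/n}$ requires $\beta\gtrsim n$, and then the entropy cost $\beta/(2\lambda n)\asymp 1/\lambda\asymp\kappa^2\|\Sigma\|\sqrt{n/(\mathbf{r}(\Sigma)+t)}$ explodes --- no choice of $\beta$ reconciles the two. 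This is precisely the pitfall the paper attributes (Section \ref{sec:detailedcomparison}) to using $\max_i\|X_i\|$ to control the moment generating function of the quadratic form, the source of the $n\ge d^5$-type conditions in earlier work. The paper's device is Lemma \ref{lem:almostconvex} combined with Lemma \ref{lem:truncationlemma}: the correction enters only through the bounded variables $\min\{1,\,c\lambda^2\beta^{-2}\|X_i\|^{4}\}$, whose \emph{expectation} is controlled via the moment bound $\E\|X\|^{4}\le(\mathrm{const}\cdot\kappa^2\tr(\Sigma))^2$ coming from assumption (a); Bernstein's inequality for these bounded variables then yields a contribution of order $\kappa^2\|\Sigma\|\sqrt{(\mathbf{r}(\Sigma)+t)/n}$ with $\beta\asymp\mathbf{r}(\Sigma)$. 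You need either that device or an explicit concentration bound for $\tfrac1n\sum_i\|X_i\|^2$ about $\tr(\Sigma)$; assumption (b) alone does not suffice, and the exponent $1/4$ in (b) is forced by the peaky part, not by this step. Relatedly, the $t^2/n$ term does not arise from a ``heavy-tail Bernstein regime of the spread part'' --- the truncated part contributes only $\sqrt{(\mathbf{r}(\Sigma)+t)/n}$ --- but from the additive $t$ inside Lemma \ref{lem:pickypart}, which gets squared in the bound for the large summands; extracting it correctly requires the case split $m\lessgtr(\mathbf{r}(\Sigma)+t)/\log^2\bigl(e^2n/(\mathbf{r}(\Sigma)+t)\bigr)$, which is the paper's second modification.
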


In Section \ref{sec:additionalresults} we provide two additional results: a bound on the deviation of the norm of a sub-exponential random vector and a lower tail version of Theorem \ref{thm:maintheorem}.
Finally, as a part of the proof of Theorem \ref{thm:tensortheorem}, we provide a simple proof of the bound by Hsu, Kakade and Zhang \cite{hsu2012tail} on the deviation of the norm of a sub-Gaussian random vector.

\section{An approach based on the variational equality}
\label{sec:pacbayes}
Our approach will be based on the following duality relation (see \cite[Corollary 4.14]{boucheron2013concentration}): for a probability space $(\Theta, \mu)$ and any measurable function $g$ such that $\E_\mu\exp(g(\theta)) < \infty$, it holds that
\begin{equation}
\label{eq:variationalformula}
\log\E_{\mu}\exp(g(\theta)) = \sup\limits_{\rho \ll \mu}(\E_{\rho}g(\theta) - \mathcal{KL}(\rho, \mu)),
\end{equation}
where the supremum is taken with respect to all measures absolutely continuous with respect to $\mu$ and
\[
\mathcal{KL}(\rho, \mu) = \int\log\left(\frac{d\rho}{d\mu}\right)d\rho
\]
denotes the Kullback-Leibler divergence between $\rho$ and $\mu$. The equality \eqref{eq:variationalformula} is used in the proof of the additivity of entropy \cite[Proposition 5.6]{Ledoux2001} and in the transportation method for proving concentration inequalities \cite[Chapter 8]{boucheron2013concentration}. A useful corollary of the variational equality is the following lemma (see e.g., \cite[Proposition 2.1]{catoni2017dimension} and discussions therein).
\begin{lemma}
\label{lem:pacbayes}
Assume that $X_i$ are i.i.d. random variables defined on some measurable space $\mathcal X$. Assume also that $\Theta$ (called the parameter space) is a subset of $\mathbb{R}^p$. Let $f: \mathcal X \times \Theta \to \mathbb{R}$ be such that $\E_X\exp(f(X, \theta)) < \infty$ almost surely. Let $\mu$ be a distribution (called prior) on $\Theta$ and let $\rho$ be any distribution (called posterior) on $\Theta$ such that $\rho \ll \mu$. Then, with probability at least $1 - \exp(-t)$, simultaneously for all such $\rho$ we have
\[
\frac{1}{n}\sum\limits_{i = 1}^n\E_{\rho}f(X_i, \theta) \le \E_{\rho}\log(\E_X\exp(f(X, \theta))) + \frac{\mathcal{KL}(\rho, \mu) + t}{n},
\]
where $\theta$ is distributed according to $\rho$. 
\end{lemma}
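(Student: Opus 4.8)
The plan is to deduce Lemma~\ref{lem:pacbayes} from the variational equality \eqref{eq:variationalformula} combined with a single application of Markov's (Chernoff's) inequality, applied to the empirical sum of the log-moment-generating-function-adjusted quantities. First I would define, for each fixed $\theta \in \Theta$, the centered random variable
\[
Z_i(\theta) = f(X_i, \theta) - \log \E_X \exp(f(X, \theta)),
\]
so that $\E_{X_i} \exp(Z_i(\theta)) = 1$ by construction. By independence of the $X_i$, the product $\prod_{i=1}^n \exp(Z_i(\theta))$ also has expectation $1$, and hence $\E_{X_1, \ldots, X_n} \exp\big(\sum_{i=1}^n Z_i(\theta)\big) = 1$ for every fixed $\theta$. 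The key move is then to integrate this over the prior $\mu$: by Fubini (everything is nonnegative), $\E_{X_1, \ldots, X_n} \E_{\theta \sim \mu} \exp\big(\sum_i Z_i(\theta)\big) = 1$.

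Next I would apply the variational equality \eqref{eq:variationalformula} with the base measure $\mu$ and the function $g(\theta) = \sum_{i=1}^n Z_i(\theta)$ (for the realized sample), which gives, pointwise in the sample,
\[
\log \E_{\theta \sim \mu} \exp\Big(\sum_{i=1}^n Z_i(\theta)\Big) = \sup_{\rho \ll \mu} \Big( \E_{\theta \sim \rho} \sum_{i=1}^n Z_i(\theta) - \mathcal{KL}(\rho, \mu) \Big).
\]
Now I would combine the two displays: Chernoff's inequality applied to the nonnegative random variable $W := \E_{\theta\sim\mu}\exp(\sum_i Z_i(\theta))$, which has $\E W = 1$, yields $\P(W \ge e^t) \le e^{-t}$, i.e. $\P(\log W \ge t) \le e^{-t}$. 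On the complementary event of probability at least $1 - e^{-t}$ we have $\log W \le t$, and therefore, by the variational formula, \emph{simultaneously for all} $\rho \ll \mu$,
\[
\E_{\theta \sim \rho} \sum_{i=1}^n Z_i(\theta) - \mathcal{KL}(\rho, \mu) \le t.
\]
Unpacking $Z_i(\theta)$ and dividing by $n$ gives exactly the claimed inequality
\[
\frac{1}{n} \sum_{i=1}^n \E_\rho f(X_i, \theta) \le \E_\rho \log\big(\E_X \exp(f(X,\theta))\big) + \frac{\mathcal{KL}(\rho, \mu) + t}{n}.
\]
The uniformity over $\rho$ is automatic because the high-probability event is the single event $\{\log W \le t\}$, which does not reference $\rho$ at all — the supremum over $\rho$ was already absorbed into $W$ via the variational equality.

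There is no real obstacle here; this is a short and standard PAC-Bayes argument. The only points needing a line of care are: (i) checking the integrability/measurability needed to apply Fubini and to make $W$ a genuine random variable (guaranteed by the hypothesis $\E_X \exp(f(X,\theta)) < \infty$ almost surely together with joint measurability of $f$, which one tacitly assumes); and (ii) noting that $\log W$ can in principle be $+\infty$ or that $\E_{\theta\sim\rho}\sum_i Z_i(\theta)$ could be $-\infty$ for some $\rho$, in which cases the inequality is trivially true, so one may restrict attention to $\rho$ for which the left side is finite. I would state the proof in two or three sentences along these lines, citing \eqref{eq:variationalformula} for the variational step and a one-line Chernoff bound for the tail.
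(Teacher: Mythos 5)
Your proof is correct and is essentially the paper's own argument: both rest on the observation that $\E_{\mu}\exp\bigl(\sum_i f(X_i,\theta) - n\log\E_X\exp f(X,\theta)\bigr)$ has expectation $1$ (by Fubini and independence), on the variational equality \eqref{eq:variationalformula} to turn its logarithm into a supremum over posteriors $\rho$, and on Markov's inequality for the tail. The only difference is cosmetic ordering: you build up from the fixed-$\theta$ moment identity to the supremum, while the paper starts from the supremum and unwinds it; the steps are the same.
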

\begin{proof}
We sketch the proof for the sake of completeness. Let $g: \mathcal X^n \times \Theta \to \mathbb{R}$ in \eqref{eq:variationalformula} be equal to $\sum\limits_{i = 1}^nf(X_i, \theta) - n\log\E_X \exp (f(X, \theta))$. Let $\E$ denote the expectation with respect to the i.i.d. sample $X_1, \ldots, X_n$. Using successively \eqref{eq:variationalformula}, Fubini's theorem and independence of $X_1, \ldots, X_n$, we have
\begin{align*}
&\E\exp\sup\limits_{\rho \ll \mu}\left(\E_{\rho}\sum_{i = 1}^nf(X_i, \theta) -  \E_{\rho} n \log \E_X\exp(f(X, \theta)) - \mathcal{KL}(\rho, \mu)\right)
\\
&\qquad = \E\E_{\mu}\exp\left(\sum_{i = 1}^nf(X_i, \theta) - n \log \E_X\exp(f(X, \theta)) \right) 
\\
&\qquad = \E_{\mu}(\E_X\exp\left(f(X, \theta) - \log \E_X\exp(f(X, \theta)) \right)^n = 1.
\end{align*}
By Markov's inequality for any random variable $Y$ the identity $\E\exp(Y) = 1$ implies that $Y < t$, with probability at least $1 - \exp(-t)$. The claim follows by taking
\[
Y = \sup\limits_{\rho \ll \mu}\left(\E_{\rho}\sum_{i = 1}^nf(X_i, \theta) -  \E_{\rho} n \log \E_X\exp(f(X, \theta)) - \mathcal{KL}(\rho, \mu)\right).
\]
\end{proof}

\begin{remark}
In Lemma \ref{lem:pacbayes} we assumed that $\E_X\exp(f(X, \theta)) < \infty$ for all $\theta \in \Theta$. However, this does not imply that $f(X, \theta)$ is integrable with respect to $\rho$. If it is not the case, one can conventionally take the cases where $\E_{\rho}f(X, \theta)$ is infinite into account, so that the inequality of Lemma \ref{lem:pacbayes} still holds.  For more details see \cite[Appendix A]{catoni2016pac}.
\end{remark}

Our analysis is inspired by the application of \eqref{eq:variationalformula} and Lemma \ref{lem:pacbayes} in the works of Catoni and co-authors \cite{audibert2011robust, audibert2010linear, catoni2012challenging, catoni2016pac, catoni2017dimension} on robust mean and covariance estimation as well as by the work of Oliveira \cite{oliveira2016lower} on the lower tails of sample covariance matrices under minimal assumptions. This approach is usually called the PAC-Bayesian method in the literature. In robust mean estimation, one is making minimal distributional assumptions (for example, by considering heavy-tailed distributions) aiming to estimate the mean of the random variable/vector/matrix using the estimators that necessarily differ from the sample mean (see \cite{catoni2012challenging, minsker2015geometric, lugosi2019sub, catoni2017dimension, mendelson2020robust, chinot2019robust, ostrovskii2019affine, klochkov2020robust, mendelson2021approximating} and the recent survey \cite{lugosi2019mean}). Our aim is somewhat different:  We work with sums of independent random matrices and multilinear forms. It is important to note that except the recent works of Catoni and Giulini \cite{giulini2018robust, catoni2017dimension}, statistical guarantees based on \eqref{eq:variationalformula} are dimension-dependent.  A detailed technical comparison with these papers is deferred to Section \ref{sec:detailedcomparison}.

\subsection{Motivating examples: matrices with isotropic sub-Gaussian rows and the Gaussian complexity of ellipsoids}
To motivate (and illustrate) the application of the variational equality \eqref{eq:variationalformula} in the context of high-dimensional probability, we first show how Lemma \ref{lem:pacbayes} can be used to recover the standard  bound on the largest and smallest singular values of the the $n$ by $d$ random matrix $A$ having independent, mean zero, isotropic ($\E A_iA_i^{\top} = I_d$) sub-Gaussian rows. In this case, \eqref{eq:subgauss} can be rewritten as 
\[
\sup_{y \in S^{d - 1}}\|\langle y, A_i\rangle\|_{\psi_2} \le \kappa.
\]
In view of \cite[Lemma 4.1.5]{Vershynin2016HDP}, it is enough to show the following statement.
\begin{proposition}(\cite[Theorem 4.6.1]{Vershynin2016HDP})
\label{prop:insteadofunionbound}
Let $A$ be an $n$ by $d$ random matrix whose rows $A_i^{\top}$ are independent, mean zero, sub-gaussian isotropic random vectors. We have for any $t \ge 0$, with probability at least $1 - \exp(-t)$,
\[
\left\|\frac{1}{n}\sum\limits_{i = 1}^nA_iA_i^{\top} - I_d\right\| \le 52\kappa^2\sqrt{\frac{d + t}{n}},
\]
whenever $n \ge d+ t$.
\end{proposition}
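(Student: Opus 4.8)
The plan is to invoke Lemma \ref{lem:pacbayes} with $f(x,\theta)=\pm\lambda\langle x,\theta\rangle^2$ on the parameter space $\Theta=\R^d$, using a \emph{Gaussian smoothing} of the unit sphere in place of the usual $\eps$-net. Write $\widehat\Sigma=\frac1n\sum_{i=1}^n A_iA_i^\top$; since $\widehat\Sigma-I_d$ is symmetric and $\E\langle A_i,y\rangle^2=1$ by isotropy, $\|\widehat\Sigma-I_d\|=\sup_{y\in S^{d-1}}\bigl|\frac1n\sum_i\langle A_i,y\rangle^2-1\bigr|$, so it is enough to bound the two one-sided suprema $\sup_y\bigl(\frac1n\sum_i\langle A_i,y\rangle^2-1\bigr)$ and $\sup_y\bigl(1-\frac1n\sum_i\langle A_i,y\rangle^2\bigr)$. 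For the first, I would fix $\lambda>0$ and take $f(x,\theta)=\lambda\langle x,\theta\rangle^2$, the prior $\mu=\gaussdist(0,\tfrac cd I_d)$ for a suitable absolute constant $c$, and, for each $y\in S^{d-1}$, the posterior $\rho_y=\gaussdist(y,\tfrac1d I_d)$ conditioned on the Euclidean ball $B(y,2)$. The conditioning (and truncating $\mu$ to $B(0,3)$ as well) keeps $\|\theta\|\le3$ on all supports, so that $\E_A\exp(\lambda\langle A,\theta\rangle^2)<\infty$ there for $\lambda\lesssim\kappa^{-2}$ and the sharp sub-exponential moment bound is available; truncating the prior can only decrease $\mathcal{KL}(\rho_y,\mu)$.

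Next I would estimate the three ingredients of Lemma \ref{lem:pacbayes}, all uniformly in $y\in S^{d-1}$. By rotational symmetry of $\rho_y$ about $y$, $\E_{\rho_y}(\theta-y)=0$ and $\E_{\rho_y}(\theta-y)(\theta-y)^\top=\sigma^2 I_d$ with $\sigma^2=\tfrac1d\E_{\rho_y}\|\theta-y\|^2$, where an elementary estimate gives $\sigma^2 d\in[1/2,2]$; hence $\E_{\rho_y}\langle A_i,\theta\rangle^2=\langle A_i,y\rangle^2+\sigma^2\|A_i\|^2$ and $\E_{\rho_y}\|\theta\|^2=1+\sigma^2 d$. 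For the log-moment term, for each fixed $\theta$ on the support $\langle A,\theta\rangle$ is centered, $\|\langle A,\theta\rangle\|_{\psi_2}\le\kappa\|\theta\|\le3\kappa$, and $\E_A\langle A,\theta\rangle^2=\|\theta\|^2$, so $\log\E_A\exp(\lambda\langle A,\theta\rangle^2)\le\lambda\|\theta\|^2+C\kappa^4\lambda^2$ for $\lambda\le c_0\kappa^{-2}$, and averaging over $\rho_y$ gives $\le\lambda(1+\sigma^2 d)+C\kappa^4\lambda^2$. Finally, a one-line Gaussian computation — in which $c$ is chosen so the $\frac d2\log(\beta d)$ term is offset by the term $-\frac1{2\alpha}\E_{\rho_y}\|\theta-y\|^2$ — bounds $\mathcal{KL}(\rho_y,\mu)$ by an absolute multiple of $d$.

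Feeding this into Lemma \ref{lem:pacbayes}, on one event of probability $\ge1-e^{-t}$ and simultaneously over all $y\in S^{d-1}$,
\[
\lambda\Bigl(\tfrac1n\sum_i\langle A_i,y\rangle^2+\sigma^2\tfrac1n\sum_i\|A_i\|^2\Bigr)\le\lambda(1+\sigma^2 d)+C\kappa^4\lambda^2+\frac{Cd+t}{n}.
\]
The crucial step is to move $\sigma^2\tfrac1n\sum_i\|A_i\|^2$ to the right, where it cancels the bias $\sigma^2 d$ coming from $\E_{\rho_y}\|\theta\|^2$; dividing by $\lambda$ then yields
\[
\tfrac1n\sum_i\langle A_i,y\rangle^2-1\le\sigma^2\Bigl(d-\tfrac1n\sum_i\|A_i\|^2\Bigr)+C\kappa^4\lambda+\frac{Cd+t}{\lambda n}.
\]
Thanks to $\sigma^2\asymp\tfrac1d$, the leftover $\sigma^2(d-\tfrac1n\sum_i\|A_i\|^2)$ is $\lesssim\kappa^2\sqrt{(d+t)/n}$ by Bernstein's inequality applied to $\sum_i(\|A_i\|^2-d)$ (on a further event of probability $\ge1-e^{-t}$), and choosing $\lambda\asymp\kappa^{-2}\sqrt{(d+t)/n}$ — admissible once $n\ge d+t$ — balances the last two terms at the same order. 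The lower-tail supremum is handled identically with $f(x,\theta)=-\lambda\langle x,\theta\rangle^2$, for which $\E_A\exp f<\infty$ automatically; a union bound over the two tails and the Bernstein event (with $t$ shifted by a constant) then gives $\|\widehat\Sigma-I_d\|\lesssim\kappa^2\sqrt{(d+t)/n}$, and tracking constants produces the stated $52\kappa^2$.

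I expect the main obstacle to be calibrating the smoothing scale. If the posterior variance is taken too small, $\mathcal{KL}(\rho_y,\mu)$ picks up exactly the net-style logarithmic (or $\sqrt n$) loss that one is trying to avoid; if it is taken too large, the smoothing bias of $\theta\mapsto\langle A,\theta\rangle^2$ dominates the quadratic form. The resolution — retaining the $\sigma^2\sum_i\|A_i\|^2$ term and cancelling it against $\E_{\rho_y}\|\theta\|^2$, which forces $\rho_y$ to be isotropic about its center and the prior/posterior second moments to match — is what permits the $n$-independent scale $\tfrac1d$ and hence $\mathcal{KL}=O(d)$. A secondary technical point is that one must use the form of the sub-exponential estimate whose first-order term is exactly $\lambda\E_A\langle A,\theta\rangle^2$: the crude bound $\log\E_A\exp(\lambda\langle A,\theta\rangle^2)\lesssim\lambda\kappa^2$ would inflate the target constant ``$1$'' by a multiplicative factor $\kappa^2$.
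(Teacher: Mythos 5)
Your argument is correct, but it follows a genuinely different route from the paper, and the difference is worth spelling out. The paper never encounters the bias term $\sigma^2\|A_i\|^2$: it parametrizes by a \emph{pair} $(\theta,\nu)\in\Theta=(1+\eps)B^d\times(1+\eps)B^d$, takes the prior to be the product of two uniform measures on $(1+\eps)B^d$, the posterior $\rho_{u,v}$ to be the product of uniform measures on $\eps$-balls around $u$ and $v$, and feeds the \emph{bilinear} form $\lambda\,\theta^{\top}A_iA_i^{\top}\nu$ into Lemma~\ref{lem:pacbayes}. Because $\theta$ and $\nu$ are independent under $\rho_{u,v}$, one gets $\E_{\rho_{u,v}}\theta^{\top}A_iA_i^{\top}\nu=u^{\top}A_iA_i^{\top}v$ exactly — no second-moment correction appears — and the KL divergence is the explicit $2d\log((1+\eps)/\eps)$, set to $d$ by choosing $\eps=(\sqrt e-1)^{-1}$. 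A single PAC-Bayes application then delivers the bound simultaneously for all $u,v\in S^{d-1}$; taking $u=v=y$ gives the upper tail and $u=y$, $v=-y$ the lower tail, so both sides come from one event and one value of $\lambda$.

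Your version works with the quadratic form $\lambda\langle A,\theta\rangle^2$ on $\Theta=\R^d$, with truncated-Gaussian prior and posterior, and the smoothing injects the bias $\sigma^2\|A_i\|^2$. Your resolution — cancel it against $\sigma^2 d$ in $\E_{\rho_y}\|\theta\|^2$ and control the residual $\sigma^2\bigl(d-\tfrac1n\sum_i\|A_i\|^2\bigr)$ by Bernstein, noting $\|\,\|A_i\|^2-d\,\|_{\psi_1}\lesssim d\kappa^2$ so that after multiplying by $\sigma^2\asymp 1/d$ the deviation is $\lesssim\kappa^2\sqrt{(d+t)/n}$ when $n\ge d+t$ — is sound, and the truncation of the prior to a ball (which you correctly observe only decreases the KL and keeps the MGF condition in Lemma~\ref{lem:pacbayes} legitimate) is a necessary detail many would miss. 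The price is an extra concentration step, two separate PAC-Bayes applications (one per sign of $\lambda$), and a further union bound, which also makes the explicit constant $52$ harder to reproduce exactly, though the rate $\kappa^2\sqrt{(d+t)/n}$ is the same. Incidentally, truncated Gaussian priors/posteriors are exactly what the paper deploys in the non-isotropic Theorem~\ref{thm:maintheorem}, and a scalar version of your $\E_{\rho_y}\|\theta\|^2$ computation appears in Lemma~\ref{lem:concentrationofthenorm}; so your toolkit matches the paper's — you simply applied it to the quadratic form where the paper opts for the bias-free bilinear lift.
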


The standard way of proving Proposition \ref{prop:insteadofunionbound} uses an $\varepsilon$-net argument combined with the Bernstein inequality in terms of the $\|\cdot\|_{\psi_1}$ norm and the union bound. We demonstrate that if the prior $\mu$ and the posterior $\rho$ are correctly chosen, then Lemma \ref{lem:pacbayes} recovers the same bound without directly exploiting a discretization argument. \begin{remark}
The condition $n \ge d + t$ does not appear in \cite[Theorem 4.6.1]{Vershynin2016HDP}. As a result, this bound contains an additional additive term scaling as $\frac{d +t}{n}$. In our regime when $n \ge d + t$, this term is naturally dominated by $\sqrt{\frac{d + t}{n}}$. In some sense, we only captured the sub-Gaussian regime in the deviation bound. This regime is arguably the most interesting when considering statistical estimation problems.
\end{remark}

Our analysis requires the following standard result. Since we need a version with an explicit constant, we reproduce these lines for the sake of completeness. 
\begin{lemma}
\label{lem:psionenorm}
Let $Y$ be a zero mean random variable. Then for any $\lambda$ such that $|\lambda| \le \frac{1}{2\|Y\|_{\psi_1}}$,
\[
\E\exp(\lambda Y) \le \exp(4\lambda^2\|Y\|_{\psi_1}^2).
\]
\end{lemma}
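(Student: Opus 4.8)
The plan is to expand the moment generating function as a power series and control the moments of $Y$ directly from the definition of the $\psi_1$ norm, with no need for any clever trick. Write $c = \|Y\|_{\psi_1}$; if $c = \infty$ the inequality is vacuous, so assume $c < \infty$. By definition $\E\exp(|Y|/c) \le 2$, and expanding the exponential inside the expectation (all summands nonnegative, so Tonelli applies) gives $\sum_{k \ge 0} \frac{\E|Y|^k}{k!\, c^k} \le 2$. Since every term of this series is nonnegative and bounded by the total sum, each term is at most $2$, which yields the moment bound $\E|Y|^k \le 2\, k!\, c^k$ for every $k \ge 0$. In particular $\E|Y| \le 2c < \infty$, so $\E Y$ is well defined and equal to zero by hypothesis.

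Next I would expand $\E\exp(\lambda Y)$. Whenever $|\lambda| c < 1$ one has $\sum_{k \ge 0} \frac{|\lambda|^k \E|Y|^k}{k!} \le 2\sum_{k \ge 0} (|\lambda| c)^k = \frac{2}{1 - |\lambda| c} < \infty$, so Fubini's theorem justifies the termwise identity $\E\exp(\lambda Y) = \sum_{k \ge 0} \frac{\lambda^k \E Y^k}{k!} = 1 + \sum_{k \ge 2} \frac{\lambda^k \E Y^k}{k!}$, where the linear term drops out because $\E Y = 0$. Bounding $|\E Y^k| \le \E|Y|^k \le 2\, k!\, c^k$ term by term, I get $\E\exp(\lambda Y) \le 1 + 2\sum_{k \ge 2} (|\lambda| c)^k = 1 + \frac{2(|\lambda| c)^2}{1 - |\lambda| c}$.

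Finally I would impose the constraint $|\lambda| \le \frac{1}{2c}$, that is $|\lambda| c \le \tfrac12$, so that $1 - |\lambda| c \ge \tfrac12$ and hence $\frac{2(|\lambda| c)^2}{1 - |\lambda| c} \le 4\lambda^2 c^2$. Combining this with the elementary inequality $1 + x \le e^x$ gives $\E\exp(\lambda Y) \le 1 + 4\lambda^2 c^2 \le \exp(4\lambda^2 c^2) = \exp(4\lambda^2 \|Y\|_{\psi_1}^2)$, which is the claim.

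There is essentially no obstacle in this argument; the only points that deserve a line of care are the reduction to the case $c < \infty$, the extraction of the moment bound $\E|Y|^k \le 2\,k!\,c^k$ from the series (legitimate precisely because the summands are nonnegative), and the appeal to absolute convergence that licenses expanding the MGF termwise and discarding the first-order term using $\E Y = 0$. The constant $4$ is certainly not optimal, but it is all that is needed downstream.
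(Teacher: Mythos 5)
Your proof is correct and follows essentially the same route as the paper: establish the moment bound $\E|Y|^p \le 2\,p!\,\|Y\|_{\psi_1}^p$, Taylor-expand the moment generating function, drop the linear term using $\E Y = 0$, sum the geometric series, and finish with $1+x\le e^x$. The only (harmless) difference is that you extract the moment bound directly from the series expansion of $\E\exp(|Y|/c)\le 2$ term by term, whereas the paper goes through the exponential tail bound and integrates; both give the same estimate.
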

\begin{proof}
First, by Markov's inequality and any $t \ge 0$, it holds that
\[
\Pr(|Y| \ge t) = \Pr\left(\exp(|Y|/\|Y\|_{\psi_1}) \ge \exp(t/\|Y\|_{\psi_1})\right) \le 2\exp\left(-t/\|Y\|_{\psi_1}\right).
\]
In the following lines, we assume without loss of generality that $\|Y\|_{\psi_1} = 1$. We have for $p \ge 1$,
\begin{equation}
\label{eq:psione}
\E|Y|^p = \int\limits_{t = 0}^{\infty}\Pr(|Y|^p \ge t)dt = \int\limits_{t = 0}^{\infty}\Pr(|Y| \ge t)pt^{p-1}dt \le 2\int\limits_{t = 0}^{\infty}\exp(-t)pt^{p-1}dt = 2p\Gamma(p) = 2p!\;.
\end{equation}
Finally, when $|\lambda| \le 1/2$ by Taylor's expansion and since $\E Y = 0$, we have
\[
\E\exp(\lambda Y) = 1 + \sum\limits_{p = 2}^\infty\frac{\lambda^p\E|Y|^p}{p!} \le 1 + 2\E\sum\limits_{p = 2}^\infty\lambda^p = 1 + \frac{2\lambda^2}{1 - \lambda} \le 1 + 4\lambda^2 \le \exp(4\lambda^2).
\]
The claim follows.
\end{proof}

\begin{proof}(of Proposition \ref{prop:insteadofunionbound})
Fix $\varepsilon > 0$. Our aim is to choose $\mu$ and $\rho$. Let 
\[
\Theta = (1 + \varepsilon)B^d\times (1 + \varepsilon)B^d,
\]
where $B^d$ is a unit ball in $\mathbb{R}^d$. Choose $\mu$ to be a product of two uniform measures each defined on $(1 + \varepsilon)B^d$. For $u, v\in S^{d - 1}$ let $\rho_{u, v}$ be a product of uniform distributions on the balls $\{x \in \mathbb{R}^{d}:\|x - u\| \le \varepsilon\}$ and $\{x \in \mathbb{R}^{d}:\|x - v\| \le \varepsilon\}$. Observe that both balls belong to $(1 + \varepsilon)B^d$. Because of this, if $(\theta, \nu)$ is distributed according to $\rho_{u, v}$, we have $\E_{\rho_{u, v}}(\theta, \nu) = (u, v)$. By the additivity of $\mathcal{KL}$-divergence for product measures and the formula for the volume of the $d$-dimensional ball, we have
\[
\mathcal{KL}(\rho_{u, v}, \mu) = 2\int_{\|x - u\| \le \varepsilon}\log\left(\frac{(1 + \varepsilon)^d}{\varepsilon^d}\right)\frac{1}{\operatorname{vol}(\{x \in \mathbb{R}^{d}:\|x - v\| \le \varepsilon\})}dx = 2d\log\left(\frac{1 + \varepsilon}{\varepsilon}\right),
\]
where $\operatorname{vol}(S)$ denotes the volume of the set $S$. 
Fix $\lambda \in \mathbb{R}$ and consider the random variable $\lambda \theta^{\top}A_iA_i^{\top}\nu$, where $(\theta, \nu)$ is distributed according to $\rho_{u, v}$. We want to plug this random variable into Lemma \ref{lem:pacbayes}. Observe that conditionally on $(\theta, \nu)$, we have, using $\|YZ\|_{\psi_1} \le \|Y\|_{\psi_2}\|Z\|_{\psi_2}$, 
\[
\|\theta^{\top}A_iA_i^{\top}\nu\|_{\psi_1} \le \|\theta\|\|\nu\|\sup_{y \in S^{d - 1}}\|\langle y, A_i\rangle\|^2_{\psi_2} \le (1 + \varepsilon)^2\kappa^2,
\]
where the last inequality follows from the fact that $\theta, \nu \in (1 + \varepsilon)B^d$ almost surely.
Conditionally on $(\theta, \nu)$, combining the triangle and Jensen's inequalities, we have
\[
\|\theta^{\top}A_iA_i^{\top}\nu - \theta^{\top}\E A_iA_i^{\top}\nu\|_{\psi_1} \le  2\|\theta^{\top}A_iA_i^{\top}\nu\|_{\psi_1}.
\]
Further, since $\E A_iA_i^{\top} = I_d$ we have by Lemma \ref{lem:psionenorm}, conditionally on $(\theta, \nu)$,
\[
\E \exp(\lambda\theta^{\top}A_iA_i^{\top}\nu) \le \exp(\lambda\theta^{\top}\nu + 16\lambda^2(1 + \varepsilon)^4\kappa^4),
\]
whenever $\lambda \le \frac{1}{4(1 + \varepsilon)^2\kappa^2}$.
Therefore, since $\E_{\rho_{u, v}} \theta^{\top}A_iA_i^{\top}\nu = u^{\top}A_iA_i^{\top}v$ and $\E_{\rho_{u, v}} \theta^{\top}\nu = u^{\top}v$ Lemma \ref{lem:pacbayes} gives that simultaneously for all $u, v \in S^{d - 1}$, with probability at least $1 - \exp(-t)$,
\begin{equation}
\label{eq:singvalues}
\frac{1}{n}\sum\limits_{i = 1}^n \lambda u^{\top}A_iA_i^{\top}v - \lambda u^{\top}v \le 16\lambda^2\kappa^4(1 + \varepsilon)^4 + \frac{2d\log\left((1 + \varepsilon)/\varepsilon\right) + t}{n}.
\end{equation}
We choose $\varepsilon = \frac{1}{\sqrt{e} - 1}$ to guarantee that $2d\log\left((1 + \varepsilon)/\varepsilon\right) = d$.
Then, taking $\lambda = \frac{1}{4(1 + \varepsilon)^2\kappa^2}\sqrt{\frac{d + t}{n}}$, we require $n \ge d + t$. Simplifying \eqref{eq:singvalues} for this choice of parameters, we prove the claim.
\end{proof}

Another motivating fact is that \eqref{eq:variationalformula} correctly reflects the Gaussian complexity of the ellipsoid. It is well-known that for the ellipsoids, the Dudley integral argument does not give an optimal bound, while the generic chaining does (see \cite[Chapter 2.5]{Talagrand2014}). Although one can instead directly use the Cauchy-Schwarz inequality, it is easy to show that the variational equality \eqref{eq:variationalformula} captures the same bound. 

\begin{example}[The Gaussian complexity of ellipsoids via the variational equality]
\label{ex:ellipsoids}
Let $Z$ be a standard normal random vector. Let $\Sigma$ be a positive semi-definite $d$ by $d$ matrix. It hold that
\[
\E \sup\limits_{v \in \Sigma^{1/2}S^{d-1}} \langle Z, v \rangle \le \sqrt{\tr(\Sigma)}. 
\]
\end{example}

\begin{proof}
Set  $\Theta = \mathbb{R}^d$ and let $\beta > 0$.
Let the prior distribution $\mu$ be a multivariate Gaussian distribution with mean zero and covariance $\beta^{-1}\Sigma$. For $v \in \Sigma^{1/2} S^{d - 1}$ let the distribution $\rho_v$ be a multivariate Gaussian distribution with mean $v$ and covariance $\beta^{-1}\Sigma$. By the standard formula, we have 
\[
\mathcal{KL}(\rho_v, \mu) = \beta/2.
\]
Let $\theta$ be distributed according to $\rho_v$. By Jensen's inequality for any $\lambda > 0$,
\[
\E \sup\limits_{v \in \Sigma^{1/2}S^{d-1}} \langle Z, v \rangle = \E \sup\limits_{v \in \Sigma^{1/2}S^{d-1}}\E_{\rho_v}\langle Z, \theta \rangle \le \frac{1}{\lambda}\log\E \sup\limits_{v \in \Sigma^{1/2}S^{d-1}} \exp(\lambda\E_{\rho_v}\langle Z, \theta\rangle).
\]
By the line of the proof of Lemma \ref{lem:pacbayes}, we have
\[
\E\sup\limits_{v \in \Sigma^{1/2}S^{d - 1}}\exp\left(\E_{\rho_v}\lambda \langle Z, \theta \rangle -  \E_{\rho_v}\log \E\exp(\lambda \langle Z, \theta \rangle) - \beta/2\right) \le 1.
\]
Since $Z$ is a standard normal random vector, it holds that
\[
\E_{\rho_v}\log \E\exp(\lambda \langle Z, \theta \rangle) = \E_{\rho_v}\lambda^2\|\theta\|^2/2 = \lambda^2(\|v\|^2 + \beta^{-1}\tr(\Sigma))/2.
\]
Combining previous inequalities and simplifying, we have
\[
\E \sup\limits_{v \in \Sigma^{1/2}S^{d-1}} \langle Z, v \rangle \le \inf\limits_{\lambda, \beta > 0} \lambda(\|\Sigma\| + \beta^{-1}\tr(\Sigma))/2 + \beta/(2\lambda) = \sqrt{\tr(\Sigma)}.
\]
The claim follows.
\end{proof}

Observe that the proof explicitly uses a bound on the expected squared norm of a multivariate normal vector (not for the norm of $Z$ though), which is closer to a more \say{algebraic} approach based on the Cauchy-Schwarz inequality, whereas the generic chaining is a \say{geometric} approach; we refer to \cite[Chapter 2.5]{Talagrand2014} for a detailed discussion of the Gaussian complexity of ellipsoids.

\subsection{Proof of Theorem \ref{thm:maintheorem}}

In view of Proposition \ref{prop:insteadofunionbound} and  Example \ref{ex:ellipsoids}, a natural idea is to use the uniform distribution for $\rho$ and $\mu$ on ellipsoids induced by the structure of the matrix $\Sigma$. It appears that working with ellipsoids directly is quite tedious. To avoid these technical problems, we work with the non-isotropic truncated Gaussian distribution. Throughout the proof, we assume without loss of generality that $\Sigma$ is invertible. If it is not the case, the distribution of $M$ lives almost surely in a lower-dimensional subspace. We can project on this subspace and continue the proof without changes. Fix $\beta > 0$. Let 
\[
\Theta = \mathbb{R}^d\times \mathbb{R}^d,
\]
and choose the prior distribution $\mu$ on $\Theta$ as the product of two multivariate Gaussian distributions in $\mathbb{R}^d$ both with mean zero and covariance matrix $\beta^{-1}\Sigma$.
For $u, v \in \Sigma^{1/2}S^{d - 1}$ let the posterior distribution $\rho_{u, v}$ be defined as follows. For $r > 0$ consider the density function $f_u$ in $\mathbb{R}^d$ given by
\begin{equation}
\label{eq:density}
f_u(x) = \frac{1}{p(2\pi)^{d/2}\sqrt{\det(\beta^{-1}\Sigma)}}\exp\left(-\frac{(x - u)^{\top}\beta\Sigma^{-1}(x - u)}{2}\right)\ind{\|x - u\| \le r},
\end{equation}
where $p > 0$ is a normalization constant. That is, the distribution defined by $f_u$ is a multivariate normal distribution restricted to the ball $\{x \in \mathbb{R}^d: \|x - u\| \le r\}$. Our distribution $\rho_{u, v}$ on $\Theta$ is now defined as a product of two distributions given by $f_u$ and $f_v$ respectively. Observe that since $f_u$ is symmetric around $u$ (that is, for any $y \in \mathbb{R}^d$, we have $f_{u}(u + y) = f_{u}(u - y)$), we have for $(\theta, \nu)$ distributed according to $\rho_{u, v}$,
\begin{equation}
\label{eq:expectation}
\E_{\rho_{u, v}}(\theta, \nu) = (\E_{\rho_u}\theta, \E_{\rho_v}\nu) = (u, v),
\end{equation}
where $\rho_u$ and $\rho_v$ denote the marginals of $\rho_{u, v}$.

Let us now compute the Kullback-Leibler divergence between $\rho_{u, v}$ and $\mu$. Let $g$ denote the density function of a multivariate Gaussian distribution with mean zero and covariance $\beta^{-1}\Sigma$. By the additivity of the Kullback-Leibler divergence  for product measures, we have
\[
\mathcal{KL}(\rho_{u, v}, \mu) =  \int\log\left(\frac{f_u(x)}{g(x)}\right)f_u(x)dx+ \int\log\left(\frac{f_v(x)}{g(x)}\right)f_v(x)dx.
\]
Both terms are now analyzed similarly. For $\theta$ distributed according to $\rho_{u}$,
\begin{align*}
\int\log\left(\frac{f_u(x)}{g(x)}\right)f_u(x)dx &= \E_{\rho_{u}}\log\left(\frac{1}{p}\exp\left(\frac{-(\theta - u)^{\top}\beta\Sigma^{-1}(\theta - u) + \theta^{\top}\beta\Sigma^{-1}\theta}{2}\right)\right)
\\
&=\log\left(\frac{1}{p}\right) + \E_{\rho_{u}}\left(\frac{u^{\top}\beta\Sigma^{-1}\theta + \theta^{\top}\beta\Sigma^{-1}u - u^{\top}\beta\Sigma^{-1}u}{2}\right)
\\
&= \log\left(\frac{1}{p}\right) + \frac{u^{\top}\beta\Sigma^{-1}u}{2} = \log\left(\frac{1}{p}\right) + \frac{\beta}{2},
\end{align*}
where in the last line we used $u \in \Sigma^{1/2}S^{d - 1}$.
Let $Z$ be a random vector having a multivariate Gaussian distribution with mean zero and covariance $\beta^{-1}\Sigma$. By \eqref{eq:density} and using the translation $u \to 0$, we have 
$
p = \Pr(\|Z\| \le r).
$
By Markov's inequality we have $\Pr(\|Z\| > r) \le \E\|Z\|^2/r^2 = \beta^{-1}\tr(\Sigma)/r^2$. We choose 
\[
r = \sqrt{2\beta^{-1}\tr(\Sigma)}
\]
and get $p \ge 1/2$. Therefore, we have $\log\left(1/p\right) \le \log 2$. Finally, for this choice of $r$,\begin{equation}
\label{kl:boundeq}
\mathcal{KL}(\rho_{u, v}, \mu) \le 2\log 2 + \beta,
\end{equation}
For $\lambda \in \mathbb{R}$ we want to plug the function $\lambda \theta^{\top} \Sigma^{-1/2}M\Sigma^{-1/2}\nu$ into Lemma \ref{lem:pacbayes}, where $(\theta, \nu)$ is distributed according to $\rho_{u, v}$. By \eqref{eq:expectation} we have
\begin{equation}
\label{eq:emppart}
\frac{1}{n}\sum\limits_{i = 1}^n\E_{\rho_{u, v}} \lambda \theta^{\top} M_i\nu = \frac{1}{n}\sum\limits_{i = 1}^n\lambda u^{\top} \Sigma^{-1/2}M_i\Sigma^{-1/2}v.
\end{equation}
It is only left to compute
$
\E_{\rho_{u, v}}\log(\E\exp(\lambda\theta^{\top} \Sigma^{-1/2}M\Sigma^{-1/2}\nu)).
$
Conditionally on $(\theta, \nu)$, we have as in the proof of Proposition \ref{prop:insteadofunionbound}
\begin{align*}
&\|\theta^{\top} \Sigma^{-1/2}M\Sigma^{-1/2}\nu - \theta^{\top} \Sigma^{-1/2}\E M\Sigma^{-1/2}\nu\|_{\psi_1}
\\
&\quad\le 2\|\theta^{\top} \Sigma^{-1/2}M\Sigma^{-1/2}\nu\|_{\psi_1}
\\
&\quad\le \|\theta^{\top} \Sigma^{-1/2}M\Sigma^{-1/2}\theta\|_{\psi_1} + \|\nu^{\top} \Sigma^{-1/2}M\Sigma^{-1/2}\nu\|_{\psi_1}
\\
&\quad\le \kappa^2(\|\theta\|^2 + \|\nu\|^2),
\end{align*}
where both the $\|\cdot\|_{\psi_1}$ norm and the expectation are considered with respect to the distribution of $M$, and the second line uses the Cauchy-Schwarz inequality. Taking again the expectation with respect to $M$ only, we have by Lemma \ref{lem:psionenorm}
\[
\E\exp(\lambda\theta^{\top} \Sigma^{-1/2}M\Sigma^{-1/2}\nu) \le \exp(\lambda\theta^{\top}\Sigma^{-1/2}\E M\Sigma^{-1/2}\nu)\exp(4\lambda^2\kappa^4(\|\theta\|^2+ \|\nu\|^2)^2),
\]
provided that $|\lambda| \le \frac{1}{2\kappa^2(\|\theta\|^2 + \|\nu\|^2)}$. Observe that by our choice of $r$, we have almost surely 
\[
\max\{\|\theta\|^2, \|\nu\|^2\} \le (\sqrt{\|\Sigma\|} + r)^2 = (\sqrt{\|\Sigma\|} + \sqrt{2\beta^{-1}\tr(\Sigma)})^2.
\]
Let us choose $\beta = 2\mathbf{r}(\Sigma)$. Thus, by \eqref{kl:boundeq}, \eqref{eq:emppart} and Lemma \ref{lem:pacbayes}
we have for any fixed $\lambda$ such that $|\lambda| \le \frac{1}{16\kappa^2\|\Sigma\|}$ simultaneously for all $u, v \in S^{d - 1}$,
\[
\frac{1}{n}\sum\limits_{i = 1}^n\lambda u^{\top} Mv \le \lambda u^{\top} \Sigma v + 64\lambda^2\kappa^4\|\Sigma\|^2 + \frac{4\mathbf{r}(\Sigma) + t}{n},
\]
where we used $2\log 2 + 2\mathbf{r}(\Sigma) \le 4\mathbf{r}(\Sigma)$.
We choose $\lambda = \frac{1}{16\kappa^2\|\Sigma\|}\sqrt{\frac{4\mathbf{r}(\Sigma)+t}{n}}$ and finish the proof.
\qed

\subsection{Proofs of Theorem \ref{thm:tensortheorem} and Theorem \ref{thm:logconcavetheorem}}
\label{sec:tensors}

We first present some auxiliary results, then we prove Theorem \ref{thm:logconcavetheorem} and Theorem \ref{thm:tensortheorem}. The technique of the proof combines the analysis of Theorem \ref{thm:maintheorem} with a careful truncation argument. We also use the decoupling-chaining argument to control the large components in the sums. In the last part, we are mainly adapting the previously known techniques.

We need the following result, which is similar to the deviation inequality appearing in \cite{hsu2012tail}. As above, we provide a simple proof based on the variational equality \eqref{eq:variationalformula}.

\begin{lemma}
\label{lem:concentrationofthenorm}
Assume that $X$ is a zero mean $\kappa$-sub-Gaussian random vector \eqref{eq:subgauss}.
Then, with probability at least $1 - \exp(-t)$,
\begin{equation}
\label{eq:concentrationofthenorm}
\|X\|^2 \le 36\kappa^2\left(\tr(\Sigma)/2 + \sqrt{2t\tr(\Sigma)\|\Sigma\|} + t\|\Sigma\|\right).
\end{equation}
\end{lemma}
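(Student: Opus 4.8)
The plan is to reduce the bound on $\|X\|$ to a one-dimensional moment generating function estimate by linearising the Euclidean norm through a Gaussian integral --- the same device as in Example~\ref{ex:ellipsoids}, where it is realised as the left-hand side of the variational equality \eqref{eq:variationalformula} with a Gaussian prior. Concretely, for $s>0$ and $g\sim\gaussdist(0,I_d)$ independent of $X$ one has, conditionally on $X$, $\langle X,g\rangle\sim\gaussdist(0,\|X\|^2)$, hence
\[
\log\E_{g\sim\gaussdist(0,I_d)}\exp\bigl(\sqrt{2s}\,\langle X,g\rangle\bigr)=s\|X\|^2 .
\]
Taking the expectation over $X$ and applying Fubini's theorem gives $\E_X\exp(s\|X\|^2)=\E_g\E_X\exp(\langle\sqrt{2s}\,g,X\rangle)$, so the task becomes to control the moment generating function of the scalar sub-Gaussian variable $\langle\sqrt{2s}\,g,X\rangle$ and then integrate over the Gaussian vector $g$.

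I would first record, as a companion to Lemma~\ref{lem:psionenorm}, the elementary fact that for a zero-mean $Y$ and \emph{every} $\lambda\in\R$ one has $\E\exp(\lambda Y)\le c_1\exp(c_0\lambda^2\|Y\|_{\psi_2}^2)$ with absolute constants $c_0,c_1$ (split $\lambda Y\le\tfrac{c}{2}\lambda^2\|Y\|_{\psi_2}^2+\tfrac{1}{2c}Y^2/\|Y\|_{\psi_2}^2$ for a suitable $c$ and use $\E\exp(Y^2/\|Y\|_{\psi_2}^2)\le 2$). Applied with $Y=\langle\sqrt{2s}\,g,X\rangle$, whose $\psi_2$-norm is at most $\kappa\sqrt{2s}\,(g^{\top}\Sigma g)^{1/2}$ by \eqref{eq:subgauss}, this yields $\E_X\exp(s\|X\|^2)\le c_1\E_g\exp(2c_0 s\kappa^2\,g^{\top}\Sigma g)$. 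Since $g^{\top}\Sigma g=\sum_i\lambda_i g_i^2$ (with $\lambda_i$ the eigenvalues of $\Sigma$), its moment generating function equals $\prod_i(1-2a\lambda_i)^{-1/2}$ for $a<(2\|\Sigma\|)^{-1}$; bounding $-\tfrac12\log(1-x)\le\tfrac12(x+x^2)$ on $[0,\tfrac12]$ and using $\tr(\Sigma^2)\le\|\Sigma\|\tr(\Sigma)$, I obtain, for all $s\le(8c_0\kappa^2\|\Sigma\|)^{-1}$,
\[
\E_X\exp(s\|X\|^2)\le c_1\exp\bigl(2c_0\kappa^2 s\,\tr(\Sigma)+8c_0^2\kappa^4 s^2\,\|\Sigma\|\tr(\Sigma)\bigr).
\]

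Markov's inequality applied to $\exp(s\|X\|^2)$, followed by a routine optimisation of $s$ over this interval (the standard sub-gamma bound), then gives, with probability at least $1-\exp(-t)$, an estimate of the form $\|X\|^2\le 2c_0\kappa^2\tr(\Sigma)+O(\kappa^2\sqrt{t\,\tr(\Sigma)\|\Sigma\|})+O(\kappa^2 t\|\Sigma\|)$, with an additional $O(\kappa^2\|\Sigma\|\log c_1)$ term absorbed into $\tr(\Sigma)\ge\|\Sigma\|$; rounding the absolute constants up yields \eqref{eq:concentrationofthenorm}.

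The only point requiring care is the \emph{global} (all $\lambda$) sub-Gaussian moment bound: the Gaussian vector $g$ has unbounded support, so one cannot restrict to small exponents as in Lemma~\ref{lem:psionenorm}, which is why the multiplicative constant $c_1>1$ appears --- harmless given the slack in the target constant $36$. I do not expect a deeper obstacle; the dimension-free dependence on $\tr(\Sigma)$ instead of $d$ is automatic, because $\Sigma$ enters only through $\|\langle g,X\rangle\|_{\psi_2}^2\asymp g^{\top}\Sigma g$, exactly as the trace entered the ellipsoid bound of Example~\ref{ex:ellipsoids}. If one prefers to mirror that example more literally, the same estimate follows by writing $\|X\|=\sup_{v\in S^{d-1}}\langle X,v\rangle$, smoothing each $v$ by an isotropic Gaussian posterior $\gaussdist(v,\beta^{-1}I_d)$ against the prior $\gaussdist(0,\beta^{-1}I_d)$ (so that $\mathcal{KL}=\beta/2$), applying Lemma~\ref{lem:pacbayes} with a single observation, and optimising over $\beta$ and the scale parameter $\lambda$.
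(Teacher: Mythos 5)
Your main argument is correct, but it is a genuinely different route from the paper's. You bound the moment generating function of $\|X\|^2$ directly, via the Gaussian linearisation $\exp(s\|X\|^2)=\E_g\exp(\sqrt{2s}\,\langle X,g\rangle)$, Fubini, a global sub-Gaussian MGF bound, and the explicit chi-squared MGF of $g^{\top}\Sigma g$; a Chernoff/sub-gamma optimisation then finishes. This is essentially the original argument of Hsu, Kakade and Zhang \cite{hsu2012tail}, and all the steps check out: the AM--GM derivation of $\E\exp(\lambda Y)\le 2\exp(\lambda^2\|Y\|_{\psi_2}^2/4)$ is valid, $-\tfrac12\log(1-x)\le\tfrac12(x+x^2)$ on $[0,\tfrac12]$ together with $\tr(\Sigma^2)\le\|\Sigma\|\tr(\Sigma)$ gives the stated sub-gamma estimate, and the resulting constants are comfortably inside the target $36$ after absorbing $\log c_1$ using $\|\Sigma\|\le\tr(\Sigma)$. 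The paper instead writes $\|X\|=\sup_{v\in S^{d-1}}\langle X,v\rangle$ and runs the variational inequality of Lemma~\ref{lem:pacbayes} with $n=1$, prior $\gaussdist(0,\beta^{-1}I_d)$ and posteriors $\gaussdist(v,\beta^{-1}I_d)$, so that $\mathcal{KL}=\beta/2$ and $\E_{\rho_v}\theta^{\top}\Sigma\theta=v^{\top}\Sigma v+\beta^{-1}\tr(\Sigma)$ --- exactly the alternative you sketch in your closing sentences. The point of the lemma in the paper is precisely to exhibit this PAC-Bayesian derivation of the Hsu--Kakade--Zhang bound (and the same smoothing device, with truncated non-isotropic posteriors, drives the other proofs), so your primary route is the classical one the paper is deliberately reproving by other means. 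What your route buys is a genuine MGF/sub-gamma bound for $\|X\|^2$ (hence moment bounds for free) and slightly sharper constants; what the paper's route buys is a statement that is uniform over $v\in S^{d-1}$ and a template that extends to the ellipsoidal and truncated posteriors used elsewhere in the paper.
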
 
\begin{remark}
We will be frequently using the following relaxation of the bounds \eqref{eq:concentrationofthenorm}:
\[
\|X\|^2 \le 36\kappa^2\left(\tr(\Sigma) + 2t\|\Sigma\|\right).
\]
\end{remark}
\begin{proof}
Observe that $\|X\| = \sup_{v \in S^{d - 1}}\langle X, v\rangle$. Thus, we upper bound $\langle X, v\rangle$ uniformly over the sphere.
Set $\Theta = \mathbb{R}^d$. Let the prior distribution $\mu$ be a multivariate Gaussian distribution with mean zero and covariance $\beta^{-1}I_d$. For $v \in S^{d - 1}$ let $\rho_v$ be a multivariate Gaussian distribution with mean $v$ and covariance $\beta^{-1}I_d$. By the standard formula, we have 
\[
\mathcal{KL}(\rho_v, \mu) = \beta/2.
\]
Our function is $\lambda \langle X,  \theta\rangle$, where $\theta$ is distributed according to $\rho_v$. To apply Lemma \ref{lem:pacbayes} (with $n = 1$) we only need to compute 
$
\E_{\rho_v}\log(\E_X\exp(\lambda \langle X, \theta\rangle)).
$
Conditionally on $\theta$, by the sub-Gaussian assumption, we have 
\begin{equation}
\label{eq:expmom}
\log \E_X\exp(\lambda \langle X,  \theta\rangle) \le 9\kappa^2\lambda^2\theta^{\top}\Sigma \theta,
\end{equation}
where to get the explicit constant, one should keep track of the constant factors in the implications of \cite[Proposition 2.5.2]{Vershynin2016HDP}. We have
\[
\E_{\rho_v}(\theta^{\top} \Sigma \theta) = v^{\top}\Sigma v + \beta^{-1}\tr(\Sigma).
\]
Therefore, for any $\lambda > 0$, simultaneously for all $v \in S^{d - 1}$, we have, with probability at least $1 - \exp(-t)$,
\[
\langle X, v\rangle \le 9\kappa^2\lambda\left(\beta^{-1}\tr(\Sigma) + \|\Sigma\|\right) + \lambda^{-1}(\beta/2 + t).
\]
Choosing $\lambda = \sqrt{\frac{\beta/2 + t}{9\kappa^2(\beta^{-1}\tr(\Sigma) + \|\Sigma\|)}}$ and $\beta = \sqrt{\frac{2t\tr(\Sigma)}{\|\Sigma\|}}$, we prove the claim.
\end{proof}

\begin{remark}
The leading constant in Lemma \eqref{lem:concentrationofthenorm} can be made optimal if we assume a bound on the moment generating function as in the Gaussian case. That is, if instead of \eqref{eq:expmom}, we have, conditionally on $\theta$,
\[
\log \E_X\exp(\lambda \langle X,  \theta\rangle) \le \lambda^2\theta^{\top}\Sigma \theta/2,
\]
then, optimizing with respect to $\lambda$ and $\beta$, one can show that, with probability at least $1 - \exp(-t)$, it holds that 
\[
\|X\|^2 \le \tr(\Sigma) + 2\sqrt{2t\tr(\Sigma)\|\Sigma\|} + 2t\|\Sigma\|.
\]
This is what one can achieve if $X$ is a zero mean Gaussian vector, in which case one can use the Gaussian concentration inequality (see \cite[Example 5.7]{boucheron2013concentration}). This observation appears (implicitly) in the works of Catoni and co-authors.
\end{remark}
Define the truncation function
\begin{equation}
\psi(x) = 
    \begin{cases}
      x,\quad &\textrm{for}\; x \in [-1, 1];
      \\
      \operatorname{sign}(x),\quad &\textrm{for}\; |x| > 1.
    \end{cases}
\end{equation}
It is a symmetric function such that for all $x \in \mathbb{R}$, 
\[
\psi(x)\le \log(1 + x + x^2).
\]
The following bound will be important in our analysis.

\begin{lemma}
\label{lem:almostconvex}
Let $\psi$ be as above and let $Z$ be a square integrable random variable. We have
\[
\psi(\E Z) \le \E\log(1 + Z + Z^2) + \min\{1, \E Z^2/6\}.
\]
Moreover, for any $a > 0$, it holds that
\[
 \E\log(1 + Z + Z^2) + a\E\min\{1, Z^2/6\} \le \E\log\left(1 + Z + \left(1 + \frac{(7 + \sqrt{6})(\exp(a) - 1)}{6}\right) Z^2\right).
\]
\end{lemma}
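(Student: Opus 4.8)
The two displayed bounds are of different nature and I would prove them separately. Since we already have the pointwise bounds $\psi(x)\le\phi(x):=\log(1+x+x^2)$ and $\psi(x)\le 1$, the first inequality reduces to the claim $\phi(\E Z)\le\E\phi(Z)+\tfrac16\E Z^2$, which I would then combine with $\psi(\E Z)\le\min\{1,\phi(\E Z)\}$. The heart of the matter — and the reason for the name ``almost convex'' — is that $\phi$ is convex \emph{up to a quadratic correction}: the function $g(x):=\phi(x)+x^2/6$ is convex on $\R$. Indeed $\phi'(x)=\frac{1+2x}{1+x+x^2}$ and $\phi''(x)=\frac{1-2x-2x^2}{(1+x+x^2)^2}$, so
\[
g''(x)=\frac{1-2x-2x^2}{(1+x+x^2)^2}+\frac13=\frac{3(1-2x-2x^2)+(1+x+x^2)^2}{3(1+x+x^2)^2}=\frac{(x-1)^2(x+2)^2}{3(1+x+x^2)^2}\ge 0,
\]
the single algebraic identity $x^4+2x^3-3x^2-4x+4=(x-1)^2(x+2)^2$ carrying the whole argument. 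Applying Jensen to $g$ gives $\phi(\E Z)+\tfrac16(\E Z)^2\le\E\phi(Z)+\tfrac16\E Z^2$, hence $\phi(\E Z)\le\E\phi(Z)+\tfrac16\big(\E Z^2-(\E Z)^2\big)\le\E\phi(Z)+\tfrac16\E Z^2$; combining with $\psi(\E Z)\le 1$ yields the first inequality.

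\textbf{The second inequality.} Both sides are expectations of functions of the single variable $Z$, so it suffices to establish the pointwise statement: for all $z\in\R$ and $a>0$, writing $c=c(a):=\tfrac{(7+\sqrt{6})(\exp(a)-1)}{6}$,
\[
\log(1+z+z^2)+a\min\{1,z^2/6\}\le\log\!\Big(1+z+(1+c)z^2\Big),
\]
i.e., after exponentiating, $\exp\big(a\min\{1,z^2/6\}\big)\le 1+\tfrac{c\,z^2}{1+z+z^2}$. I would split at $|z|=\sqrt{6}$. For $z^2\ge 6$ the left side is $\exp(a)$, so the inequality is $c\ge(\exp(a)-1)\frac{1+z+z^2}{z^2}=(\exp(a)-1)\big(1+\tfrac1z+\tfrac1{z^2}\big)$; over $|z|\ge\sqrt{6}$ the right side is largest at $z=\sqrt{6}$, where it equals $(\exp(a)-1)\frac{7+\sqrt{6}}{6}=c$. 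For $z^2<6$, convexity of $t\mapsto\exp(at)$ on $[0,1]$ gives $\exp(az^2/6)-1\le\tfrac{z^2}{6}(\exp(a)-1)$, so it suffices that $c\ge\frac{(\exp(a)-1)(1+z+z^2)}{6}$; over $|z|<\sqrt{6}$ the right side is supremized as $z\to\sqrt{6}$, again giving $c$. The two extremal bounds coincide because $\frac{1+z+z^2}{z^2}=\frac{1+z+z^2}{6}$ at $z=\sqrt{6}$ — this continuity at the seam is exactly why the single constant $\frac{7+\sqrt{6}}{6}$ works across both regimes.

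\textbf{Where the difficulty lies.} Essentially everything here is elementary once two points are in place: (a) recognizing and verifying the factorization $3(1-2x-2x^2)+(1+x+x^2)^2=(x-1)^2(x+2)^2$, which is what makes the coefficient $\tfrac16$ correct (and sharp, via the double roots at $x=1$ and $x=-2$); and (b) in the second part, spotting that the worst value of $z$ in \emph{both} regimes is the common boundary $z=\sqrt{6}$, so that splitting the argument costs nothing. The rest — the derivative computation, Jensen, and the convexity-of-$\exp$ estimate on $[0,1]$ — is routine and I would not expect any obstruction there.
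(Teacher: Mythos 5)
Your argument follows the same route as the paper on both halves: convexity of $g(x)=\log(1+x+x^2)+x^2/6$ plus Jensen for the first inequality, and the chord bound $e^{t}\le 1+t(e^{a}-1)/a$ on $[0,a]$ combined with the estimate $(1+z+z^2)\min\{1,z^2/6\}\le\frac{7+\sqrt6}{6}z^2$ for the second. Your pointwise, exponentiated case split at $|z|=\sqrt6$ in the second part is a transparent reorganization of the paper's computation and is correct; your explicit factorization $3(1-2x-2x^2)+(1+x+x^2)^2=(x-1)^2(x+2)^2$ is also correct and is a nice way to certify the convexity that the paper merely asserts.

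However, the concluding step of your first part --- ``combining with $\psi(\E Z)\le 1$'' --- does not close the argument. Writing $\phi(x)=\log(1+x+x^2)$, $A=\E\phi(Z)$ and $B=\E Z^2/6$, you have established $\psi(\E Z)\le A+B$ and $\psi(\E Z)\le 1$, and you need $\psi(\E Z)\le A+\min\{1,B\}$. When $B\le 1$ the first bound suffices; but when $B>1$ the target is $A+1$, and $\psi(\E Z)\le 1$ only yields this if $A\ge 0$, which can fail because $\phi$ is negative on $(-1,0)$. The gap is not cosmetic: taking $Z=-0.9$ with probability $0.993$ and $Z=300$ with probability $0.007$ gives $\psi(\E Z)=1$ and $\E Z^2/6>1$ but $\E\log(1+Z+Z^2)\approx-0.014$, so the stated inequality fails by about $0.014$. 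You should be aware that the paper's own proof has exactly the same flaw in its last displayed line (the step $\min\{A+B,1\}\le A+\min\{1,B\}$ is false whenever $A<0$ and $B>1$), so you have faithfully reproduced the intended argument; still, to be correct the statement needs a small repair. For instance, since $\phi(z)\ge\log(3/4)$ one has $\min\{1,A+B\}\le A+\min\{1-A,B\}\le A+\min\bigl\{1+\log(4/3),B\bigr\}$, which does hold and serves the downstream application in Lemma \ref{lem:truncationlemma} equally well, at the price of adjusting constants.
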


\begin{proof}
In the proof we use the following fact. The function
\[
x \mapsto \log(1 + x + x^2) + x^2/6
\]
is convex. Observe also that if $0 \le t \le a$, then
\begin{equation}
\label{eq:useful}
    \exp(t) \le 1 + \frac{t(e^{a} - 1)}{a}.
\end{equation}
We proceed with the following lines
\begin{align*}
    \psi(\E Z) &= \min\{\psi(\E Z), 1\}
    \\
    &\le \min\{\psi(\E Z) + (\E Z)^2/6, 1\}
    \\
    &\le \min\{\log(1 + \E Z + (\E Z)^2) + (\E Z)^2/6, 1\}
    \\
    &\le \min\{\E \left(\log(1 + Z + Z^2) + Z^2/6\right), 1\}\quad(\text{By Jensen's inequality})
    \\
    &\le \E \log(1 + Z + Z^2) + \min\{1, \E Z^2/6\}.
 \end{align*}
For the second inequality we have
 \begin{align*}
    &\E\log(1 + Z + Z^2) + a\E\min\{1, Z^2/6\} 
    \\
    &= \E \log\left(\left(1 + Z + Z^2\right)\exp(\min\{a, aZ^2/6\})\right) 
    \\
    &\le\E \log\left(\left(1 + Z + Z^2\right)\left(1 + \min\{1, Z^2/6\}\left(e^{a} - 1\right)\right)\right)\quad \text{by}\; \eqref{eq:useful}.
\end{align*}
Further, we have
\begin{align*}
(1 + Z + Z^2)\min\{1, Z^2/6\} &\le (Z^2/6 + Z^2 + Z\min\{1, Z^2/6\})
\\
&\le (7/6 + 1/\sqrt{6})Z^2, 
\end{align*}
where we used that if $|Z| \le \sqrt{6}$, then $
Z \min\{1, Z^2/6\} \le |Z|^3/6 \le Z^2/\sqrt{6}.$ Otherwise, if $|Z| \ge \sqrt{6}$, then
$
Z \le Z^2/\sqrt{6}.
$
These computations conclude the proof.
\end{proof}

This allows us to prove the following uniform bound.  
\begin{lemma}
\label{lem:truncationlemma}
Assume that $X_1, \ldots X_n$ are independent copies of a zero-mean random vector $X$ in $\mathbb{R}^d$ with covariance $\Sigma$. Let $s$ be an integer and assume that $X$ satisfies that for some $\eta \ge 1$ and any $y \in \mathbb{R}^d$,
 \begin{equation}
 \label{eq:subexponentialassumption}
 \left(\E|\langle y, X\rangle|^{2s}\right)^{\frac{1}{2s}} \le \eta\sqrt{y^{\top}\Sigma y}.
 \end{equation}
 Fix the truncation level 
 \[
 \lambda = \sqrt{\frac{\mathbf{r}(\Sigma) + t}{n\eta^{2s}\|\Sigma\|^s}}.
 \]
Then there is $C_s > 0$ that depends only on $s$ such that for any $t > 0$, with probability at least $1 - 2\exp(-t)$, it holds that
\[
\sup\limits_{v \in S^{d - 1}}\left|\frac{1}{n\lambda}\sum\limits_{i = 1}^n\psi(\lambda\langle v, X_i\rangle^s) - \E\langle v, X\rangle^s\right| \le C_s\eta^s\|\Sigma\|^{s/2}\sqrt{\frac{\mathbf{r}(\Sigma) + t}{n}}.
\]
\end{lemma}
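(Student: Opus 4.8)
The plan is to apply the variational bound of Lemma~\ref{lem:pacbayes} along the lines of the proof of Theorem~\ref{thm:maintheorem}, but with a parametrization that ``linearizes'' the $s$-homogeneous form $v \mapsto \langle v, X\rangle^s$. I take $\Theta = (\R^d)^s$ with a generic point $\theta = (\theta_1, \dots, \theta_s)$, let the prior $\mu$ be the product of $s$ copies of $\gaussdist(0, \beta^{-1} I_d)$, and for $v \in S^{d-1}$ let the posterior $\rho_v$ be the product of $s$ copies of $\gaussdist(v, \beta^{-1} I_d)$, so that $\mathcal{KL}(\rho_v, \mu) = s\beta/2$. Writing $W = W(X, \theta) = \prod_{j=1}^s \langle \theta_j, X\rangle$, independence of the coordinates under $\rho_v$ yields the exact identity $\E_{\rho_v} W = \prod_{j=1}^s \E_{\rho_v} \langle \theta_j, X\rangle = \langle v, X\rangle^s$, which plays the role that bilinearity played in Theorem~\ref{thm:maintheorem}. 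Throughout I choose $\beta = 2\mathbf{r}(\Sigma)$, so that $\E_{\rho_v}(\theta_1^\top \Sigma \theta_1) = v^\top\Sigma v + \beta^{-1}\tr(\Sigma) \le \tfrac32\|\Sigma\|$.

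To handle the non-convex truncation $\psi$, I feed into Lemma~\ref{lem:pacbayes} not $\psi(\lambda W)$ but the surrogate $f_+(X, \theta) = \log(1 + \lambda W + C_0 \lambda^2 W^2)$, where $C_0$ is the absolute constant furnished by Lemma~\ref{lem:almostconvex}. Since the quadratic $1 + \lambda W + C_0\lambda^2 W^2$ (in $W$) has negative discriminant, $\exp(f_+) = 1 + \lambda W + C_0\lambda^2 W^2 > 0$ pointwise, hence $\E_X \exp(f_+(X, \theta)) = 1 + \lambda\E_X W + C_0\lambda^2 \E_X W^2 < \infty$ by the moment assumption~\eqref{eq:subexponentialassumption} — this is precisely where the $\psi$-truncation, rather than truncation of the posterior, secures integrability. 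Applying Lemma~\ref{lem:almostconvex} to the $\rho_v$-random variable $\lambda W(X_i, \cdot)$, whose mean is $\lambda\langle v, X_i\rangle^s$, gives $\E_{\rho_v} f_+(X_i, \theta) \ge \psi(\lambda\langle v, X_i\rangle^s)$, so the empirical side of Lemma~\ref{lem:pacbayes} dominates $\tfrac1n\sum_i \psi(\lambda\langle v, X_i\rangle^s)$. On the other side, $\log\E_X\exp(f_+) \le \lambda\E_X W + C_0\lambda^2\E_X W^2$, and by the generalized H\"older inequality with~\eqref{eq:subexponentialassumption} one has $\E_X W^2 = \E_X\prod_j \langle\theta_j, X\rangle^2 \le \prod_j (\E_X\langle\theta_j, X\rangle^{2s})^{1/s} \le \eta^{2s}\prod_j \theta_j^\top\Sigma\theta_j$; taking $\E_{\rho_v}$ and using independence, $\E_{\rho_v}\E_X W^2 \le \eta^{2s}(\tfrac32\|\Sigma\|)^s$, while $\E_{\rho_v}\E_X W = \E_X\langle v, X\rangle^s = \E\langle v, X\rangle^s$.

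Putting this together, Lemma~\ref{lem:pacbayes} gives, simultaneously over all $v \in S^{d-1}$, with probability at least $1 - e^{-t}$,
\[
\frac{1}{n}\sum_{i=1}^n \psi(\lambda\langle v, X_i\rangle^s) \;\le\; \lambda\,\E\langle v, X\rangle^s + C_0(\tfrac32)^s\lambda^2\eta^{2s}\|\Sigma\|^s + \frac{s\mathbf{r}(\Sigma) + t}{n}.
\]
Dividing by $\lambda$ and inserting $\lambda = \big((\mathbf{r}(\Sigma)+t)/(n\eta^{2s}\|\Sigma\|^s)\big)^{1/2}$, the term $\lambda\eta^{2s}\|\Sigma\|^s$ equals $\eta^s\|\Sigma\|^{s/2}\sqrt{(\mathbf{r}(\Sigma)+t)/n}$ and $(s\mathbf{r}(\Sigma)+t)/(n\lambda) \le s\,\eta^s\|\Sigma\|^{s/2}\sqrt{(\mathbf{r}(\Sigma)+t)/n}$, so the right-hand side becomes $\E\langle v, X\rangle^s + C_s\eta^s\|\Sigma\|^{s/2}\sqrt{(\mathbf{r}(\Sigma)+t)/n}$ with $C_s$ depending only on $s$. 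For the matching lower bound I repeat the argument with $f_-(X, \theta) = \log(1 - \lambda W + C_0\lambda^2 W^2)$, for which $\E_{\rho_v}f_-(X_i, \theta) \ge \psi(-\lambda\langle v, X_i\rangle^s) = -\psi(\lambda\langle v, X_i\rangle^s)$ since $\psi$ is odd; a union bound over the two events yields probability $1 - 2e^{-t}$, as claimed.

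The step I expect to be the crux is relating $\psi(\lambda\langle v, X_i\rangle^s)$, a value at the single vector $v$, to something smoothed over the posterior: because $s \ge 2$, the bilinear trick of Theorem~\ref{thm:maintheorem} is unavailable, and because $\psi$ is not convex, a naive Jensen step is unavailable too. The product-of-$s$-copies parametrization removes the first difficulty, since each factor of $W$ is linear in its $\theta_j$ and the posterior mean is exact, while Lemma~\ref{lem:almostconvex}, which promotes the elementary inequality $\psi(x) \le \log(1+x+x^2)$ to a form stable under averaging by inflating the quadratic coefficient, removes the second; the remainder is bookkeeping to keep the constants dimension-free through the choice $\beta = 2\mathbf{r}(\Sigma)$.
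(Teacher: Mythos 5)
Your setup (product Gaussian prior/posterior on $(\R^d)^s$, the surrogate $\log(1+\lambda W+C\lambda^2W^2)$, the H\"older step, and the choice of $\lambda$ and $\beta$) matches the paper, but the step you yourself call the crux contains a genuine gap. Lemma \ref{lem:almostconvex} does \emph{not} give $\E_{\rho_v}\log(1+\lambda W+C_0\lambda^2W^2)\ge\psi(\lambda\langle v,X_i\rangle^s)$. Its first part produces the correction term $\min\{1,\lambda^2\E_{\rho}W^2/6\}$ (a minimum of an expectation), while its second part can only absorb terms of the form $a\,\E_{\rho}\min\{1,\lambda^2W^2/6\}$ (an expectation of a minimum). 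Since $x\mapsto\min\{1,x\}$ is concave, Jensen gives $\E_\rho\min\{1,\lambda^2W^2/6\}\le\min\{1,\lambda^2\E_\rho W^2/6\}$ — the inequality points the wrong way, and the two quantities can differ by an arbitrary factor in general (e.g.\ when $W^2$ is huge on a set of tiny $\rho$-measure). So chaining the two parts of Lemma \ref{lem:almostconvex} as you do is not valid without an additional anti-concentration input showing that $W^2$ is comparable to $\E_\rho W^2$ with $\rho$-probability bounded below.

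The paper bridges exactly this gap in two pieces. Conditionally on $X$ it computes $\E_\rho W^2=(\langle X,v\rangle^2+\beta^{-1}\|X\|^2)^s\le 2^{s-1}(\langle X,v\rangle^{2s}+\beta^{-s}\|X\|^{2s})$ and splits the min accordingly. The $\langle X,v\rangle^{2s}$ piece is absorbed into the log via the observation that each $\langle X,\theta_k\rangle$ is symmetric about its mean $\langle X,v\rangle$, so $\Pr_\rho(W^2\ge\langle X,v\rangle^{2s})\ge2^{-s}$, which converts the min-of-expectation into $2^s$ times an expectation-of-min; only then does the second part of Lemma \ref{lem:almostconvex} apply. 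The $\beta^{-s}\|X\|^{2s}$ piece cannot be absorbed this way and survives as an extra empirical term $\frac{1}{n\lambda}\sum_i\min\{1,2^{s-1}\lambda^2\beta^{-s}\|X_i\|^{2s}/6\}$, which the paper controls by a separate Bernstein inequality together with the bound $(\E\|X\|^{2s})^{1/s}\le\eta^2\tr(\Sigma)$; this is where the second $\exp(-t)$ in the probability actually comes from (the two-sided bound follows from the same PAC-Bayes event by taking the posterior $\rho_{v,\dots,v,-v}$, not from a second application of Lemma \ref{lem:pacbayes}). Your argument could in principle be repaired more directly by proving a small-ball estimate $\Pr_\rho\bigl(W^2\ge 2^{-s}\E_\rho W^2\bigr)\ge c^s$ for each fixed $X$ (each factor satisfies $\Pr(g^2\ge(m^2+\sigma^2)/2)\ge c$ for a Gaussian $g$ with mean $m$ and variance $\sigma^2$), which would even avoid the Bernstein step — but as written, the assertion is unsupported and the proof is incomplete at its central point.
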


\begin{proof}
Fix $\beta > 0$ and let $
\Theta = (\mathbb{R}^d)^s.$
Choose the prior distribution $\mu$ on $\Theta$ as the product of $s$ multivariate Gaussian distributions in $\mathbb{R}^d$ with mean zero and covariance $\beta^{-1}I_d$.  For $v \in S^{d - 1}$ let the posterior distribution $\rho_{v, \ldots, v}$ be defined as the product of $s$ multivariate Gaussian distributions in $\mathbb{R}^d$ each with mean $v$ and covariance $\beta^{-1}I_d$. For $(\theta_1, \ldots, \theta_s)$ distributed according to $\rho_{v, \ldots, v}$, we have
\begin{equation}
\label{eq:tensorexpectation}
\E_{\rho_{v, \ldots, v}}(\theta_1, \ldots, \theta_s) = (v, \ldots, v),
\end{equation}
In what follows, we use the simplifying notation $\rho = \rho_{v, \ldots, v}$ and the marginals of $\rho$ will be denoted by $\rho_{k}$ for all $k = 1, \ldots, s$. Using the additivity of the Kullback-Leibler divergence for product measures, we get
\[
\mathcal{KL}(\rho_{v, \ldots, v}, \mu) = s\beta/2.
\]
Fix $\lambda > 0$ and let $\beta = \mathbf{r}(\Sigma)$. Our plan will be to bound $\psi\left(\lambda\langle v, X\rangle^s\right)$ by 
\[
\E_{\rho}\log\left(1 + \lambda \prod_{k = 1}^s\langle X, \theta_k\rangle + c_s\lambda^{2} \prod_{k = 1}^s\langle X, \theta_k\rangle^2\right),
\]
where $c_s > 0$ depends only on $s$ and some other terms that do not depend on $v$; then we apply Lemma \ref{lem:pacbayes}. This idea is related to the influence function approach used by Catoni \cite{catoni2012challenging, catoni2016pac} in the context of robust estimation. Using \eqref{eq:tensorexpectation} and the first part of Lemma \ref{lem:almostconvex}, we have
\begin{align*}
\psi\left(\lambda\langle v, X\rangle^s\right)  &= \psi\left(\lambda\E_{\rho} \prod_{k = 1}^s\langle X, \theta_k\rangle\right) 
\\
&\le \E_{\rho}\log\left(1 + \lambda \prod_{k = 1}^s\langle X, \theta_k\rangle + \lambda^{2} \prod_{k = 1}^s\langle X, \theta_k\rangle^2\right) + \min\left\{1, \lambda^2\E_{\rho} \prod_{k = 1}^s\langle X, \theta_k\rangle^2/6\right\}.
\end{align*}
We start with the last summand. Conditionally on $X$, the following holds:
\[
\E_{\rho} \prod_{k = 1}^s\langle X, \theta_k\rangle^2 = \prod_{k = 1}^s\E_{\rho_k}\langle X, \theta_k\rangle^2 = \left(\langle X,v\rangle^2 + \beta^{-1}\|X\|^2\right)^s \le 2^{s - 1}(\langle X,v\rangle^{2s} + \beta^{-s}\|X\|^{2s}).
\]
Therefore, we have
\[
\min\left\{1, \lambda^2\E_{\rho} \prod_{k = 1}^s\langle X, \theta_k\rangle^2/6\right\} \le \min\left\{1, 2^{s - 1}\lambda^2\langle X,v\rangle^{2s}/6\right\} + \min\left\{1, 2^{s - 1}\lambda^2\beta^{-s}\|X\|^{2s}/6\right\}.
\]
We observe that conditionally on $X$, for each $k$, the distribution of $\langle X, \theta_k\rangle$ is Gaussian with mean $\langle X, v\rangle$. Since it is symmetric, we have that $\Pr(\langle X, \theta_k\rangle^2 \ge \langle X, v\rangle^2) \ge \frac{1}{2}$. Thus, with probability at least $\frac{1}{2^{s}}$, it holds that
$
\prod_{k = 1}^s\langle X, \theta_k\rangle^2 \ge \langle X, v\rangle^{2s}.
$ 
This observation implies that 
\[
\min\left\{1, 2^{s - 1}\lambda^2\langle X,v\rangle^{2s}/6\right\} \le  2^s\E_{\rho} \min\left\{1, 2^{s - 1}\lambda^2\prod_{k = 1}^s\langle X, \theta_k\rangle^2/6\right\}.
\]
Choosing $a = 2^s\max\{1, 2^{s - 1}/6\}$ in the second part of Lemma \ref{lem:almostconvex}, we have that there is $c_s > 1$ depending only on $s$ such that
\begin{align*}
&\E_{\rho}\log\left(1 + \lambda \prod_{k = 1}^s\langle X, \theta_k\rangle + \lambda^{2} \prod_{k = 1}^s\langle X, \theta_k\rangle^2\right) + 2^s\E_{\rho} \min\left\{1, 2^{s - 1}\lambda^2\prod_{k = 1}^s\langle X, \theta_k\rangle^2/6\right\}
\\
&\qquad \le \E_{\rho}\log\left(1 + \lambda \prod_{k = 1}^s\langle X, \theta_k\rangle + c_s\lambda^{2} \prod_{k = 1}^s\langle X, \theta_k\rangle^2\right).
\end{align*}

We plug $f(X, \theta_1, \ldots, \theta_s) = \log\left(1 + \lambda \prod_{k = 1}^s\langle X, \theta_k\rangle + c_s\lambda^{2} \prod_{k = 1}^s\langle X, \theta_k\rangle^2\right)$ into Lemma \ref{lem:pacbayes}. Using $\log(1 + y) \le y$ for $y \ge -1$, we have
\begin{align*}
\E_{\rho}\log\E\exp\left(f(X, \theta_1, \ldots, \theta_s)\right) &\le \lambda\E_{\rho}\E\prod_{k = 1}^s\langle X, \theta_k\rangle + c_s\lambda^2\E_{\rho}\E\prod_{k = 1}^s\langle X, \theta_k\rangle^2
\\
&= \lambda\E\langle X, v\rangle^s + c_s\lambda^2\E_{\rho}\E\prod_{k = 1}^s\langle X,\theta_k\rangle^2,
\end{align*}
We need to upper bound the last term. Applying H\"{o}lder's inequality and since $\theta_k$ are independent for $k = 1, \ldots, s$, we have
\begin{equation}
\label{eq:holder}
\E_{\rho}\E\prod_{k = 1}^s\langle X,\theta_k\rangle^2 \le \E_{\rho}\prod_{k = 1}^s\left(\E\langle X, \theta_k\rangle^{2s}\right)^\frac{1}{s} \le \E_{\rho}\prod_{k = 1}^s\left(\eta\sqrt{\theta_k^{\top}\Sigma\theta_k}\right)^2 \le (2\eta^2\|\Sigma\|)^s,
\end{equation}
where we used that for our choice $\beta = \mathbf{r}(\Sigma)$,
\[
\E_{\rho_k}\theta_k^{\top}\Sigma\theta_k \le \|\Sigma\| + \beta^{-1}\tr(\Sigma) = 2\|\Sigma\|.
\]
Thus, Lemma \ref{lem:pacbayes} implies, that with probability at least $1 - \exp(-t)$, for all $v \in S^{d-1}$,
\[
\frac{1}{n\lambda}\sum\limits_{i = 1}^n\E_{\rho}\log\left(1 + \lambda \prod_{k = 1}^s\langle X, \theta_k\rangle + c_s\lambda^{2} \prod_{k = 1}^s\langle X, \theta_k\rangle^2\right) \le \E\langle X, v\rangle^s + c_s\lambda(2\eta^2\|\Sigma\|)^s + \frac{s\mathbf{r}(\Sigma) + 2t}{2n\lambda}.
\] 
By the above computations we have on the same event
\[
\frac{1}{n\lambda}\sum\limits_{i = 1}^n\psi\left(\lambda\langle v, X_i\rangle^s\right) \le \E\langle X, v\rangle^s + c_s\lambda(2\eta^2\|\Sigma\|)^s + \frac{s\mathbf{r}(\Sigma) + 2t}{2n\lambda} + \frac{1}{n\lambda}\sum\limits_{i = 1}^n\min\left\{1, \frac{2^{s - 1}\lambda^2\|X_i\|^{2s}}{6\beta^s}\right\} .
\]
It is left to control the last sum. Denote $Y = \min\left\{1, \frac{2^{s - 1}\lambda^2\|X\|^{2s}}{6\beta^s}\right\}$ and let $Y_1, \ldots, Y_n$ be independent copies of $Y$. Observe that $Y \in [0, 1]$, which implies $\var(Y) \le \E Y$.
By the standard Bernstein inequality \cite[Corollary 2.11]{boucheron2013concentration}, with probability at least $1 - \exp(-t)$,
\[
\frac{1}{n\lambda}\sum\limits_{i = 1}^nY_i \le \frac{1}{\lambda}\left(\E Y + \sqrt{\frac{2\E Yt}{n}}+ \frac{2t}{3n}\right) \le \frac{1}{\lambda}\left(2\E Y + \frac{7t}{6n}\right).
\]
Finally, denoting the standard basis in $\mathbb{R}^d$ by $e_1, \ldots, e_d$ and using the triangle inequality, we have
\[
\left(\E\|X\|^{2s}\right)^{\frac{1}{s}} = \left(\E\left(\sum\nolimits_{j = 1}^d\langle e_j, X\rangle^2\right)^s\right)^{\frac{1}{s}} \le \sum\nolimits_{j = 1}^d\left(\E\langle e_j, X\rangle^{2s}\right)^{\frac{1}{s}} \le \eta^2\tr(\Sigma), 
\]
which implies $\E Y \le 2^{s-1}\eta^{2s}\lambda^2\|\Sigma\|^{s}/6$.
Combining these estimates and taking our choice of $\lambda$ into account, the one-sided bound follows.

To finish the proof, we need to get a two-sided bound. Since the function $\psi$ is symmetric, it is enough to consider $\rho_{v, \ldots, v, -v}$ instead of $\rho_{v, \ldots, v}$ as the posterior distribution, for which the same analysis holds. The claim follows.
\end{proof}

\subsubsection{Bounds on the norms of random vectors}

First, consider the sub-Gaussian case. By \cite[Proposition 2.5.2]{Vershynin2016HDP} the sub-Gaussian vector $X$ satisfies for all $q \ge 1$ and any $y \in \mathbb{R}^d$,
\begin{equation}
\label{eq:lql2}
\left(\E|\langle y, X\rangle|^q\right)^{1/q} \le 3\sqrt{q}\|\langle y, X\rangle\|_{\psi_2} \le 3\kappa\sqrt{qy^{\top}\Sigma y}.
\end{equation}
Choosing $q = 2s$, we obtain that when applying Lemma \ref{lem:truncationlemma} we may choose $\eta = 3\kappa\sqrt{2s}$. 

The norm of the sub-Gaussian vector can easily be analyzed using Lemma \ref{lem:concentrationofthenorm}. Indeed, by the union bound, with probability at least $1 - n\exp(-t)$, it holds that
\begin{equation}
\label{eq:normswhpbound}
\max_{i}\|X_i\| \le \left(36\kappa^2\left(\tr(\Sigma) + 2t\|\Sigma\|\right)\right)^{\frac{1}{2}} \le \kappa\sqrt{108\tr(\Sigma)},
\end{equation}
whenever $t \le \mathbf{r}(\Sigma)$. 

Second, we consider the log-concave case. We use Borell's characaterization of log-concave distributions (see e.g., \cite[Lemma 2.3]{adamczak2010quantitative}): If $X$ is a zero mean random vector in $\mathbb{R}^d$ with a log-concave distribution, then for any $y \in S^{d - 1}$,
\begin{equation}
\label{eq:logconcave}
 \|\langle y, X\rangle\|_{\psi_1} \le \kappa\sqrt{y^{\top}\Sigma y},
\end{equation}
where $\kappa$ is a universal constant. We use the symbol $\kappa$ in the proof but absorb it by the generic constant in our final statements. By \eqref{eq:psione} and \eqref{eq:logconcave} we have for any $y \in S^{d - 1}$ and $q \ge 1$,
\begin{equation}
\label{eq:psioneeqv}
\E|\langle y, X\rangle|^q \le 2q!\|\langle y, X\rangle\|_{\psi_1}^q \le 2q^q\|\langle y, X\rangle\|_{\psi_1}^q \le 2(\kappa q)^q(y^{\top}\Sigma y)^{q/2}.
\end{equation}
Choosing $q = 2s$, we obtain that in Lemma \ref{lem:truncationlemma} we may choose $\eta = 4\kappa s$. The second component is the inequality of Paouris \cite{paouris2006concentration}, written in the following form \cite[Theorem 2]{adamczak2014short}: If  $X$ is a random vector in $\mathbb{R}^d$ with a log-concave distribution, then for any $q \ge 1$,
\begin{equation}
\label{eq:paouris}
\left(\E\|X\|^q\right)^{1/q} \le C\bigl(\E\|X\| + \sup\nolimits_{y \in S^{d - 1}}\left(\E|\langle y, X\rangle|^q\right)^{1/q}\bigr),
\end{equation}
where $C > 0$ is a universal constant. For a zero mean random vector $X$ we have $\E\|X\| \le \sqrt{\tr(\Sigma)}$. By  \eqref{eq:psioneeqv} we have $\E|\langle y, X\rangle|^q \le 2(\kappa q)^q\|\Sigma\|^{q/2}$. Thus, by Markov's inequality together with \eqref{eq:paouris}, with probability at least $1 - \exp(-t)$, it holds that $
\|X\| \le eC\left(\sqrt{\tr(\Sigma)} + 2\kappa t\sqrt{\|\Sigma\|}\right).
$
By the union bound, with probability at least $1 - n\exp(-t)$,
\begin{equation}
\label{eq:whpcontrol}
\max_i\|X_i\| \le eC\left(\sqrt{\tr(\Sigma)} + 2\kappa t\sqrt{\|\Sigma\|}\right).
\end{equation}

\subsubsection{Proof of Theorem \ref{thm:logconcavetheorem}}
We start with an upper tail. Let $\lambda > 0$ be a fixed truncation level. We can write the following decomposition similar to the one used in \cite{adamczak2010quantitative}:

\begin{align}
&\sup\limits_{v \in S^{d - 1}}\left(\frac{1}{n}\sum\limits_{i = 1}^n\langle v, X_i\rangle^2 - \E\langle v, X\rangle^2\right) \nonumber
\\
&\le \sup\limits_{v \in S^{d - 1}}\frac{1}{n}\sum\limits_{i = 1}^n\langle v, X_i\rangle^2\ind{\lambda\langle v, X_i\rangle^2 > 1} +  \sup\limits_{v \in S^{d - 1}}\left(\frac{1}{n\lambda}\sum\limits_{i = 1}^n\psi(\lambda\langle v, X_i\rangle^2) - \E\langle v, X\rangle^2\right). 
\label{eq:decomposition}
\end{align}
The analysis of the second term will follow from Lemma \ref{lem:truncationlemma}. Indeed, for $\eta = 8\kappa$, defining 
\[
\lambda = \frac{1}{\|\Sigma\|}\sqrt{\frac{2\mathbf{r}(\Sigma)}{n(8\kappa)^4}},
\]
which will be our choice throughout the proof, we have, with probability at least $1 - 2\exp(-\mathbf{r}(\Sigma))$,
\[
\sup\limits_{v \in S^{d - 1}}\left(\frac{1}{n\lambda}\sum\limits_{i = 1}^n\psi(\lambda\langle v, X_i\rangle^2) - \E\langle v, X\rangle^2\right) \le c \kappa^2\|\Sigma\|\sqrt{\frac{\mathbf{r}(\Sigma)}{n}},
\]
where $c > 0$ is an absolute constant. Since $\kappa$ is an absolute constant in the log-concave case, we will sometimes absorb it by other absolute constants.

The analysis of large summands can be done via a well-known decoupling-chaining argument. This argument leads to the following result.
\begin{lemma}[Proposition 9.4.2 in \cite{Talagrand2014}]
\label{lem:pickypart}
Assume that $Y_1, \ldots, Y_n$ are independent copies of a random vector $Y$ in $\mathbb{R}^d$ such that for all $x \in S^{d-1}$, it holds that
\[
\|\langle x, Y\rangle\|_{\psi_1} \le 1.
\]
There are absolute constants $c_1, c_2 > 0$ such that the following holds. For any $t > 0$, with probability at least $1 - c_1\exp(-t)$, we have uniformly over $k = 1, \ldots, n$,
\[
\sup\limits_{x \in S^{d - 1}}\sup\limits_{\substack{I \subseteq \{1, \ldots, n\}, \\ |I|\le k}}\left(\sum\limits_{i \in I}\langle x, Y_i \rangle^2\right)^{\frac{1}{2}} \le c_2\left(\max\limits_{i}\|Y_i\| + \sqrt{k}\log\left(\frac{en}{k}\right) + t\right).
\]  
\end{lemma}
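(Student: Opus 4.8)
The statement is Proposition~9.4.2 of \cite{Talagrand2014}, whose proof is the decoupling-chaining argument of \cite{adamczak2010quantitative}; I indicate how I would reconstruct it. The first move is to linearize the quadratic expression. Writing $U_{n,k}=\{\alpha\in\R^n:\|\alpha\|_2\le1,\ |\mathrm{supp}(\alpha)|\le k\}$ and using $\bigl(\sum_{i\in I}a_i^2\bigr)^{1/2}=\sup\bigl\{\sum_{i\in I}\alpha_i a_i:\sum_{i\in I}\alpha_i^2\le1\bigr\}$, one has, for each fixed $k$,
\[
\sup_{x\in S^{d-1}}\ \sup_{|I|\le k}\Bigl(\sum_{i\in I}\langle x,Y_i\rangle^2\Bigr)^{1/2}
=\sup_{(x,\alpha)\in S^{d-1}\times U_{n,k}}\sum_{i=1}^n\alpha_i\langle x,Y_i\rangle .
\]
For fixed $(x,\alpha)$ the right-hand side is a sum of independent centred terms with $\|\alpha_i\langle x,Y_i\rangle\|_{\psi_1}\le|\alpha_i|$ and $\sum_i\alpha_i^2\le1$, so by the Bernstein inequality behind Lemma~\ref{lem:psionenorm} it has a mixed sub-Gaussian/sub-exponential tail; crucially, this Bernstein bound shows that its $\psi_1$-norm is $O(1)$ (not $O(\sqrt k)$, as the triangle inequality for $\|\cdot\|_{\psi_1}$ would give), which is what keeps the final additive term linear in $t$. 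The increments of the process over $S^{d-1}\times U_{n,k}$ are then controlled by an $\ell_2$-type metric (the sub-Gaussian regime) and an $\ell_\infty$-type metric (the sub-exponential regime), and I would apply a two-metric generic chaining tail bound --- Talagrand's theory of Bernstein-type processes, equivalently Dirksen's bound --- together with estimates of the corresponding $\gamma_2$ and $\gamma_1$ functionals of $S^{d-1}\times U_{n,k}$.

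The heart of the matter is the $\gamma_1$-estimate, which must yield the factor $\sqrt k\,\log(en/k)$ rather than $k\log(en/k)$. The mechanism is the order-statistic structure of the problem: for a fixed direction $x$, the maximizing set $I$ consists of the indices of the $k$ largest $|\langle x,Y_i\rangle|$, the $j$-th largest of which is of sub-exponential size $\lesssim\log(en/j)$ with high probability; squaring and summing over $j=1,\dots,k$ gives $\sum_{j\le k}\log^2(en/j)\lesssim k\log^2(en/k)$, i.e. $\sqrt k\,\log(en/k)$. A naive union bound over the $\binom nk$ choices of $I$ would instead spend a full $\log(en/k)$ budget on all $k$ coordinates simultaneously and only give $k\log(en/k)$; the chaining (and the accompanying decoupling of the data-dependent support selection from the direction) is precisely what exploits that the large coordinates are few and the remaining ones small. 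Uniformity over the direction $x$ is incorporated by noting that, for a fixed $I$, $\sup_x(\sum_{i\in I}\langle x,Y_i\rangle^2)^{1/2}$ is the operator norm of the $k\times d$ matrix with rows $(Y_i)_{i\in I}$, which only probes a subspace of dimension $\le k$; this prevents the ambient dimension $d$ from entering, and the one genuinely unavoidable large deviation of a single summand is accounted for by the term $\max_i\|Y_i\|$, the coarsest link of the chain.

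Combining the two-metric chaining tail bound for fixed $k$ with the $\gamma_2,\gamma_1$ estimates above gives, for each $k$ and each $t>0$, the asserted inequality with probability at least $1-c_1'\exp(-t)$: the $\ell_2$-diameter of the process being $O(1)$ contributes the $\sqrt t\le\tfrac12(1+t)$ term and the $\psi_1$-diameter being $O(1)$ contributes the $t$ term, while $\gamma_2\lesssim\max_i\|Y_i\|$ and $\gamma_1\lesssim\sqrt k\log(en/k)$. Finally, uniformity over all $k=1,\dots,n$ follows by proving the bound at the dyadic values $k\in\{1,2,4,\dots\}$ and taking a union bound over the $O(\log n)$ of them (the extra $\log\log n$ factor is absorbed into the constants, and a general $k$ is sandwiched between consecutive dyadic values by monotonicity of the supremum). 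I expect the main obstacle to be exactly the chaining that simultaneously produces the $\sqrt k$ rather than $k$ dependence and remains uniform in $x$ and in $k$: isolating the order-statistic gain requires running the chaining at the right family of scales and carefully decoupling the data-dependent index set $I(x)$ from the realization of $(\langle x,Y_i\rangle)_i$ on it, which is the technically delicate point in \cite{adamczak2010quantitative, Talagrand2014}.
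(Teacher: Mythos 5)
The paper does not prove this lemma itself: it is quoted verbatim as Proposition~9.4.2 of \cite{Talagrand2014} (whose proof in turn follows \cite{adamczak2010quantitative}) and used as a black box. Your sketch is a reasonable high-level reconstruction of that cited argument --- linearizing the square root over $\alpha\in U_{n,k}$, Bernstein-type increment control, the order-statistics mechanism that turns $k\log(en/k)$ into $\sqrt k\log(en/k)$, and a dyadic union bound over $k$ are all real ingredients of the proof --- but it contains two points that would not survive if you actually tried to run the ``black-box two-metric chaining'' you invoke.

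First, the claim ``$\gamma_2\lesssim\max_i\|Y_i\|$'' is ill-formed: $\gamma_2$ is a deterministic functional of the index set with a metric, and cannot be bounded by a random, data-dependent quantity. In the cited proof, $\max_i\|Y_i\|$ arises structurally --- as the contribution of a single coordinate (the supremum over $\alpha$ supported on one index, where $\sup_{x}\langle x,Y_i\rangle=\|Y_i\|$), i.e.\ the coarsest link of the chain --- not as a $\gamma_2$ estimate. Second, and more substantively, if one takes the natural metrics $d_2$ and $d_1$ induced by the Bernstein bound on $S^{d-1}\times U_{n,k}$ and plugs them into a Dirksen/Talagrand Bernstein-chaining theorem, the $\gamma_2(S^{d-1},d_2)$ contribution scales like $\sqrt d$, which is dimension-\emph{dependent} and must somehow be absorbed by $\max_i\|Y_i\|$; your remark that for a fixed $I$ the direction $x$ only probes a subspace of dimension at most $k$ is the right intuition for why this works, but it is not an input that a black-box chaining theorem over the product index set $S^{d-1}\times U_{n,k}$ accepts. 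Integrating that observation into the chaining --- decoupling the data-dependent support selection from the direction, then chaining at the right family of scales --- is precisely the bespoke construction in \cite{adamczak2010quantitative, Talagrand2014}, and it is the part you explicitly defer. So this is a faithful outline of the cited proof but not a self-contained argument, with the quantitative heart (producing $\sqrt k\log(en/k)$ uniformly in $x$ and $k$, and hiding $\sqrt d$ inside $\max_i\|Y_i\|$) left to the references. Also, a small inaccuracy: the summands $\alpha_i\langle x,Y_i\rangle$ are not centred under the stated hypothesis (only $\|\langle x,Y\rangle\|_{\psi_1}\le1$ is assumed); the deterministic shift is $O(\sqrt k)$ and harmlessly absorbed, but the Bernstein bound should be stated for the deviation from the mean.
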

Our first observation is that the above bound does not depend on $d$. Moreover, the distribution of $Y$ is not necessarily isotopic. Thus, we can adapt this result to our case. Recall that by \eqref{eq:logconcave} for a zero mean, log-concave vector $X$, we have for any $y \in S^{d - 1}$, $
 \|\langle y, X\rangle\|_{\psi_1} \le \kappa\sqrt{\|\Sigma\|}.
$
Thus, denoting $Y = \frac{1}{\kappa\sqrt{\|\Sigma\|}}X$, we have $\|\langle x, Y\rangle\|_{\psi_1} \le 1$ for all $x \in S^{d - 1}$. By Lemma \ref{lem:pickypart}, with probability at least $1 - c_1\exp(-t)$, it holds that
\begin{equation}
\label{eq:leadingcoordinates}
\sup\limits_{y \in S^{d - 1}}\sup\limits_{\substack{I \subseteq \{1, \ldots, n\}, \\ |I|\le k}}\left(\sum\limits_{i \in I}\langle y, X_i \rangle^2\right)^{\frac{1}{2}} \le c_2\max\limits_{i}\|X_i\|  + c_2\kappa\sqrt{\|\Sigma\|}\left(\sqrt{k}\log\left(\frac{en}{k}\right) + t\right).
\end{equation}
Throughout the proof we choose $t = \left(\mathbf{r}(\Sigma)n\right)^{1/4}$.
From now on we can follow the arguments in \cite{adamczak2010quantitative, Talagrand2014} with several modifications needed to take the effective rank into account. Observe that by \eqref{eq:whpcontrol}, with probability at least 
\[
1 - n\exp(-2\left(\mathbf{r}(\Sigma)n\right)^{1/4}) \ge 1 - \exp(-\left(\mathbf{r}(\Sigma)n\right)^{1/4}) ,
\] 
it holds that
\begin{equation}
\label{eq:normcontrol}
\max_i\|X_i\| \le eC(1 + 4\kappa)\left(\|\Sigma\|^2\mathbf{r}(\Sigma)n\right)^{1/4},
\end{equation}
where  we used that $\sqrt{\tr(\Sigma)} = \sqrt{\|\Sigma\|\mathbf{r}(\Sigma)} \le \left(\|\Sigma\|^2\mathbf{r}(\Sigma)n\right)^{1/4}$ for $n \ge \mathbf{r}(\Sigma)$. Denote
\[
m = \sup\nolimits_{v \in S^{d - 1}}|\{i \in \{1, \ldots, n\}: \lambda\langle v, X_i \rangle^2 > 1\}|.
\] 

\paragraph{Case 1.} On the event where \eqref{eq:leadingcoordinates} and \eqref{eq:normcontrol} hold, we have that $m \le \frac{\mathbf{r}(\Sigma)}{\log^2\left(\frac{e^2n}{\mathbf{r}(\Sigma)}\right)}$. Observe that $m \mapsto \sqrt{m}\log\left(\frac{e^2n}{m}\right)$ is increasing for $1 \le m \le n$ and since $n \ge \mathbf{r}(\Sigma)$,
\[
\sqrt{m}\log\left(\frac{en}{m}\right) \le \sqrt{\mathbf{r}(\Sigma)} + \sqrt{\mathbf{r}(\Sigma)}\frac{\log\log^2\left(\frac{e^2n}{\mathbf{r}(\Sigma)}\right)}{\log\left(\frac{e^2n}{\mathbf{r}(\Sigma)}\right)} \le 2\sqrt{\mathbf{r}(\Sigma)} \le 2\left(n\mathbf{r}(\Sigma)\right)^{1/4}.
\]
Thus, on the same event \eqref{eq:leadingcoordinates} and \eqref{eq:normcontrol} imply
\begin{align*}
\sup\limits_{v \in S^{d - 1}}\sum\limits_{i = 1}^n\langle v, X_i\rangle^2\ind{\lambda\langle v, X_i\rangle^2 > 1} &\le \sup\limits_{v \in S^{d - 1}}\sup\limits_{\substack{I \subseteq \{1, \ldots, n\}, \\ |I|\le m}}\left(\sum\limits_{i \in I}\langle v, X_i \rangle^2\right) \\\
&\le c_2^2\left(\max\limits_{i}\|X_i\|  + 3\kappa\sqrt{\|\Sigma\|}\left((n\mathbf{r}(\Sigma))^{1/4}\right)\right)^{2} 
\\
&\le c_3\|\Sigma\|\sqrt{n \mathbf{r}(\Sigma)},
\end{align*}
where $c_3 > 0$ is an absolute constant. 

\paragraph{Case 2.} Otherwise, we assume $m > \frac{\mathbf{r}(\Sigma)}{\log^2\left(\frac{e^2n}{\mathbf{r}(\Sigma)}\right)}$.
By \eqref{eq:leadingcoordinates}, \eqref{eq:normcontrol} and the union bound, we have, with probability at least $1 - (1 + c_1)\exp(-(\mathbf{r}(\Sigma)n)^{1/4})$,
\begin{equation}
\label{eq:boundonm}
\sqrt{m\lambda^{-1}} \le ec_2C(1 + 2\kappa)\left(\|\Sigma\|^2\mathbf{r}(\Sigma)n\right)^{1/4}  + c_2\kappa\sqrt{\|\Sigma\|}\left(\sqrt{m}\log\left(\frac{en}{m}\right) + (\mathbf{r}(\Sigma)n)^{1/4}\right).
\end{equation}
Assume that the sample size $n$ is large enough, so that $\lambda^{-1} \ge 4\left(c_2\kappa\sqrt{\|\Sigma\|}\log\left(\frac{en}{m}\right)\right)^2$. Solving \eqref{eq:boundonm} we get, in particular, that
\begin{equation}
\label{eq:secondboundonm}
m\log^2\left(\frac{en}{m}\right) \le \frac{c_4\sqrt{\mathbf{r}(\Sigma)n}}{4c_2^2\kappa^2} \quad \text{and} \quad c_2\kappa\sqrt{\|\Sigma\|}\sqrt{m}\log\left(\frac{en}{m}\right) \le c_5\left(\|\Sigma\|^2\mathbf{r}(\Sigma)n\right)^{1/4},
\end{equation}
where $c_4, c_5 > 0$ are absolute constants. Combining \eqref{eq:leadingcoordinates}, \eqref{eq:secondboundonm} and dividing both sides by $n$, we show that for some $c_6 > 0$, on the corresponding event
\[
\sup\limits_{v \in S^{d - 1}}\frac{1}{n}\sum\limits_{i = 1}^n\langle v, X_i\rangle^2\ind{\lambda\langle v, X_i\rangle^2 > 1} \le \sup\limits_{v \in S^{d - 1}}\sup\limits_{\substack{I \subseteq \{1, \ldots, n\}, \\ |I|\le m}}\left(\frac{1}{n}\sum\limits_{i \in I}\langle v, X_i \rangle^2\right) \le c_6\|\Sigma\|\sqrt{\frac{\mathbf{r}(\Sigma)}{n}}.
\]
It is only left to check that $\lambda^{-1} \ge 4\left(c_2\kappa\sqrt{\|\Sigma\|}\log\left(\frac{en}{m}\right)\right)^2$, which is
\begin{equation}
\label{eq:needtocheck}
\sqrt{\frac{n(8\kappa)^4}{2\mathbf{r}(\Sigma)}} \ge 4\left(c_2\kappa\log\left(\frac{en}{m}\right)\right)^2.
\end{equation}
Since we assumed that $m > \frac{\mathbf{r}(\Sigma)}{\log^2\left(\frac{e^2n}{\mathbf{r}(\Sigma)}\right)}$,  one can easily show that if $n \ge c_7\mathbf{r}(\Sigma)$ for some $c_7 > 0$, then the inequality \eqref{eq:needtocheck} holds. Indeed, $x \mapsto \sqrt{x}$ grows faster than $x \mapsto \log^2(x)$.
We proved the upper tail bound.

Although one can prove the lower tail bound completely analogously, we discuss an alternative argument.
For the same value of $\lambda$, using the definition of the truncation function and since $\langle v, X\rangle^2 \ge 0$, we have, with probability at least $1 - 2\exp(-\mathbf{r}(\Sigma))$,
\begin{align*}
\sup\limits_{v \in S^{d - 1}}\left(\E\langle v, X\rangle^2 - \frac{1}{n}\sum\limits_{i = 1}^n\langle v, X_i\rangle^2\right) &\le \sup\limits_{v \in S^{d - 1}}\left(\E\langle v, X\rangle^2 - \frac{1}{n\lambda}\sum\limits_{i = 1}^n\psi(\lambda\langle v, X_i\rangle^2) \right) 
\\&\le c \kappa^2\|\Sigma\|\sqrt{\frac{\mathbf{r}(\Sigma)}{n}}.
\end{align*}
Applying the union bound we finish the proof. We discuss a related argument in Section \ref{sec:lowertail}.
\qed
\subsubsection{Proof of Proposition \ref{cor:logconcavetheorem}}
There are only a few changes compared to the proof of Theorem \ref{thm:logconcavetheorem}. First, an analog of the norm bound \eqref{eq:normcontrol} is implied by our assumption. When applying Lemma \ref{lem:truncationlemma} to the second term in \eqref{eq:decomposition} we choose
\[
\lambda = \frac{1}{\|\Sigma\|}\sqrt{\frac{\mathbf{r}(\Sigma) + t}{n(8\kappa)^4}}.
\]
Our second modification is that we consider the cases: $m \le \frac{\mathbf{r}(\Sigma) + t}{\log^2\left(\frac{e^2n}{\mathbf{r}(\Sigma) + t}\right)}$ and $m > \frac{\mathbf{r}(\Sigma) + t}{\log^2\left(\frac{e^2n}{\mathbf{r}(\Sigma) + t}\right)}$. In the first case $
\sqrt{m}\log\left(\frac{en}{m}\right) \le 2\sqrt{\mathbf{r}(\Sigma) + t}.
$
In this case, with probability at least $1 - c_1\exp(-t)$, it holds that
\[
\sup\limits_{v \in S^{d - 1}}\sum\limits_{i = 1}^n\langle v, X_i\rangle^2\ind{\lambda\langle v, X_i\rangle^2 > 1} \le c_2^2\kappa^2\|\Sigma\|\left((\mathbf{r}(\Sigma)n)^{\frac{1}{4}} + 2\sqrt{\mathbf{r}(\Sigma) + t} + t\right)^2,
\]
Otherwise, if $m > \frac{\mathbf{r}(\Sigma) + t}{\log^2\left(\frac{e^2n}{\mathbf{r}(\Sigma) + t}\right)}$ following the same lines, we show that
\[
\sup\limits_{v \in S^{d - 1}}\frac{1}{n}\sum\limits_{i = 1}^n\langle v, X_i\rangle^2\ind{\lambda\langle v, X_i\rangle^2 > 1} \le c_3\kappa^2\|\Sigma\|\left(\sqrt{\mathbf{r}(\Sigma)n} + t^2\right),
\]
where $c_3 > 0$ is an absolute constant. When $m > \frac{\mathbf{r}(\Sigma) + t}{\log^2\left(\frac{e^2n}{\mathbf{r}(\Sigma) + t}\right)}$, the condition \eqref{eq:needtocheck} will be rewritten as
\[
\sqrt{\frac{n(8\kappa)^4}{\mathbf{r}(\Sigma) + t}} \ge 4\left(c_2\kappa\log\left(\frac{en}{m}\right)\right)^2,
\]
which implies that it is sufficient to take $n \ge c_4(\mathbf{r}(\Sigma) + t)$. Combining these bounds as in the proof of Theorem \ref{thm:logconcavetheorem}, we prove the claim.
\qed
\subsubsection{Proof of Theorem \ref{thm:tensortheorem}}
\paragraph{The log-concave case.}
The proof in the log-concave case is quite similar to the proof of Theorem \ref{thm:logconcavetheorem}. Let us start with an upper tail. Let $\lambda_1 > 0$ be a fixed truncation level. As above, we write the following decomposition:
\begin{align*}
&\sup\limits_{v \in S^{d - 1}}\left(\frac{1}{n}\sum\limits_{i = 1}^n\langle v, X_i\rangle^s - \E\langle v, X\rangle^s\right)
\\
&\le \sup\limits_{v \in S^{d - 1}}\frac{1}{n}\sum\limits_{i = 1}^n|\langle v, X_i\rangle|^s\ind{\lambda_1|\langle v, X_i\rangle|^s > 1} +  \sup\limits_{v \in S^{d - 1}}\left(\frac{1}{n\lambda_1}\sum\limits_{i = 1}^n\psi(\lambda_1\langle v, X_i\rangle^s) - \E\langle v, X\rangle^s\right).
\end{align*}
The analysis of the second term will follow from Lemma \ref{lem:truncationlemma}. For $\eta = 4s\kappa$, defining 
$
\lambda_1 = \sqrt{\frac{2\mathbf{r}(\Sigma)}{n(4s\kappa)^{2s}\|\Sigma\|^s}},
$
we have, with probability at least $1 - 2\exp(-\mathbf{r}(\Sigma))$,
\[
\sup\limits_{v \in S^{d - 1}}\left(\frac{1}{n\lambda_1}\sum\limits_{i = 1}^n\psi(\lambda_1\langle v, X_i\rangle^2) - \E\langle v, X\rangle^2\right) \le C_s(4s\kappa)^s\|\Sigma\|^{s/2}\sqrt{\frac{\mathbf{r}(\Sigma) + t}{n}},
\]
where $C_s$ depends only on $s$.
Observe that $\lambda_1|\langle v, X_i\rangle|^s > 1$ is equivalent to $\langle v, X_i\rangle^2 > \lambda_1^{-2/s}$. Denoting $\lambda^{-1} = \lambda_1^{-2/s}$, we repeat the arguments of the proof of Theorem \ref{thm:logconcavetheorem} for the term
\[
\sup\limits_{v \in S^{d - 1}}\frac{1}{n}\sum\limits_{i = 1}^n\langle v, X_i\rangle^2\ind{\lambda\langle v, X_i\rangle^2 > 1}
\]
with just two modifications: We use instead that, with probability at least $1 - n\exp(-\sqrt{\mathbf{r}(\Sigma)})$,
\[
\max_i\|X_i\| \le eC(1 + 2\kappa)\sqrt{\tr(\Sigma)},
\]
and choose $t = \sqrt{\mathbf{r}(\Sigma)}$ when applying Lemma \ref{lem:pickypart} in the form \eqref{eq:leadingcoordinates}. The same lines imply that if $\lambda_1^{-2/s} \ge 4\left(c_2\kappa\sqrt{\|\Sigma\|}\log\left(\frac{en}{m}\right)\right)^2$ for $m \ge \frac{\mathbf{r}(\Sigma)}{\log^2\left(\frac{e^2n}{\mathbf{r}(\Sigma)}\right)}$, which is
\begin{equation}
\label{eq:secondconditiononn}
\left(\frac{n(4s\kappa)^{2s}\|\Sigma\|^s}{2\mathbf{r}(\Sigma)}\right)^{\frac{1}{s}}\ge 4\left(c_2\kappa\sqrt{\|\Sigma\|}\log\left(\frac{en}{m}\right)\right)^2,
\end{equation}
then, with probability at least $1 - (n + c_1)\exp(-\sqrt{\mathbf{r}(\Sigma)})$,
\begin{align*}
\sup\limits_{v \in S^{d - 1}}\left(\sum\limits_{i = 1}^n|\langle v, X_i\rangle|^s\ind{|\langle v, X_i\rangle|^s > \lambda_1^{-1}}\right)^{\frac{1}{s}} &\le \sup\limits_{v \in S^{d - 1}}\left(\sum\limits_{i = 1}^n\langle v, X_i\rangle^2\ind{\langle v, X_i\rangle^2 > \lambda^{-1}}\right)^{\frac{1}{2}}
\\
&\le c_8\sqrt{\|\Sigma\|\mathbf{r}(\Sigma)},
\end{align*}
where $c_8$ is an absolute constant.
This inequality will contribute the term $\left(c_8\sqrt{\|\Sigma\|\mathbf{r}(\Sigma)}\right)^s$ to the final bound. For this term to be consistent with our final bound we need 
\[
\frac{(c_8\sqrt{\|\Sigma\|\mathbf{r}(\Sigma)})^s}{n} \le \|\Sigma\|^{s/2}\sqrt{\frac{\mathbf{r}(\Sigma)}{n}}.
\]
It is easy to verify that both the last inequality and  \eqref{eq:secondconditiononn} are satisfied when \eqref{eq:assumptiononn} is satisfied. Indeed, in \eqref{eq:secondconditiononn} we are essentially comparing $\left(\frac{n}{\mathbf{r}(\Sigma)}\right)^{\frac{1}{s}}$ with $\log^2\left(\frac{en}{\mathbf{r}(\Sigma)}\right)$, which implies the need for an $s$-dependent factor $c_s$ in \eqref{eq:assumptiononn}.

The analysis of the lower tail can be done analogously. Indeed, we may repeat the analysis for the following decomposition:
\begin{align*}
&\sup\limits_{v \in S^{d - 1}}\left(\E\langle v, X\rangle^s - \frac{1}{n}\sum\limits_{i = 1}^n\langle v, X_i\rangle^s\right)
\\
&\le \sup\limits_{v \in S^{d - 1}}\left(\frac{1}{n}\sum\limits_{i = 1}^n|\langle v, X_i\rangle|^s\ind{\lambda_1|\langle v, X_i\rangle|^s > 1}\right) +  \sup\limits_{v \in S^{d - 1}}\left(\E\langle v, X\rangle^s - \frac{1}{n\lambda_1}\sum\limits_{i = 1}^n\psi(\lambda_1\langle v, X_i\rangle^s)\right).
\end{align*}
The second part of the claim follows. 
\paragraph{The sub-Gaussian case.} We mainly follow the previous proof with several modifications. First, when applying Lemma \ref{lem:truncationlemma} we can use $\eta = 3\kappa\sqrt{2s}$ as discussed above. Second, we use (see \cite[Lemma 2.7.7]{Vershynin2016HDP})
\[
\sqrt{\log 2}\|\langle y, X\rangle\|_{\psi_1} \le \|\langle y, X\rangle\|_{\psi_2} \le \kappa\sqrt{\|\Sigma\|},
\]
and repeat the same lines with \eqref{eq:normcontrol} replaced by \eqref{eq:normswhpbound} to achieve our bound.

Our final observation is that if $n$ is large, then, with high probability,
$
\lambda\max_{i}|\langle v, X_i\rangle|^{s} \le \lambda\max_{i}\|X_i\|^{s} \le 1,
$
where $\lambda$ is chosen as in Lemma \ref{lem:truncationlemma}. On the corresponding event, we have 
\begin{align*}
\sup\limits_{v \in S^{d - 1}}\left|\frac{1}{n}\sum\limits_{i = 1}^n\langle v, X_i\rangle^s - \E\langle v, X\rangle^s\right| &= \sup\limits_{v \in S^{d - 1}}\left|\frac{1}{n\lambda}\sum\limits_{i = 1}^n\psi(\lambda\langle v, X_i\rangle^s) - \E\langle v, X\rangle^s\right|
\\
&\le C_s(3\kappa\sqrt{2s})^s\|\Sigma\|^{s/2}\sqrt{\frac{2\mathbf{r}(\Sigma)}{n}},
\end{align*}
where $C_s$ depends only on $s$.
By \eqref{eq:normswhpbound}, with probability at least $1 - n\exp(-\mathbf{r}(\Sigma))$, we have $\lambda\max_{i}\|X_i\|^{s} \le 1$ when
\[
\sqrt{\frac{2\mathbf{r}(\Sigma)}{n(3\kappa\sqrt{2s})^{2s}\|\Sigma\|^s}} \left(108\kappa^2\tr(\Sigma)\right)^{\frac{s}{2}} \le 1,
\]
Solving this with respect to $n$, we see that it is enough to take $n \ge c_9(\mathbf{r}(\Sigma))^{s + 1}$, where $c_9 > 0$ is an absolute constant.
Applying the union bound, we complete the proof of the statement.
\qed
\begin{remark}
To improve the tail estimate in the Gaussian case for smaller values of $n$, one needs to prove a $\psi_2$-version of Lemma \ref{lem:pickypart}. In our analysis we simply used that $\sqrt{\ln 2}\|\langle y, X\rangle\|_{\psi_1} \le \|\langle y, X\rangle\|_{\psi_2}$.
\end{remark}

\section{Additional results}
\label{sec:additionalresults}
We provide two results extending the ideas used in Theorem \ref{thm:maintheorem} and Theorem \ref{thm:tensortheorem}. 
\subsection{Deviations of the norm of sub-exponential random vectors}
First, assume that $X$ is a zero mean random vector with covariance $\Sigma$ satisfying 
\begin{equation}
\label{eq:subexponential}
\|\langle X, y \rangle\|_{\psi_1} \le \kappa\sqrt{y^{\top}\Sigma y},
\end{equation}
for all $y \in \mathbb{R}^d$. For example, the aforementioned result shows that the assumption \eqref{eq:subexponential} is implied by the log-concavity assumption. We prove the following result, which improves the bound of Alesker \cite{alesker1995psi} on the concentration of the Euclidean norm of a point sampled uniformly at random from a convex body in the isotropic position. Our proof is shorter and does not involve any explicit calculations in polar coordinates. This bound can also be seen as a special instance of the recent result of Lata{\l}a and Nayar \cite{latala2020hadamard} relating the weak and strong moments of random vectors. Although their result is not limited to the sub-exponential assumption \eqref{eq:subexponential}, our bound has the advantage of being dimension-free.

\begin{proposition}
\label{lem:subexponential}
Assume that $X$ is a zero mean sub-exponential random vector \eqref{eq:subexponential}.
Then, for any $t \ge 1$, with probability at least $1 - \exp(-t)$,
\[
\|X\| \le 8\kappa(\sqrt{t\tr(\Sigma)} + t\sqrt{\|\Sigma\|}).
\]
\end{proposition}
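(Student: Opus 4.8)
Following the proof of Lemma~\ref{lem:concentrationofthenorm}, the plan is to write $\|X\| = \sup_{u \in S^{d-1}}\langle X,u\rangle$ and apply Lemma~\ref{lem:pacbayes} with $n=1$ to the linear function $f(X,\theta) = \lambda\langle X,\theta\rangle$, with a Gaussian prior and posterior on $\Theta = \mathbb{R}^d$. The only real difference from the sub-Gaussian case is that the exponential-moment estimate now comes from Lemma~\ref{lem:psionenorm}: conditionally on $\theta$, since $\E X = 0$ and $\|\langle X,\theta\rangle\|_{\psi_1} \le \kappa\sqrt{\theta^\top\Sigma\theta}$ by \eqref{eq:subexponential},
\[
\log\E_X\exp(\lambda\langle X,\theta\rangle) \le 4\lambda^2\kappa^2\,\theta^\top\Sigma\theta \qquad \text{whenever}\quad |\lambda| \le \frac{1}{2\kappa\sqrt{\theta^\top\Sigma\theta}}.
\]
Unlike in the sub-Gaussian case, this bound (and even finiteness of $\E_X\exp(\lambda\langle X,\theta\rangle)$) holds only under a restriction on $\lambda$ that depends on $\theta$, so the untruncated Gaussian posterior of Lemma~\ref{lem:concentrationofthenorm} is not admissible. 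This is the main obstacle.

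To get around it I would truncate the posterior exactly as in the proof of Theorem~\ref{thm:maintheorem}, except that the truncation region must be an \emph{ellipsoid associated with $\Sigma$} rather than a Euclidean ball. Fix $\beta>0$, let the prior $\mu$ be $\gaussdist(0,\beta^{-1}I_d)$, and for $u\in S^{d-1}$ let $\rho_u$ be $\gaussdist(u,\beta^{-1}I_d)$ restricted to $\{\theta : (\theta-u)^\top\Sigma(\theta-u) \le 2\beta^{-1}\tr(\Sigma)\}$. Since this set is symmetric about $u$, the symmetry argument behind \eqref{eq:expectation} still gives $\E_{\rho_u}\theta = u$ exactly (no bias is introduced); since $(\theta-u)^\top\Sigma(\theta-u)$ has mean $\beta^{-1}\tr(\Sigma)$ under $\gaussdist(0,\beta^{-1}I_d)$ --- a dimension-free quantity --- Markov's inequality keeps the retained mass $p \ge \tfrac12$, whence $\mathcal{KL}(\rho_u,\mu) \le \log 2 + \beta/2$, just as in Theorem~\ref{thm:maintheorem}. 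On the support of $\rho_u$ one then has the almost sure bound $\theta^\top\Sigma\theta \le 2\cdot 2\beta^{-1}\tr(\Sigma) + 2\|\Sigma\| = 4\beta^{-1}\tr(\Sigma) + 2\|\Sigma\|$, and, since $\theta^\top\Sigma\theta\ge 0$, also $\E_{\rho_u}[\theta^\top\Sigma\theta] \le \tfrac1p\bigl(u^\top\Sigma u + \beta^{-1}\tr(\Sigma)\bigr) \le 2\bigl(\|\Sigma\| + \beta^{-1}\tr(\Sigma)\bigr)$.

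With these ingredients I would take $\lambda = \bigl(2\kappa\sqrt{4\beta^{-1}\tr(\Sigma)+2\|\Sigma\|}\,\bigr)^{-1}$, the largest value for which the displayed estimate is valid uniformly over all $\theta$ in the support of $\rho_u$; feeding $f(X,\theta)=\lambda\langle X,\theta\rangle$ into Lemma~\ref{lem:pacbayes} (with $n=1$) gives $\E_{\rho_u}\log\E_X\exp(\lambda\langle X,\theta\rangle) \le 8\lambda^2\kappa^2(\|\Sigma\|+\beta^{-1}\tr(\Sigma))$, and since $\lambda\langle X,u\rangle = \lambda\E_{\rho_u}\langle X,\theta\rangle$ by linearity, Lemma~\ref{lem:pacbayes} yields, simultaneously over $u\in S^{d-1}$ and hence for $\|X\|=\sup_u\langle X,u\rangle$, with probability at least $1-\exp(-t)$,
\[
\|X\| \le 8\lambda\kappa^2\bigl(\|\Sigma\| + \beta^{-1}\tr(\Sigma)\bigr) + \frac{\log 2 + \beta/2 + t}{\lambda}.
\]
Substituting the chosen $\lambda$, using $\|\Sigma\|+\beta^{-1}\tr(\Sigma) \le 4\beta^{-1}\tr(\Sigma)+2\|\Sigma\| \le 4(\|\Sigma\|+\beta^{-1}\tr(\Sigma))$ and $\sqrt{\|\Sigma\|+\beta^{-1}\tr(\Sigma)} \le \sqrt{\|\Sigma\|}+\sqrt{\beta^{-1}\tr(\Sigma)}$, the right-hand side is at most $C\kappa\bigl(\sqrt{\beta^{-1}\tr(\Sigma)}+\sqrt{\|\Sigma\|}\bigr)(1+\beta+t)$ for an absolute $C$. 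Choosing $\beta=t$ and using $t\ge 1$ (so $1+\beta+t\le 3t$ and $t\sqrt{\beta^{-1}\tr(\Sigma)}=\sqrt{t\tr(\Sigma)}$) collapses this to a bound of the form $C'\kappa\bigl(\sqrt{t\tr(\Sigma)} + t\sqrt{\|\Sigma\|}\bigr)$; tracking the numerical constants carefully (or optimizing $\beta$, e.g. $\beta=\min\{t,\sqrt{t\,\mathbf{r}(\Sigma)}\}$) recovers the stated constant $8$.

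The only genuinely new point relative to Lemmas~\ref{lem:concentrationofthenorm} and~\ref{lem:psionenorm} and the proof of Theorem~\ref{thm:maintheorem} is this: in the sub-Gaussian case the exponential-moment bound holds for every $\lambda$, so no truncation is needed; here the sub-exponential bound forces a $\theta$-dependent restriction on $\lambda$, and a naive Euclidean-ball truncation retaining constant mass would have radius of order $\sqrt{d/\beta}$, re-introducing the dimension through $\theta^\top\Sigma\theta \le \|\Sigma\|\,\|\theta\|^2$. Truncating instead to the level set $\{(\theta-u)^\top\Sigma(\theta-u)\le 2\beta^{-1}\tr(\Sigma)\}$ of the very quantity $\theta^\top\Sigma\theta$ that has to be controlled keeps everything dimension-free while preserving $\E_{\rho_u}\theta=u$.
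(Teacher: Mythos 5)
Your argument is correct and is essentially the paper's own proof in different coordinates: the paper takes the prior and posterior to be Gaussians with covariance $\beta^{-1}\Sigma$, truncates the posterior to a Euclidean ball of radius $\sqrt{2\beta^{-1}\tr(\Sigma)}$, and tests against $\lambda\langle X,\Sigma^{-1/2}\theta\rangle$, which is exactly the image of your ellipsoid-truncated isotropic construction under the map $\theta\mapsto\Sigma^{1/2}\theta$ (same KL bound $\log 2+\beta/2$, same uniform control of $\theta^{\top}\Sigma\theta$ on the support, same choices $\lambda$ maximal and $\beta\asymp t$). The only cosmetic difference is that you bound $\E_{\rho_u}[\theta^{\top}\Sigma\theta]$ via the $1/p$ trick where the paper uses the almost-sure bound on the support; both give the stated rate, with the constant $8$ recovered after absorbing the $t$-independent terms using $t\ge 1$.
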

\begin{proof}
We follow the proof of Theorem \ref{thm:maintheorem} with a different choice of $\beta$. To control the norm, it is enough to upper bound $\langle X, v\rangle$ uniformly over $v \in S^{d -1}$. We assume without loss of generality that $\Sigma$ is invertible.
Set $\Theta = \mathbb{R}^d$. Let the prior distribution $\mu$ be the multivariate Gaussian distribution with mean zero and covariance $\beta^{-1}\Sigma$. For $u \in \Sigma^{1/2}S^{d - 1}$ let the distribution $\rho_u$ be defined by the density function \eqref{eq:density}. Choosing $r = \sqrt{2\beta^{-1}\tr(\Sigma)}$, similarly to \eqref{kl:boundeq} we have 
\[
\mathcal{KL}(\rho_u, \mu) = \log2 + \beta/2.
\]
For $\lambda > 0$ we want to plug $\lambda \langle X, \Sigma^{-1/2}\theta\rangle$ into Lemma \ref{lem:pacbayes}, where $\theta$ is distributed according to $\rho_u$. Since $\|\theta\| \le \sqrt{\|\Sigma\|} + \sqrt{2\beta^{-1}\tr(\Sigma)}$ almost surely, we have, conditionally on $\theta$, 
\[
\| \langle X,  \Sigma^{-1/2}\theta\rangle\|_{\psi_1} \le \kappa\|\theta\| \le  \kappa(\sqrt{\|\Sigma\|} + \sqrt{2\beta^{-1}\tr(\Sigma)}).
\]
By Lemma \ref{lem:psionenorm}, conditionally on $\theta$,
\[
\log \E_X\exp(\lambda \langle X,  \Sigma^{-1/2}\theta\rangle) \le 4\lambda^2\kappa^2\|\theta\|^2 \le 4\lambda^2\kappa^2(\sqrt{\|\Sigma\|} + \sqrt{2\beta^{-1}\tr(\Sigma)})^2,
\]
whenever $\lambda \le \frac{1}{2\kappa(\sqrt{\|\Sigma\|} + \sqrt{2\beta^{-1}\tr(\Sigma)})}$. We choose $\lambda = \frac{1}{2\kappa(\sqrt{\|\Sigma\|} + \sqrt{2\beta^{-1}\tr(\Sigma)})}$. By Lemma \ref{lem:pacbayes} we have for all $v \in S^{d - 1}$, with probability at least $1 - \exp(-t)$,
\[
\langle X, v\rangle \le 4\lambda\kappa^2(\sqrt{\|\Sigma\|} + \sqrt{2\beta^{-1}\tr(\Sigma)})^2 + \frac{\log 2 + \beta/2 + t}{\lambda}.
\]
Let $\beta = 2t$. On the same event we have, using $t \ge 1$,
\begin{align*}
\|X\| \le (2 + 2\log2)\kappa(\sqrt{\|\Sigma\|} + \sqrt{\tr(\Sigma)}) + 4\kappa(t\sqrt{\|\Sigma\|} + \sqrt{t\tr(\Sigma)}).
\end{align*}
The claim follows.
\end{proof}

The result of Proposition \ref{lem:subexponential} implies the bound of Alesker \cite{alesker1995psi} in the isotopic case (in this case, $\Sigma = I_d$). Importantly, the bound in \cite{alesker1995psi} scales as $\sqrt{td}$ for all values of $t$. An inspection of their proof shows that this happens because for $t \ge d$ one can use the boundedness of a unit volume convex body in the isotropic position. 

Let us shortly explain why the result of Proposition \ref{lem:subexponential} cannot be improved in general. Consider the isotropic case  $\Sigma = I_d$. Using the standard tools, one can rewrite the bound of Proposition \ref{lem:subexponential} as follows: for $p \ge 1$, and some $c > 0$ that depends only on $\kappa$,
\begin{equation}
\label{eq:momentsbound}
\E (\|X\|^p)^{1/p} \le c(\sqrt{pd} + p).
\end{equation}
In \cite{latala2017some} the following example is considered: Let $X = \sqrt{d}ZU$, where $Z$ has the standard Gaussian distribution and $U$ is uniformly distributed on $S^{d - 1}$, independent of $Z$. It is easy to show that \eqref{eq:subexponential} is satisfied for $X$ with $\kappa$ being an absolute constant. In this case $\E(\|X\|^p)^{1/p}$ scales as $\sqrt{pd}$ up to multiplicative constant factors, which proves the necessity of the first term in \eqref{eq:momentsbound}. The necessity of the second term is implied by the case where $d =1$, and $X$ is a standard exponentially distributed random variable.

\subsection{A lower tail version of Theorem \ref{thm:maintheorem}}
\label{sec:lowertail}
It is known that the least singular value of a random matrix can be controlled with high probability guarantees (in fact, it has sub-Gaussian tails) under significantly milder assumptions. This can be seen as a high-dimensional extension of the following fact. For a non-negative random variable $Z$ and for any $t \ge 0$,
\begin{equation}
\label{eq:lowertail}
\Pr(\E Z - Z \ge t) \le \exp(-t^2/(2\E Z^2)).
\end{equation}
That is, the one-sided bound allows for sub-Gaussian concentration even if the random variable $Z$ has infinite moments after the first two. 
There is a list of dimension-dependent lower tail bounds for the least singular value under minimal assumptions \cite{srivastava2013covariance, oliveira2016lower, koltchinskii2015bounding, tikhomirov2016smallest, yaskov2014lower, catoni2016pac, mourtada2019exact}. Our next result improves the dimension-dependent bound of Oliveira \cite{oliveira2016lower} and complements the result of Theorem \ref{thm:maintheorem}. Although one can modify the original proof in \cite{oliveira2016lower}\footnote{This work is first to apply the variational principle to the analysis of the least singular value.} to get a dimension-free bound, we present a short and self-contained proof. 
\begin{proposition}
\label{prop:lowertail}
Assume that $M_1, \ldots M_n$ are independent copies of a positive semi-definite symmetric random matrix $M$ with mean $\E M = \Sigma$. Let $M$ satisfy for some $\kappa \ge 1$,
\begin{equation}
\label{eq:elltwoellone}
\sqrt{\E(x^{\top} M x)^2} \le \kappa^2\ x^{\top}\Sigma x,
\end{equation}
for all $x \in \mathbb{R}^d$.
Then, for any $t > \log 2$, with probability at least $1 - 2\exp(-t)$, it holds that
\[
\sup\limits_{v \in S^{d - 1}}\left(v^{\top}\Sigma v - \frac{1}{n}\sum\limits_{i = 1}^nv^{\top}M_iv\right) \le 7\kappa^2\|\Sigma\|\sqrt{\frac{\mathbf{r}(\Sigma) + t}{n}}.
\]
\end{proposition}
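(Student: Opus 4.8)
The plan is to reduce the statement, via a truncation that exploits positive‑semi‑definiteness, to the case $s=2$ of Lemma~\ref{lem:truncationlemma}. Fix a truncation level $\lambda>0$, to be chosen at the end. Since $M_i\mgeq 0$ we have $v^{\top}M_iv\geq 0$ for every $v$, and since $\psi(x)\leq x$ for $x\geq0$ this gives $\frac1\lambda\psi(\lambda\,v^{\top}M_iv)\leq v^{\top}M_iv$. Hence it suffices to prove the stronger uniform bound
\[
\sup_{v\in S^{d-1}}\Bigl(v^{\top}\Sigma v-\tfrac1{n\lambda}\sum_{i=1}^n\psi(\lambda\,v^{\top}M_iv)\Bigr)\ \leq\ 7\kappa^2\|\Sigma\|\sqrt{\tfrac{\mathbf r(\Sigma)+t}{n}}.
\]
Replacing $v^{\top}M_iv$ by the bounded quantity $\frac1\lambda\psi(\lambda\,v^{\top}M_iv)$ is precisely what makes the weak assumption \eqref{eq:elltwoellone} enough: all exponential moments appearing below remain finite.

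The displayed bound is (the lower‑tail half of) the $s=2$ instance of Lemma~\ref{lem:truncationlemma} with the rank‑one matrix $XX^{\top}$ replaced by a general PSD matrix $M$: for $s=2$ the hypothesis \eqref{eq:subexponentialassumption} of that lemma is exactly \eqref{eq:elltwoellone} (with $\eta=\kappa$), and $\|X\|^2=\tr(XX^{\top})$ is replaced by $\tr M$. I would therefore repeat its proof. Take the product posterior $\rho=\rho_{v,v}$ of two Gaussians centered at $v$ with covariance $\mathbf r(\Sigma)^{-1}I_d$, so that by independence of the two coordinates $\E_{\rho}[\theta_1^{\top}M_i\theta_2]=v^{\top}M_iv$ with no spread correction in the mean, while by the Cauchy–Schwarz inequality $(\theta_1^{\top}M_i\theta_2)^2\leq(\theta_1^{\top}M_i\theta_1)(\theta_2^{\top}M_i\theta_2)$,
\[
\E_{\rho}(\theta_1^{\top}M_i\theta_2)^2\leq\E_{\rho_1}[\theta_1^{\top}M_i\theta_1]\,\E_{\rho_2}[\theta_2^{\top}M_i\theta_2]=\bigl(v^{\top}M_iv+\mathbf r(\Sigma)^{-1}\tr M_i\bigr)^2 .
\]
Plugging the bounded influence function $f(M,\theta_1,\theta_2)=-\log\!\bigl(1+\lambda\,\theta_1^{\top}M\theta_2+\lambda^2(\theta_1^{\top}M\theta_2)^2\bigr)$ into Lemma~\ref{lem:pacbayes}, and using $-\log(1-x+x^2)\geq x-x^2$, $1/(1+y+y^2)\leq1-y+y^2$, the estimate $\E_M(\theta_1^{\top}M\theta_2)^2\leq\kappa^4(\theta_1^{\top}\Sigma\theta_1)(\theta_2^{\top}\Sigma\theta_2)$ (a further Cauchy–Schwarz step and \eqref{eq:elltwoellone}), and the lower‑tail analogue of Lemma~\ref{lem:almostconvex}, one obtains, simultaneously over $v\in S^{d-1}$ and with probability at least $1-e^{-t}$,
\[
v^{\top}\Sigma v-\tfrac1{n\lambda}\sum_{i=1}^n\psi(\lambda\,v^{\top}M_iv)\ \leq\ C_1\kappa^4\lambda\|\Sigma\|^2+\frac{\mathbf r(\Sigma)+t}{\lambda n}+\frac1{n\lambda}\sum_{i=1}^n\min\Bigl\{1,\frac{c_1\lambda^2(\tr M_i)^2}{\mathbf r(\Sigma)^2}\Bigr\}
\]
for absolute constants $c_1,C_1$ (the $\min\{1,\cdot\}$ term being the analogue of the one produced in Lemma~\ref{lem:truncationlemma}).

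The last sum is bounded by Bernstein's inequality \cite[Corollary~2.11]{boucheron2013concentration}: its summands lie in $[0,1]$, so the variance is at most the mean, and
\[
\E\min\Bigl\{1,\tfrac{c_1\lambda^2(\tr M_i)^2}{\mathbf r(\Sigma)^2}\Bigr\}\leq\frac{c_1\lambda^2\,\E(\tr M_i)^2}{\mathbf r(\Sigma)^2}\leq c_1\kappa^4\lambda^2\|\Sigma\|^2,
\]
where I use the \emph{dimension‑free} estimate $\E(\tr M_i)^2\leq\kappa^4(\tr\Sigma)^2$: writing $\tr M_i=\sum_j e_j^{\top}M_ie_j$ and combining the case $x=e_j$ of \eqref{eq:elltwoellone} with Cauchy–Schwarz gives $\E(\tr M_i)^2=\sum_{j,k}\E[(e_j^{\top}M_ie_j)(e_k^{\top}M_ie_k)]\leq\sum_{j,k}\kappa^4\Sigma_{jj}\Sigma_{kk}=\kappa^4(\tr\Sigma)^2$. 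Thus with probability at least $1-e^{-t}$ the last sum is at most $\frac1{\lambda}\bigl(2c_1\kappa^4\lambda^2\|\Sigma\|^2+\tfrac{7t}{6n}\bigr)$. Intersecting the two high‑probability events and choosing $\lambda=(C\kappa^2\|\Sigma\|)^{-1}\sqrt{(\mathbf r(\Sigma)+t)/n}$ for an appropriate absolute constant $C$ (admissible since $n\geq\mathbf r(\Sigma)+t$), collecting constants yields the bound with leading factor $7$; the total probability is $1-2\exp(-t)$, and the hypothesis $t>\log2$ only makes this probability positive.

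The main obstacle is exactly this last sum: under the weak assumption \eqref{eq:elltwoellone} the traces $\tr M_i$ — which play the role of $\|X_i\|^2$ in Lemma~\ref{lem:truncationlemma} — have only two finite moments, so $\frac1n\sum_i\tr M_i$ is not exponentially concentrated and one cannot avoid truncating it at level $1$ before invoking Bernstein. Positive‑semi‑definiteness enters twice and is essential: it yields $\psi(\lambda\,v^{\top}M_iv)/\lambda\leq v^{\top}M_iv$, so that only the lower tail survives the truncation, and it supplies the Cauchy–Schwarz bounds $(\theta_1^{\top}M_i\theta_2)^2\leq(\theta_1^{\top}M_i\theta_1)(\theta_2^{\top}M_i\theta_2)$ that keep the second‑order terms dimension‑free.
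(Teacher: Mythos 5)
Your high-level strategy is sound and is genuinely a different packaging of the same variational idea: replace $v^{\top}M_iv$ by $\psi(\lambda v^{\top}M_iv)/\lambda$ (using PSD and $\psi(x)\le x$ for $x\ge 0$), then run the $s=2$ mechanism of Lemma~\ref{lem:truncationlemma} with a product Gaussian posterior and control the spread via Bernstein. The paper instead uses a \emph{single} Gaussian posterior $\rho_v=\mathcal N(v,\beta^{-1}I_d)$ on $\mathbb R^d$ together with the one-sided truncation $\psi:(-\infty,0]\to[-1,0]$, which is \emph{convex}; Jensen then gives $\E_{\rho_v}\psi(-\lambda\theta^{\top}M_i\theta)\ge\psi(-\lambda v^{\top}M_iv-\lambda\beta^{-1}\tr M_i)$ in one line, so no bilinear form, no Cauchy--Schwarz, and no symmetrization trick are needed; the trace spread $\min\{\lambda^{-1},\beta^{-1}\tr M_i\}$ appears directly and is $v$-free. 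The fourth moment of $\theta^{\top}\Sigma\theta$ under $\rho_v$ is then computed by hand. Your route avoids that explicit moment computation via the factorized Cauchy--Schwarz bound $(\theta_1^{\top}M\theta_2)^2\le(\theta_1^{\top}M\theta_1)(\theta_2^{\top}M\theta_2)$, at the cost of reintroducing the bookkeeping of Lemma~\ref{lem:truncationlemma}.

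However, as written your argument has two concrete gaps. First, the sign of the influence function is wrong. With $f=-\log\bigl(1+\lambda y+\lambda^2 y^2\bigr)$, the first part of Lemma~\ref{lem:almostconvex} gives $\E_\rho f_i\le-\psi(\lambda v^{\top}M_iv)+\min\{1,\lambda^2\E_\rho y_i^2/6\}$, an \emph{upper} bound on $\E_\rho f_i$; but Lemma~\ref{lem:pacbayes} already provides an upper bound on $\frac1n\sum_i\E_\rho f_i$, so nothing can be concluded about $-\frac{1}{n\lambda}\sum_i\psi(\lambda v^{\top}M_iv)$. To make the chain close, one must \emph{lower}-bound $\E_\rho f_i$ by $-\psi(\lambda v^{\top}M_iv)$ minus a spread term, which requires taking $f=\log\bigl(1-\lambda y+c\lambda^2y^2\bigr)$ (or, as in the paper, $f=\psi(-\lambda\,\cdot)$). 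The fact that you also invoke $-\log(1-x+x^2)\ge x-x^2$ and $1/(1+y+y^2)\le1-y+y^2$ together suggests the two choices were conflated; only one of them is compatible with the direction you need.

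Second, the spread term $\min\bigl\{1,c_1\lambda^2(\tr M_i)^2/\mathbf r(\Sigma)^2\bigr\}$ is incomplete. Since $\E_\rho(\theta_1^{\top}M_i\theta_2)^2\le\bigl(v^{\top}M_iv+\mathbf r(\Sigma)^{-1}\tr M_i\bigr)^2$, the spread contains a $v$-dependent piece $\min\{1,c_1\lambda^2(v^{\top}M_iv)^2\}$ that cannot be handed to Bernstein as is. In Lemma~\ref{lem:truncationlemma} this piece is reabsorbed into the $\lambda^2y^2$ coefficient inside the $\log$ via the symmetry-of-the-Gaussian argument (``$\Pr(\langle X,\theta_k\rangle^2\ge\langle X,v\rangle^2)\ge1/2$'') plus the second part of Lemma~\ref{lem:almostconvex}; you must redo this step for $\theta_1^{\top}M\theta_2$ (it does carry over, with constant $1/4$ instead of $1/2^s$, by conditioning on $\theta_2$ and then on $\theta_1$). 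Finally, the requirement $n\ge\mathbf r(\Sigma)+t$ you impose when choosing $\lambda$ is not needed: the one-sided truncation, like the paper's, imposes no upper bound on $\lambda$, and indeed Proposition~\ref{prop:lowertail} has no such hypothesis.
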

\begin{proof}
Set $\Theta = \mathbb{R}^d$. Let the prior distribution $\mu$ be a multivariate Gaussian distribution with mean zero and covariance $\beta^{-1}I_d$. For $v \in S^{d - 1}$ let $\rho_v$ be a multivariate Gaussian distribution with mean $v$ and covariance $\beta^{-1}I_d$. We have 
$
\mathcal{KL}(\rho_v, \mu) = \beta/2.
$

Let $\theta$ be distributed according to $\rho_v$. Given a positive semi-definite matrix $C$, we have
\begin{equation}
\label{eq:wow}
\E_{\rho_v}\theta^{\top}C\theta = v^{\top}Cv + \beta^{-1}\tr(C).
\end{equation}
Let $Z$ be a random vector having a multivariate Gaussian distribution with zero mean and covariance $\beta^{-1}\Sigma$. Define $\varphi = \Sigma^{1/2}v$. We have, using $(a + b)^4 \le 8a^4 + 8b^4$ and the formulas for the moments of the Gaussian distribution,
\begin{align}
\label{eq:forthmom}
\E_{\rho_v}(\theta^{\top} \Sigma \theta)^2 &= \E\left(\|Z + \varphi\|\right)^4 \le 8\left(3\beta^{-2}\sum\limits_{i = 1}^d \Sigma_{ii}^2+2\beta^{-2}\sum\limits_{i < j}(\Sigma_{ii}\Sigma_{jj} + 2\Sigma^2_{ij}) \right) + 8 \|\varphi\|^4. \nonumber
\\
&=8(\beta^{-2}(2\tr(\Sigma^2) + \tr(\Sigma)^2) + \|\Sigma\|^2) \le 24\beta^{-2}\tr(\Sigma)^2 + 8\|\Sigma\|^2.
\end{align}
Consider the negative part of the truncation function. That is, let $\psi: (-\infty, 0] \to [-1, 0]$ be defined as follows:
\begin{equation}
\psi(x) = 
    \begin{cases}
      x,\quad \textrm{for}\; x \in [-1, 0];
      \\
      -1,\quad \textrm{for}\; x < -1.
    \end{cases}
\end{equation}
One can easily check that for $x \le 0$ it holds that
$
x \le \psi(x) \le \log(1 + x + x^2/2).
$
Fix $\lambda > 0$ and consider the function $\psi(-\lambda\theta^{\top} M \theta)$. Observe that $-\lambda\theta^{\top} M \theta \le 0$ almost surely, so that the function is well defined. We want to plug it into Lemma \ref{lem:pacbayes}. Using $\psi(x) \le \log(1 + x + x^2/2)$ for $x \le 0$ and $\log(1 + y) \le y$ for $y > -1$, we  have
\begin{align*}
\E_{\rho_{v}}\log(\E\exp(\psi(-\lambda \theta^{\top} M \theta))) &\le \E_{\rho_{v}}\log\left(\E(1 -\lambda \theta^{\top} M \theta + \lambda^2 (\theta M \theta)^2/2)\right)
\\
&\le -\lambda\E_{\rho_{v}}\E\theta^{\top} M \theta + \lambda^2\E\E_{\rho_{v}} (\theta^{\top} M \theta)^2/2
\\
&\le -\lambda v^{\top} \Sigma v -  \lambda\beta^{-1}\tr(\Sigma) + \lambda^2\kappa^4\E_{\rho_v}(\theta^{\top} \Sigma \theta)^2/2
\\
&\le -\lambda v^{\top} \Sigma v -  \lambda\beta^{-1}\tr(\Sigma) + \lambda^2\kappa^4(12\beta^{-2}\tr(\Sigma)^2 + 4\|\Sigma\|^2),
\end{align*}
where we used the moment equivalence assumption together with \eqref{eq:wow} and \eqref{eq:forthmom}. By Lemma \ref{lem:pacbayes} we have, with probability at least $1 - \exp(-t)$, simultaneously for all $v \in S^{d - 1}$,
\begin{equation}
\label{eq:pacbayesfull}
\frac{1}{n}\E_{\rho_v}\sum\limits_{i = 1}^n\psi\left(-\lambda \theta^{\top} M_i \theta\right) \le  -\lambda v^{\top} \Sigma v -  \lambda\beta^{-1}\tr(\Sigma)  + \lambda^2\kappa^4(12\beta^{-2}\tr(\Sigma)^2 + 4\|\Sigma\|^2) + \frac{\beta/2 + t}{n}.
\end{equation}
Observe that $\psi$ is convex. Thus, by Jensen's inequality and \eqref{eq:wow} we have
\[
\E_{\rho_v}\psi\left(-\lambda \theta^{\top} M_i \theta\right) \ge \psi\left(-\lambda \E_{\rho_v}\theta^{\top} M_i \theta\right) = \psi(-\lambda v^{\top}M_iv - \lambda \beta^{-1}\tr(M_i))).
\]
Since $v^{\top}M_iv \ge 0$ by the definition of $\psi$ and using $\psi(x) \ge x$, we have
\begin{align*}
\psi(-\lambda v^{\top}M_iv - \lambda \beta^{-1}\tr(M_i))) &= \psi(-\lambda v^{\top}M_iv - \min\{\lambda \beta^{-1}\tr(M_i), 1\})
\\
&\ge -\lambda v^{\top}M_iv - \min\{\lambda \beta^{-1}\tr(M_i), 1\}.
\end{align*}
We plug this in \eqref{eq:pacbayesfull}, divide both sides by $\lambda > 0$ and obtain on the corresponding event 
\begin{align*}
\frac{1}{n}\sum\limits_{i = 1}^n(v^{\top}\Sigma v - v^{\top}M_iv) &\le \frac{1}{n}\sum\limits_{i = 1}^n\left(\min\left\{\lambda^{-1}, \beta^{-1}\tr(M_i)\right\} -  \beta^{-1}\tr(\Sigma)\right) 
\\
&\qquad +  \lambda\kappa^4(12\beta^{-2}\tr(\Sigma)^2 + 4\|\Sigma\|^2)  + \frac{\beta/2 + t}{\lambda n}.
\end{align*}
It is left to analyze the sum involving $\tr(M_i)$. Observe that 
\[
\E \min\left\{\lambda^{-1}, \beta^{-1}\tr(M_i)\right\} \le \beta^{-1}\tr(\Sigma) \quad \textrm{and} \quad \E \left(\min\left\{\lambda^{-1}, \beta^{-1}\tr(M_i)\right\}\right)^2 \le \beta^{-2}\kappa^4(\tr(\Sigma))^2, 
\]
where we used, denoting the standard basis in $\mathbb{R}^d$ by $e_1, \ldots, e_d$, 
\[
\sqrt{\E(\tr(M_i))^2} = \sqrt{\E\left(\sum\nolimits_{j = 1}^d e_j^{\top}M_ie_j \right)^2}  \le \sum\limits_{j = 1}^d\sqrt{\E\left(e_j^{\top}M_ie_j\right)^2} \le \kappa^2\tr(\Sigma).
\]
Applying the Bernstein inequality, we have, with probability at least $1 - \exp(-t)$,
\[
\frac{1}{n}\sum\limits_{i = 1}^n\left(\min\left\{\lambda^{-1}, \beta^{-1}\tr(M_i)\right\} -  \beta^{-1}\tr(\Sigma)\right) \le \beta^{-1}\kappa^2\tr(\Sigma)\sqrt{\frac{2t}{n}} + \frac{2t}{3n\lambda}.
\]
By the union bound, with probability at least $1 - 2\exp(-t)$, for any $v \in S^{d-1}$,
\[
\frac{1}{n}\sum\limits_{i = 1}^n(v^{\top}\Sigma v - v^{\top}M_iv) \le \beta^{-1}\kappa^2\tr(\Sigma)\sqrt{\frac{2t}{n}} +  \lambda\kappa^4(12\beta^{-2}\tr(\Sigma)^2 + 4\|\Sigma\|^2) + \frac{\beta}{2\lambda n} + \frac{5t}{3n\lambda}.
\]
Choose $\beta = \frac{10}{3}\mathbf{r}(\Sigma)$ and $\lambda = \sqrt{\frac{5}{3(12(10/3)^{-2} + 4)}}\frac{1}{\kappa^2\|\Sigma\|}\sqrt{\frac{\mathbf{r}(\Sigma) + t}{n}}$.
Simplifying, we conclude the proof.
\end{proof}
\section{Concluding remarks}
\subsection{A comparison with previous results by Catoni and co-authors}
\label{sec:detailedcomparison}

The application of Lemma \ref{lem:pacbayes} is not new in the context of estimation of quadratic forms. The earliest such application traces back to the works of Audibert and Catoni on robust linear least squares regression \cite{audibert2011robust, audibert2010linear}. More recently, it was done in the context of covariance matrix estimation \cite{catoni2016pac, giulini2018robust, catoni2017dimension} and lower tails for the sample covariance matrix \cite{oliveira2016lower, mourtada2019exact}. With a few exceptions discussed below, this technique has not been previously applied to also control the upper tails of the sample covariance matrix. Applications to general multilinear forms were not previously analyzed.

The closest (in terms of proof techniques) to our results are the bounds appearing in \cite{catoni2016pac} and in the follow-up work \cite{giulini2018robust}. In particular, the work of Catoni \cite{catoni2016pac} provides the dimension-dependent analysis of the sample covariance matrix under two assumptions: the equivalence of $L_4$ and $L_2$ marginal norms as well as the exponential moment assumption on the distribution of $\|X\|$ (see their Proposition 2.2 and Proposition 2.3). However, their results fall short in providing the rate of Proposition \ref{prop:insteadofunionbound} (our simplest bound), as in their case the $\sqrt{d/n}$ convergence rate requires at least $n \ge d^5$ due to the additive term denoted there by $\gamma_{+}$. The sub-optimality comes from the step of the analysis at which $\max\limits_{i = 1, \ldots, n}\|X_i\|$ is used to control the moment generating function of the quadratic form. The problem is explicitly pointed out by Catoni \cite[pages 15 and 16]{catoni2016pac}, where the author is asking if it is possible to show that $\gamma_{+}$ scales as $d/n$ at least in some cases. The same limitations are inherited in the dimension-free extension of this analysis appearing in \cite[Proposition 5.1 and Proposition 5.2]{giulini2018robust}: Apart from the additional \emph{energy} parameter $\sigma$ and logarithmic factors, their bound similarly requires the sample size $n$ to be greater than some integer power of $\mathbf{r}(\Sigma)$. To resolve these problems and achieve the optimal guarantees, we introduce a truncated non-isotropic posterior distribution \eqref{eq:density}. This idea also plays a central role in the proof of Proposition \ref{lem:subexponential} for which no non-isotropic analog is known.

Our second key idea is the analysis of the truncated powers of linear forms in Lemma \ref{lem:truncationlemma}. When $s = 2$, the bound of Lemma \ref{lem:truncationlemma} shares some similarities with the uniform bound of Proposition 4.2 in \cite{catoni2018dimension}. However, we work with an explicit truncation applied to powers of linear forms, which allows to combine the bound of Lemma \ref{lem:truncationlemma} with the classical decoupling-chaining argument analyzed in \cite{adamczak2010quantitative}. This plays a central role in the proofs of Theorem \ref{thm:tensortheorem} and Theorem \ref{thm:logconcavetheorem}. 

\subsection{Possible further extensions}
\label{sec:concludingremarks}

In some applications, one is interested in the following generalization of our problem: In the setup of Theorem \ref{thm:maintheorem} let $T$ be a bounded subset of $\mathbb{R}^d$. Assume that we want to provide a high probability upper bound on 
\[
\sup\limits_{v \in T}\left|\frac{1}{n}\sum\limits_{i = 1}^nv^{\top}M_iv - v^{\top}\Sigma v\right|.
\]
Consider the case where $T$ can be approximated by an ellipsoid; that is, assume that there is a symmetric positive definite matrix $\Gamma$ such that $\frac{1}{2}\Gamma^{1/2} S^{d - 1} \subseteq T \subseteq 2\Gamma^{1/2} S^{d - 1}$. In this case, by considering the random matrix $\Gamma^{1/2}M\Gamma^{1/2}$ we can verify that the norm equivalence assumption \eqref{eq:psionellone} holds and the bound of Theorem \ref{thm:maintheorem} is applicable.

Theorem \ref{thm:tensortheorem} allows various extensions that can be achieved using the same approach. For example, with a slight modification of the proof one can write a similar high probability upper bound on 
\[
\frac{1}{n}\sup\limits_{v_1, \ldots, v_{s} \in S^{d - 1}}\sum\limits_{i = 1}^n\left(\prod_{k = 1}^{s}\langle X^k_i, v_k\rangle - \E\prod_{k = 1}^{s}\langle X^k_i, v_k\rangle\right),
\]
where for any given index $i$ the random vectors $X_i^k$ are either sub-Gaussian or log-concave but not necessarily the same for $k = 1, \ldots , s$. Our results can also be written in the same form as the bounds in \cite{guedon2007lp, adamczak2010quantitative, vershynin2011approximating} allowing isometric approximations at any scale $\varepsilon$. To do so, we need to take a smaller value of $\lambda$ in the proof. The restriction on $n$ will be weakened accordingly. 

Another extension is a multilinear version of the lower tail bound of Proposition \ref{prop:lowertail}. When $s$ is even, we have $\langle X, v\rangle^s \ge 0$, so that we can prove the following: With probability at least $1 - \exp(-t)$, it holds that
\[
\sup\limits_{v \in S^{d - 1}}\left(\E\langle X, v\rangle^s - \frac{1}{n}\sum\limits_{i = 1}^n\langle X_i, v\rangle^s\right) \le c_s\eta^s\|\Sigma\|^{s/2}\sqrt{\frac{\mathbf{r}(\Sigma) + t}{n}},
\]
under the only assumption $\left(\E|\langle y, X\rangle|^{2s}\right)^{1/{2s}} \le \eta\sqrt{y^{\top}\Sigma y}$ for all $y \in \mathbb{R}^d$. The proof follows from Lemma \ref{lem:truncationlemma} by the lower tail argument of Theorem \ref{thm:logconcavetheorem}.
This complements the dimension-dependent lower tail bound of Mendelson \cite[Corollary 1.8]{mendelson2021approximating} valid for general $L_s$ norms with $s > 2$.

In our proofs, we used that the indexing set is essentially the product of either unit spheres or ellipsoids in $\mathbb{R}^d$ and the functions we are considering are linear with respect to the elements of the indexing set. Recently, Koltchinskii  \cite{koltchinskii2020asymptotically}  studied the asymptotic properties of smooth functions of the sample covariance operators. It is likely that if the linearity is replaced by a certain smoothness assumption, our techniques are still applicable. 

\paragraph{Acknowledgments.} The author would like to thank Pedro Abdalla, Rados\l aw Adamczak  and Afonso Bandeira for a valuable feedback and
Jaouad Mourtada for many insightful discussions on the topic. The author is funded in part by ETH Foundations of Data Science (ETH-FDS).

\bibliographystyle{abbrv}  
{\footnotesize
\bibliography{mybib}

\begin{thebibliography}{10}

\bibitem{adamczak2015note}
R.~Adamczak.
\newblock A note on the {H}anson-{W}right inequality for random vectors with
  dependencies.
\newblock {\em Electronic Communications in Probability}, 20:1--13, 2015.

\bibitem{adamczak2014short}
R.~Adamczak, R.~Lata{\l}a, A.~E. Litvak, K.~Oleszkiewicz, A.~Pajor, and
  N.~Tomczak-Jaegermann.
\newblock A short proof of {P}aouris' inequality.
\newblock {\em Canadian Mathematical Bulletin}, 57(1):3--8, 2014.

\bibitem{adamczak2010quantitative}
R.~Adamczak, A.~Litvak, A.~Pajor, and N.~Tomczak-Jaegermann.
\newblock Quantitative estimates of the convergence of the empirical covariance
  matrix in log-concave ensembles.
\newblock {\em Journal of the American Mathematical Society}, 23(2):535--561,
  2010.

\bibitem{alesker1995psi}
S.~Alesker.
\newblock $\psi_2$-estimate for the {E}uclidean norm on a convex body in
  isotropic position.
\newblock In {\em Geometric Aspects of Functional Analysis}, pages 1--4.
  Springer, 1995.

\bibitem{audibert2010linear}
J.-Y. Audibert and O.~Catoni.
\newblock Linear regression through {PAC-Bayesian} truncation.
\newblock {\em arXiv preprint arXiv:1010.0072}, 2010.

\bibitem{audibert2011robust}
J.-Y. Audibert and O.~Catoni.
\newblock Robust linear least squares regression.
\newblock {\em The Annals of Statistics}, 39(5):2766--2794, 2011.

\bibitem{bandeira2021matrix}
A.~S. Bandeira, M.~T. Boedihardjo, and R.~van Handel.
\newblock Matrix concentration inequalities and free probability.
\newblock {\em arXiv preprint arXiv:2108.06312}, 2021.

\bibitem{boucheron2013concentration}
S.~Boucheron, G.~Lugosi, and P.~Massart.
\newblock {\em Concentration Inequalities: A Nonasymptotic Theory of
  Independence}.
\newblock Oxford University Press, 2013.

\bibitem{bubeck2020law}
S.~Bubeck, Y.~Li, and D.~Nagaraj.
\newblock A law of robustness for two-layers neural networks.
\newblock {\em Conference on Learning Theory}, 2021.

\bibitem{catoni2012challenging}
O.~Catoni.
\newblock Challenging the empirical mean and empirical variance: a deviation
  study.
\newblock {\em Annales de l'{Institut Henri Poincar{\'e}}, Probabilit{\'e}s et
  Statistiques}, 48(4):1148--1185, 2012.

\bibitem{catoni2016pac}
O.~Catoni.
\newblock {PAC-Bayesian} bounds for the {Gram} matrix and least squares
  regression with a random design.
\newblock {\em arXiv preprint arXiv:1603.05229}, 2016.

\bibitem{catoni2017dimension}
O.~Catoni and I.~Giulini.
\newblock Dimension-free {PAC-Bayesian} bounds for matrices, vectors, and
  linear least squares regression.
\newblock {\em arXiv preprint arXiv:1712.02747}, 2017.

\bibitem{catoni2018dimension}
O.~Catoni and I.~Giulini.
\newblock Dimension-free {PAC-Bayesian} bounds for the estimation of the mean
  of a random vector.
\newblock {\em arXiv preprint arXiv:1802.04308}, 2018.

\bibitem{chinot2019robust}
G.~Chinot, G.~Lecu{\'e}, and M.~Lerasle.
\newblock Robust statistical learning with {L}ipschitz and convex loss
  functions.
\newblock {\em Probability Theory and Related Fields}, pages 1--44, 2019.

\bibitem{even2021concentration}
M.~Even and L.~Massouli{\'e}.
\newblock Concentration of non-isotropic random tensors with applications to
  learning and empirical risk minimization.
\newblock {\em Conference on Learning Theory}, 2021.

\bibitem{giannopoulos2000concentration}
A.~A. Giannopoulos and V.~D. Milman.
\newblock Concentration property on probability spaces.
\newblock {\em Advances in Mathematics}, 156(1):77--106, 2000.

\bibitem{giulini2018robust}
I.~Giulini.
\newblock Robust dimension-free {G}ram operator estimates.
\newblock {\em Bernoulli}, 24(4B):3864--3923, 2018.

\bibitem{gotze2021concentration}
F.~G{\"o}tze, H.~Sambale, and A.~Sinulis.
\newblock Concentration inequalities for polynomials in
  $\alpha$-sub-exponential random variables.
\newblock {\em Electronic Journal of Probability}, 26:1--22, 2021.

\bibitem{guedon2007lp}
O.~Gu{\'e}don and M.~Rudelson.
\newblock ${L}_p$-moments of random vectors via majorizing measures.
\newblock {\em Advances in Mathematics}, 208(2):798--823, 2007.

\bibitem{hsu2012tail}
D.~Hsu, S.~Kakade, and T.~Zhang.
\newblock A tail inequality for quadratic forms of subgaussian random vectors.
\newblock {\em Electronic Communications in Probability}, 17:1--6, 2012.

\bibitem{jeong2020sub}
H.~Jeong, X.~Li, Y.~Plan, and {\"O}.~Y{\i}lmaz.
\newblock Sub-{G}aussian matrices on sets: {O}ptimal tail dependence and
  applications.
\newblock {\em Communications on Pure and Applied Mathematics (forthcoming)},
  2021.

\bibitem{klochkov2020robust}
Y.~Klochkov, A.~Kroshnin, and N.~Zhivotovskiy.
\newblock Robust $ k $-means clustering for distributions with two moments.
\newblock {\em The Annals of Statistics (forthcoming)}, 2020.

\bibitem{klochkov2020uniform}
Y.~Klochkov and N.~Zhivotovskiy.
\newblock Uniform {H}anson-{W}right type concentration inequalities for
  unbounded entries via the entropy method.
\newblock {\em Electronic Journal of Probability}, 25, 2020.

\bibitem{koltchinskii2020asymptotically}
V.~Koltchinskii.
\newblock Asymptotically efficient estimation of smooth functionals of
  covariance operators.
\newblock {\em Journal of the European Mathematical Society}, 23(3):765--843,
  2020.

\bibitem{koltchinskii2017operators}
V.~Koltchinskii and K.~Lounici.
\newblock Concentration inequalities and moment bounds for sample covariance
  operators.
\newblock {\em Bernoulli}, 23(1):110--133, 2017.

\bibitem{koltchinskii2015bounding}
V.~Koltchinskii and S.~Mendelson.
\newblock Bounding the smallest singular value of a random matrix without
  concentration.
\newblock {\em International Mathematics Research Notices},
  2015(23):12991--13008, 2015.

\bibitem{latala2017some}
R.~Lata{\l}a.
\newblock On some problems concerning log-concave random vectors.
\newblock In {\em Convexity and Concentration}, pages 525--539. Springer, 2017.

\bibitem{latala2020hadamard}
R.~Lata{\l}a and P.~Nayar.
\newblock Hadamard products and moments of random vectors.
\newblock {\em Advances in Mathematics}, 375:107--1 14, 2020.

\bibitem{Ledoux2001}
M.~Ledoux.
\newblock {\em The Concentration of Measure Phenomenon}, volume~89 of {\em
  Mathematical surveys and Monographs}.
\newblock American Mathematical Society, 2001.

\bibitem{liaw2017simple}
C.~Liaw, A.~Mehrabian, Y.~Plan, and R.~Vershynin.
\newblock A simple tool for bounding the deviation of random matrices on
  geometric sets.
\newblock In {\em Geometric Aspects of Functional Analysis}, pages 277--299.
  Springer, 2017.

\bibitem{lopes2019bootstrapping}
M.~E. Lopes, N.~B. Erichson, and M.~W. Mahoney.
\newblock Bootstrapping the operator norm in high dimensions: Error estimation
  for covariance matrices and sketching.
\newblock {\em arXiv preprint arXiv:1909.06120}, 2019.

\bibitem{lugosi2019mean}
G.~Lugosi and S.~Mendelson.
\newblock Mean estimation and regression under heavy-tailed distributions: A
  survey.
\newblock {\em Foundations of Computational Mathematics}, 19(5):1145--1190,
  2019.

\bibitem{lugosi2019sub}
G.~Lugosi and S.~Mendelson.
\newblock Sub-{G}aussian estimators of the mean of a random vector.
\newblock {\em The Annals of Statistics}, 47(2):783--794, 2019.

\bibitem{mendelson2008weakly}
S.~Mendelson.
\newblock On weakly bounded empirical processes.
\newblock {\em Mathematische Annalen}, 340(2):293--314, 2008.

\bibitem{mendelson2021approximating}
S.~Mendelson.
\newblock Approximating ${L}_p$ unit balls via random sampling.
\newblock {\em Advances in Mathematics}, 386:1--20, 2021.

\bibitem{mendelson2020robust}
S.~Mendelson and N.~Zhivotovskiy.
\newblock Robust covariance estimation under ${L}_4-{L}_2$ norm equivalence.
\newblock {\em The Annals of Statistics}, 48(3):1648--1664, 2020.

\bibitem{minsker2015geometric}
S.~Minsker.
\newblock Geometric median and robust estimation in {B}anach spaces.
\newblock {\em Bernoulli}, 21(4):2308--2335, 2015.

\bibitem{mourtada2019exact}
J.~Mourtada.
\newblock Exact minimax risk for linear least squares, and the lower tail of
  sample covariance matrices.
\newblock {\em Annals of Statistics (forthcoming)}, 2022.

\bibitem{nemirovski2004interior}
A.~Nemirovski.
\newblock Interior point polynomial time methods in convex programming.
\newblock {\em Lecture notes}, 42(16):3215--3224, 2004.

\bibitem{oliveira2016lower}
R.~Oliveira.
\newblock The lower tail of random quadratic forms with applications to
  ordinary least squares.
\newblock {\em Probability Theory and Related Fields}, 166(3-4):1175--1194,
  2016.

\bibitem{ostrovskii2019affine}
D.~M. Ostrovskii and A.~Rudi.
\newblock Affine invariant covariance estimation for heavy-tailed
  distributions.
\newblock In {\em Conference on Learning Theory}, pages 2531--2550. PMLR, 2019.

\bibitem{paouris2006concentration}
G.~Paouris.
\newblock Concentration of mass on convex bodies.
\newblock {\em Geometric \& Functional Analysis}, 16(5):1021--1049, 2006.

\bibitem{sambale2020some}
H.~Sambale.
\newblock Some notes on concentration for $\alpha$-subexponential random
  variables.
\newblock {\em arXiv preprint arXiv:2002.10761}, 2020.

\bibitem{srivastava2013covariance}
N.~Srivastava and R.~Vershynin.
\newblock Covariance estimation for distributions with $2+\varepsilon$ moments.
\newblock {\em The Annals of Probability}, 41(5):3081--3111, 2013.

\bibitem{Talagrand2014}
M.~Talagrand.
\newblock {\em Upper and Lower Bounds for Stochastic Processes: Modern Methods
  and Classical Problems}, volume~60.
\newblock Springer Science \& Business Media, 2014.

\bibitem{tikhomirov2016smallest}
K.~E. Tikhomirov.
\newblock The smallest singular value of random rectangular matrices with no
  moment assumptions on entries.
\newblock {\em Israel Journal of Mathematics}, 212(1):289--314, 2016.

\bibitem{tropp2012user}
J.~A. Tropp.
\newblock User-friendly tail bounds for sums of random matrices.
\newblock {\em Foundations of Computational Mathematics}, 12(4):389--434, 2012.

\bibitem{van2017structured}
R.~Van~Handel.
\newblock Structured random matrices.
\newblock In {\em Convexity and Concentration}, pages 107--156. Springer, 2017.

\bibitem{vershynin2011approximating}
R.~Vershynin.
\newblock Approximating the moments of marginals of high-dimensional
  distributions.
\newblock {\em The Annals of Probability}, 39(4):1591--1606, 2011.

\bibitem{Vershynin2016HDP}
R.~Vershynin.
\newblock {\em High-Dimensional Probability: An Introduction with
  Applications}, volume~47 of {\em Cambridge Series in Statistical and
  Probabilistic Mathematics}.
\newblock Cambridge University Press, 2016.

\bibitem{vershynin2020concentration}
R.~Vershynin.
\newblock Concentration inequalities for random tensors.
\newblock {\em Bernoulli}, 26(4):3139--3162, 2020.

\bibitem{yaskov2014lower}
P.~Yaskov.
\newblock Lower bounds on the smallest eigenvalue of a sample covariance
  matrix.
\newblock {\em Electronic Communications in Probability}, 19, 2014.

\bibitem{youssef2013estimating}
P.~Youssef.
\newblock Estimating the covariance of random matrices.
\newblock {\em Electronic Journal of Probability}, 18:1--26, 2013.

\end{thebibliography}
}
\end{document}